\theoremstyle{plain}
\newtheorem{theorem}{Theorem}[section]
\newtheorem*{theorem*}{Theorem}
\newtheorem{prop}[theorem]{Proposition}
\newtheorem{cor}[theorem]{Corollary}
\newtheorem{question}[theorem]{Question}
\newtheorem{theoremintro}{Theorem}
\theoremstyle{definition}
\newtheorem{defin}[theorem]{Definition}
\newtheorem{example}[theorem]{Example}
\theoremstyle{remark}
\newtheorem{remark}[theorem]{Remark}
\newcommand{\sheaf}[1]{\mathscr{#1}}
\newcommand{\LL}{\sheaf{L}}
\newcommand{\OO}{\sheaf{O}}
\newcommand{\MM}{\sheaf{M}}
\newcommand{\EE}{\sheaf{E}}
\newcommand{\FF}{\sheaf{F}}
\newcommand{\HH}{\sheaf{H}}
\newcommand{\NN}{\sheaf{N}}
\renewcommand{\AA}{\sheaf{A}}
\newcommand{\BB}{\sheaf{B}}
\newcommand{\PP}{\sheaf{P}}
\DeclareMathOperator{\HHom}{\sheaf{H}\!\mathit{om}}
\DeclareMathOperator{\Hom}{\mathrm{Hom}}
\DeclareMathOperator{\EEnd}{\sheaf{E}\!\mathit{nd}}
\DeclareMathOperator{\MMat}{\sheaf{M}}
\DeclareMathOperator{\Pic}{\mathrm{Pic}}
\DeclareMathOperator{\SSpec}{\mathbf{Spec}}
\newcommand{\Group}[1]{\mathbf{#1}}
\newcommand{\GL}{\Group{GL}}
\newcommand{\PGL}{\Group{PGL}}
\newcommand{\Orth}{\Group{O}}
\newcommand{\SOrth}{\Group{SO}}
\newcommand{\GOrth}{\Group{GO}}
\newcommand{\muu}{\bm{\mu}}
\DeclareMathOperator{\AAut}{\Group{Aut}}
\newcommand{\Dynkin}[1]{\mathsf{#1}}
\newcommand{\Z}{\mathbb Z}
\newcommand{\C}{\mathbb C}
\newcommand{\Gm}{\Group{G}_{\mathrm{m}}}
\newcommand{\inv}{^{-1}}
\newcommand{\dual}{^{\vee}}
\newcommand{\Ldual}{^{\vee\!\LL}}
\newcommand{\pullback}{^{*}}
\newcommand{\op}{^{\mathrm{op}}}
\newcommand{\exterior}{{\textstyle \bigwedge}}
\newcommand{\tensor}{\otimes}
\newcommand{\adj}[1]{\psi_{#1}}
\newcommand{\can}{\mathrm{can}}
\newcommand{\id}{\mathrm{id}}
\newcommand{\vp}{\varphi}
\newcommand{\et}{\mathrm{\acute{e}t}}
\newcommand{\alg}{\text{-}\mathrm{alg}}
\newcommand{\fppf}{\mathrm{fppf}}
\newcommand{\isom}{\cong}
\newcommand{\Het}{H_{\et}}
\newcommand{\Hfppf}{H_{\fppf}}
\newcommand{\discs}{d}
\DeclareMathOperator{\im}{\mathrm{im}}
\newcommand{\linedef}[1]{\textsl{#1}}
\newcommand{\CliffAlg}{\sheaf{C}}
\newcommand{\CliffZ}{\sheaf{Z}}
\renewcommand{\Gm}{\Group{G}_{\mathrm{m}}}
\newcommand{\Pf}{\mathscr{P}\!\mathit{f}}
\newcommand{\pf}{\mathrm{pf}}
\newcommand{\tot}{\mathrm{tot}}
\newcommand{\Wtot}{W_{\tot}}
\newcommand{\Itot}{I_{\tot}}
\newcommand{\Br}{\mathrm{Br}}
\newcommand{\Brtwo}{{}_2\Br}
\newcommand{\cd}{\mathit{cd}}
\newcommand{\JJ}{\sheaf{J}}
\newcommand{\ind}{\mathrm{index}}
\newcommand{\per}{\mathrm{period}}
\newcommand{\PQF}{\mathrm{PQF}}
\newcommand{\Aztwo}{{}_2\mathrm{Az}}
\newcommand{\Nrd}{\mathrm{Nrd}}
\begin{document}

\vspace*{-0.1cm}

\title[Surjectivity of the total Clifford invariant]{Surjectivity of the
total Clifford invariant\\ and Brauer dimension} 

\author{Asher Auel}
\address{Department of Mathematics\\%
Yale University\\%
10 Hillhouse Ave.\\%
New Haven, CT 06511}
\email{asher.auel@yale.edu}

\subjclass[2010]{Primary 11E81, 11E88, 14F22; Secondary 
19G12, 20G35.}
\keywords{Quadratic forms, line bundle-valued quadratic forms, Clifford algebras, Brauer groups, Brauer dimension,
classical invariants, cohomological invariants.}

\begin{abstract}
A celebrated theorem of Merkurjev---that the 2-torsion of the Brauer
group is represented by Clifford algebras of quadratic forms---is in
general false when the base is no longer a field.  The first
counterexamples, when the base is among certain arithmetically subtle
hyperelliptic curves over local fields, were constructed by Parimala,
Scharlau, and Sridharan.  We prove that considering Clifford algebras
of all line bundle-valued quadratic forms, such counterexamples
disappear and we recover Merkurjev's theorem in these cases:\ for any
smooth curve over a local field or any smooth surface over a finite
field, the 2-torsion of the Brauer group is always represented by Clifford algebras of line bundle-valued quadratic forms.
\end{abstract}

\maketitle

\section*{Introduction}

A consequence of Merkurjev's celebrated result
\cite{merkurjev:degree_2}---settling the degree 2 case of the Milnor
conjecture---is that every 2-torsion Brauer class over a field of
characteristic $\neq 2$ is represented by the Clifford algebra of a
quadratic form.  There are many alternate proofs of Merkurjev's
theorem~\cite{arason:proof_Merkurjev_theorem},
\cite{merkurjev:another_proof_norm_2},
\cite{wadsworth:proof_Merkurjev_theorem},
\cite[VIII]{elman_karpenko_merkurjev}, and it retains its status as
one of the great breakthroughs in the theory of quadratic forms in the
second half of the 20th century.

There have been many investigations into the validity of aspects of
the Milnor conjecture over more general rings.  For example, see
\cite[\S3]{guin:homologie_GL_Milnor_K-theory},
\cite{elbaz-vincent_muller-stach},
\cite{hoobler:Merkurjev-Suslin_semilocal_ring}, and
\cite{kerz:Milnor-Chow_homomorphism},
\cite{kerz:Gersten_conjecture_Milnor_K-theory} for a Milnor
$K$-theoretic perspective, \cite{parimala_sridharan:graded_Witt}, 
\cite{morel:proof_milnor}, and \cite{gille:graded_Gersten-Witt} for a
Witt group perspective, and \cite{auel:kyoto} for a survey of results.
In this context, Alex Hahn asked if there exists a commutative ring
$R$ over which the analogue of Merkurjev's theorem doesn't hold, i.e.,
$\Brtwo(R)$ is not represented by Clifford algebras of regular
quadratic forms over $R$. 
The surprising results of Parimala, Scharlau, and Sridharan
\cite{parimala_scharlau:extension},
\cite{parimala_sridharan:graded_Witt},
\cite{parimala_sridharan:nonsurjectivity} show that for a smooth
complete hyperelliptic curve $X$ with a rational point over a local
field of characteristic $\neq 2$, the analogue of Merkurjev's theorem
over $X$ holds if and only if $X$ has a rational theta characteristic
(which can fail to happen). 
These examples are also used to construct affine schemes over which
Merkurjev's theorem does not hold, thus answering Hahn's original
question.

In this work we show that even when Brauer classes of period 2 over a
given scheme $X$ cannot be represented by Clifford algebras of regular
quadratic forms over $X$, they may be represented by Clifford algebras
of regular line bundle-valued quadratic forms. Let $\Wtot(X)$ be the
total Witt group of line bundle-valued quadratic forms (see
\S\ref{subsec:total} for definitions) and let $\Itot^2(X)$ be the
subgroup of line bundle-valued quadratic forms of even rank and
trivial discriminant. We construct (in \S\ref{subsec:total_Clifford})
a natural group homomorphism $e^2 : \Itot^2(X) \to \Brtwo(X)$ with
values in the 2-torsion of the Brauer group of $X$, generalizing the
classical Clifford invariant, and which we call the \linedef{total
Clifford invariant}. A succinct consequence of our main result is the following.

\begin{theoremintro}
\label{thm:A}
Let $X$ be a smooth curve over a local field of characteristic $\neq
2$ or a smooth surface over a finite field of odd characteristic. Then the
total Clifford invariant
$$
e^2 : \Itot^2(X) \to \Brtwo(X)
$$
is surjective.  In other words, every 2-torsion Brauer class on $X$ is
represented by the Clifford algebra of a regular line bundle-valued
quadratic form on $X$.
\end{theoremintro}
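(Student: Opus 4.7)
My approach would combine a scheme-theoretic version of Albert's classical theorem---realizing every degree-$4$ Azumaya algebra with an orthogonal involution of hyperbolic type as the even Clifford algebra of a rank-$6$ line bundle-valued quadratic form of trivial discriminant---with period--index (``Brauer dimension'') bounds for the arithmetic families of $X$ in the statement.  First I would record that for any rank-$6$ line bundle-valued form $(q,L)$ of trivial discriminant, the even Clifford algebra $C_0(q)$ is an Azumaya $\OO_X$-algebra of degree $4$ carrying a canonical involution, and that $e^2(q)=[C_0(q)]\in\Brtwo(X)$.  The key algebraic input is then the converse: a scheme-theoretic generalization of Albert's theorem (in the spirit of Knus--Parimala--Sridharan) asserting that every degree-$4$ Azumaya pair $(\AA,\sigma)$ on $X$ with orthogonal involution of hyperbolic type arises as $(C_0(q),\tau)$ for some such $(q,L)$.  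Granting this, the surjectivity of $e^2$ is reduced to representing each $\alpha\in\Brtwo(X)$ by a degree-$4$ Azumaya algebra on $X$ equipped with such an involution.

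The arithmetic step is then the period--index bound $\ind(\alpha_{K(X)})\mid 4$, which is precisely what singles out the two families of schemes in the theorem.  For a smooth curve over a local field of characteristic $\neq 2$, I would invoke Saltman's period--index theorem (as refined by Parimala--Suresh and by Harbater--Hartmann--Krashen), giving Brauer dimension at most $2$; for a smooth surface over a finite field of odd characteristic, I would use de Jong's theorem on $\per=\ind$ for function fields of surfaces, together with the vanishing of $\Br(\F_q)$, which in fact yields the stronger bound $\ind\mid 2$.  In either case $\alpha_{K(X)}$ is represented by a degree-$4$ central simple $K(X)$-algebra $A_\eta$; Merkurjev's theorem applied to the field $K(X)$ then allows $A_\eta$ to be chosen to carry an orthogonal involution of the first kind.

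The remaining task is to promote the generic pair $(A_\eta,\sigma_\eta)$ to an integral Azumaya algebra with involution $(\AA,\sigma)$ of degree $4$ on all of $X$ still of Brauer class $\alpha$.  Since $X$ is regular and $\alpha\in\Brtwo(X)$ is by hypothesis already defined on $X$, purity of the Brauer group for regular schemes (Auslander--Goldman--Gabber) will provide a degree-$4$ Azumaya $\OO_X$-algebra $\AA$ extending $A_\eta$; the involution $\sigma_\eta$ is then extended by a Zariski-local descent, handled via the regularity of the codimension-$1$ local rings (discrete valuation rings) of $X$ and a standard lifting of involutions.  I expect the main obstacle to be exactly this integral descent, together with the precise form of the scheme-theoretic Albert theorem, because the resulting involution must be \emph{hyperbolic} (not merely orthogonal of the first kind) in order that Albert's theorem actually produce a line bundle-valued form rather than only an Azumaya algebra.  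The arithmetic input of the second paragraph is the only purely arithmetic ingredient of the argument, and accounts precisely for the two restricted classes of schemes in the theorem.
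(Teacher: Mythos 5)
Your overall architecture (bound the generic index by $4$, extend the generic algebra to a degree-$4$ Azumaya algebra on $X$, then realize it as an even Clifford algebra of a rank-$6$ line bundle-valued form) is the paper's, but three of your key steps have genuine gaps. Most seriously, your ``scheme-theoretic Albert theorem'' is stated for degree-$4$ algebras with an \emph{orthogonal involution of hyperbolic type}. A hyperbolic involution on a degree-$4$ algebra requires an isotropic right ideal of reduced dimension $2$, hence forces the algebra to have index dividing $2$; a division biquaternion algebra admits no hyperbolic involution at all. Since period-$2$ classes of generic index $4$ do occur over function fields of $p$-adic curves (and are exactly the cases that make Theorem~A nontrivial), your key lemma cannot reach them. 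The correct input, as in Knus--Parimala--Sridharan and Bichsel--Knus, is the reduced pfaffian construction, which needs no involution whatsoever: any Azumaya algebra $\AA$ of degree $4$ and period $2$, via a choice of isomorphism $\AA\tensor\AA\isom\EEnd(\PP)$, yields a rank-$6$ form of trivial discriminant with $\CliffAlg_0 \isom \AA\op\times\EEnd_{\AA\op}(\PP)$ (Theorem~\ref{thm:low_dim}). Relatedly, even producing an involution of the first kind on the Azumaya algebra itself over $X$ (rather than on a Brauer-equivalent one of larger degree) is not guaranteed by period $2$ over a base scheme, so your plan to extend $\sigma_\eta$ across $X$ is an additional unaddressed obstruction rather than a routine descent.

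Second, the integral extension step is misattributed: purity/injectivity for the Brauer group (Auslander--Goldman, Gabber) only controls the Brauer \emph{class}; it does not produce an Azumaya $\OO_X$-algebra of degree $4$ with the prescribed generic fiber. What is needed is purity for division algebras of degree dividing $4$ (extension of Azumaya maximal orders), which in dimension $\leq 2$ follows from Colliot-Th\'el\`ene--Sansuc applied to $\GL_4/\muu_2$ (the paper's Condition \textit{A} and Theorem~\ref{thm:purity}) and which genuinely fails in higher dimension (Antieau--Williams). Third, your arithmetic input for surfaces over finite fields is wrong as stated: de~Jong's theorem concerns surfaces over \emph{algebraically closed} fields, and the vanishing of $\Br(\F_q)$ does not allow you to descend an index bound from $\ol{\F}_q$; the claimed bound $\ind\mid 2$ is not available, and the correct input is Lieblich's bound $\ind \mid \per^2$ for fields of transcendence degree $2$ over finite fields (also, the existence of involutions of the first kind on exponent-$2$ algebras over a field is Albert's theorem, not Merkurjev's). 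With these repairs your argument collapses onto the paper's proof; as proposed, it does not go through.
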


In the proof (see \S\ref{sec:surjectivity}), we apply results of
Saltman~\cite{saltman:division_algebra_p-adic_curves} and
Lieblich~\cite{lieblich:transcendence_2} on the Brauer dimension of
function fields of curves over local fields and surfaces over finite
fields, respectively.  Together with a purity result for division
algebras on surfaces (Theorem~\ref{thm:purity}), we reduce the problem
to one concerning Azumaya algebras of degree dividing 4 and index
dividing 2. Then we generalize results of Knus, Ojanguren, Parimala,
Paques, and
Sridharan~\cite{knus_ojanguren_sridharan:quadratic_azumaya},
\cite{knus_paques:rank_4}, \cite{knus:pfaffians_and_quadratic_forms},
\cite{knus_parimala_sridharan:rank_4},
\cite{knus_parimala_sridharan:rank_6_via_pfaffians}, and
\cite{bichsel_knus:values_line_bundles} (also see~\cite[IV~\S
15]{book_of_involutions}), on the exceptional isomorphisms of Dynkin
diagrams $\Dynkin{A}_1^2 = \Dynkin{D}_2$ and $\Dynkin{A}_3 =
\Dynkin{D}_3$, which provide beautiful constructions of line
bundle-valued quadratic forms with specified even Clifford algebras.
In fact, our main result (Theorem~\ref{prop:main2}) applies to any
regular integral scheme $X$ satisfying purity and Brauer dimension
bounded by 2 for algebras of period 2 over the function field.

The verification that the total Clifford invariant is well defined is
no small task, and occupies the bulk of
\S\ref{sec:line}--\ref{sec:invariants}.  The majority of the work goes
into establishing two fundamental algebraic structural results:\ the
Brauer triviality of the even Clifford algebra of a line bundle-valued
metabolic form (Theorem~\ref{thm:metabolic}), generalizing the main
result of \cite{knus_ojanguren:metabolic}; and a formula to compute
the even Clifford algebras and bimodules of orthogonal sums
(Theorem~\ref{thm:Cliff_perp_formula}) leading to a generalization of
the classical fundamental relation in the Brauer group
(Theorem~\ref{thm:Cliff_perp}).  To this end, we use a new direct
tensorial construction of the even Clifford algebra and bimodule (see
\S\ref{subsec:Even_Clifford_algebra}), which offers novel universal
properties (Propositions~\ref{prop:universal_C0} and
\ref{prop:universal_C1}) useful in establishing these results.  These
structural results for line bundle-valued forms are new and are useful
in a variety of contexts.  In particular, they go beyond the author's
previous cohomological construction \cite{auel:clifford} of
Clifford-type invariants.\\[-11pt]

\noindent{\textbf{History.}} 
The notion of a line bundle-valued quadratic form on $X$ appeared
in many different contexts in the early 1970s.
Geyer--Harder--Knebusch--Scharlau
\cite{geyer_harder_knebusch_scharlau} introduced the notion of
symmetric bilinear forms with values in the module of K\"{a}hler
differentials over a global function field.  This notion enables a
consistent choice of local traces in order to generalize residue
theorems to nonrational function fields.  For a smooth complete
algebraic curve $X$, Mumford~\cite{mumford:theta_characteristics}
introduced the notion of locally free $\OO_X$-modules with pairings
taking values in the sheaf of differentials $\omega_X$ to study theta
characteristics.  Kanzaki~\cite{kanzaki:bilinear_module} introduced
the notion of a Witt group of quadratic forms with values in an
invertible module over a commutative ring.
Saltman~\cite[Thm.~4.2]{saltman:azumaya_algebras_with_involution}
showed that involutions
on Azumaya algebras naturally lead to the consideration of line
bundle-valued bilinear forms.

In his thesis, Bichsel~\cite{bichsel:thesis} (reported in
\cite{bichsel_knus:values_line_bundles}) constructed an even Clifford
algebra of a line bundle-valued quadratic form.  This was later used
in~\cite{parimala_sridharan:norms_and_pfaffians},
\cite{balaji_ternary}, and~\cite{voight:characterizing_quaternion}.
Kapranov~\cite[\S4.1]{kapranov:derived} constructed a
\emph{homogeneous} Clifford algebra of a quadratic form---which in
hindsight is related to the \emph{generalized} Clifford algebra of \cite{bichsel_knus:values_line_bundles} or the \emph{graded}
Clifford algebra of~\cite{caenepeel_van_oystaeyen}---to
study the derived category of projective quadrics and quadric
fibrations.  This was further developed by
Kuznetsov~\cite{kuznetsov:quadrics}.  With respect to Clifford
algebras, line bundle-valued quadratic forms behave much like Azumaya
algebras with orthogonal involutions, which do not enjoy a ``full''
Clifford algebra, only an even part together with a bimodule.  In
particular, line bundle-valued quadratic forms have no Clifford
invariant in the classical sense.  The construction of secondary
invariants in \'etale cohomology capturing the even Clifford algebra
of a line bundle-valued quadratic form with fixed discriminant
appeared in \cite{auel:clifford}.  In the
present work, we develop a purely algebraic Clifford invariant for
line bundle-valued quadratic forms with trivial discriminant, taking
values in the 2-torsion of the Brauer group.

\smallskip

{\small\noindent{\textbf{Acknowledgments.}}
The author would like to thank the directors of the
Max-Planck-Institut f\"ur Mathematik in Bonn for providing excellent
working conditions and a wonderfully stimulating environment where
much of this work was accomplished.  A visit at the Forschungsinstitut
f\"ur Mathematik at ETH Z\"urich also proved to be very fruitful.
The author would also personally like to thank B.\ Calm\`es, T.\
Chinburg, M.\ Knus, R.\ Parimala, D.\ Saltman, and V.\ Suresh for many
useful conversations and much encouragement.  Author partially
supported by National Science Foundation MSPRF grant DMS-0903039 and
an NSA
Young Investigator Grant.
}

\section{Line bundle-valued quadratic forms and even Clifford algebras}
\label{sec:line}

Let $X$ be a separated noetherian scheme.  By a \linedef{vector
bundle}, we mean a locally free $\OO_X$-module of constant finite
rank.  Fix a \linedef{line bundle} $\LL$ on $X$, i.e., an invertible
$\OO_X$-module.

\subsection{Line bundle-valued quadratic forms}

A \linedef{(line bundle-valued) symmetric bilinear form} on $X$ is a
triple $(\EE,b,\LL)$, where $\EE$ is a vector bundle on $X$ and $b :
S^2\EE \to \LL$ is an $\OO_X$-module morphism.  A \linedef{(line
bundle-valued) quadratic form} on $X$ is a triple $(\EE,q,\LL)$, where
$\EE$ is a vector bundle on $X$ and $q : \EE \to \LL$ is an
$\OO_X$-homogeneous morphism of degree two such that the associated
morphism $b_q : S^2\EE \to \LL$ defined on sections by $b_q(vw) =
q(v+w) - q(v) - q(w)$ is a symmetric bilinear form. 
We will mostly dispense with the title ``line bundle-valued.''  The
\linedef{rank} of $(\EE,q,\LL)$ is the rank of $\EE$.

A symmetric bilinear form $(\EE,b,\LL)$ is \linedef{regular} if the
canonical adjoint $\adj{b} : \EE \to \HHom(\EE,\LL)$ is an
isomorphism.  A quadratic form $q$ is \linedef{regular} if $b_q$ is
regular. If 2 is assumed invertible on $X$, then we can pass back and
forth between quadratic and symmetric bilinear forms on $X$.

A \linedef{similarity transformation} between symmetric bilinear forms
$(\EE,b,\LL)$ and $(\EE',b',\LL')$ or quadratic forms $(\EE,q,\LL)$
and $(\EE',q',\LL')$ is a pair $(\vp,\lambda)$ consisting of
$\OO_X$-module isomorphisms $\vp : \EE \to \EE'$ and $\lambda : \LL
\to \LL'$ such that 
$b'(\vp(v), \vp(w)) =
\lambda \circ b(v, w)$ or $q'(\vp(v)) = \lambda \circ q(v)$ on
sections, respectively.  A similarity transformation $(\vp,\lambda)$ is an
\linedef{isometry} if $\LL=\LL'$ and $\lambda$ is the identity map.

Denote by $\GOrth(\EE,q,\LL)$ (resp.\ $\Orth(\EE,q,\LL)$) the
presheaf, on the large fppf site $X_{\fppf}$, of similitudes (resp.\
isometries) of a regular quadratic form $(\EE,q,\LL)$.  In fact, this
is a sheaf and is representable by a smooth affine reductive group
scheme over $X$; see~\cite[II.1.2.6,~III.5.2.3]{demazure_gabriel}).
Here we consider reductive group schemes whose fibers are not
necessarily geometrically integral, in contrast to~\cite[XIX.2]{SGA3}.
In particular, the pointed nonabelian cohomology set
$\Hfppf^1(X,\GOrth(\EE,q,\LL))$ is in bijection with the similarity
classes of regular line bundle-valued quadratic forms with the same
rank as $(\EE,q,\LL)$; see \cite[Prop.~1.2]{auel:clifford}.  If $n$ is even
or 2 is invertible on $X$, then the fppf site can be replaced by the
\'etale site.

Define the \linedef{projective similarity} class of a quadratic form
$(\EE,q,\LL)$ to be the set of similarity classes of quadratic forms
$(\NN\tensor\EE,q_{\NN}\tensor q,\NN^{\tensor 2}\tensor\LL)$ ranging
over all regular bilinear forms $(\NN,q_{\NN},\NN^{\tensor 2})$ of
rank 1 on $X$.  In~\cite{balmer_calmes:lax}, this is referred to as a
\linedef{lax-similarity} class.  In their notation, a
\linedef{quadratic alignment} $A = (\NN,\phi)$ between line bundles
$\LL$ and $\LL'$ consists of a line bundle $\NN$ and an $\OO_X$-module
isomorphism $\phi : \NN^{\tensor 2}\tensor \LL \to \LL'$.  A quadratic
alignment induces an equivalence $A^{\circlearrowleft}$ between
categories of $\LL$-valued and $\LL'$-valued quadratic forms (in
particular, an isomorphism $A^{\circlearrowleft} : W(X,\LL) \to
W(X,\LL')$ of Witt groups) defined by $A^{\circlearrowleft}:
(\EE,q,\LL) \mapsto (\NN\tensor\EE, \phi \circ (q_{\NN}\tensor
q),\LL')$, where $q_{\NN}: \NN \to\NN^{\tensor 2}$ is the canonical
squaring form.

\subsection{Even Clifford algebra}
\label{subsec:Even_Clifford_algebra}

In his thesis, Bichsel~\cite{bichsel:thesis} constructs an even
Clifford algebra of a line bundle-valued quadratic form on an affine
scheme.  Alternate constructions are given in
\cite{bichsel_knus:values_line_bundles},
\cite{caenepeel_van_oystaeyen},
and~\cite[{\S4}]{parimala_sridharan:norms_and_pfaffians}, which are
all detailed in \cite[\S1.8]{auel:clifford}.  Inspired by
\cite[II~Lemma~8.1,~\S9]{book_of_involutions}, we now give a direct
tensorial construction.  Let $(\EE,q,\LL)$ be a line bundle-valued
quadratic form on $X$

Define ideals $\JJ_1$ and $\JJ_2$ of the tensor algebra
$T(\EE\tensor\EE\tensor\LL\dual)$ to be locally generated by
\begin{equation}
v\tensor v \tensor f - f(q(v))\cdot 1 \quad \mbox{and} \quad 
u\tensor v \tensor f \tensor v \tensor w \tensor g - 
f(q(v)) \, u \tensor w \tensor g,
\end{equation}
respectively, for sections $u,v,w$ of $\EE$ and $f,g$ of $\LL\dual$.
We define
\begin{equation}
\label{eq:C_0}
\CliffAlg_0(\EE,q,\LL) = T(\EE\tensor\EE\tensor\LL\dual)/(\JJ_1
+ \JJ_2)
\end{equation} 
together with the canonically induced morphism of $\OO_X$-modules
\begin{equation}
\label{eq:C_0_injection}
i : \EE\tensor\EE\tensor\LL\dual \to \CliffAlg_0(\EE,q,\LL),
\end{equation}
which factors through the degree one elements of the tensor algebra.  

Writing the rank as
$n=2m$ or $n=2m+1$, there is a filtration by $\OO_X$-modules
$$
\OO_X = \FF_0 \subset \FF_2 \subset \dotsm \subset \FF_{2m} =\CliffAlg_0(\EE,q,\LL),
$$
where $\FF_{2i}$ is the image of the truncated tensor algebra $T^{\leq
i}(\EE\tensor\EE \tensor \LL\dual)$ in $\CliffAlg_0(\EE,q,\LL)$, for
each $0 \leq i \leq m$.  As
in~\cite[IV~\S1.6]{knus:quadratic_hermitian_forms}, this filtration
has associated graded pieces $\FF_{2i}/\FF_{2(i-1)} \isom
\exterior^{2i} \EE \tensor (\LL\dual)^{\tensor i}$. 
In particular, $\CliffAlg_0(\EE,q,\LL)$ is a locally free
$\OO_X$-algebra of rank $2^{n-1}$.  
By its tensorial construction, the
even Clifford algebra has the following.
 
\begin{prop}[Universal Property of the even Clifford algebra]
\label{prop:universal_C0}
Given an $\OO_X$-algebra $\AA$ and an $\OO_X$-module morphism $j : \EE
\tensor \EE \tensor \LL\dual \to \AA$ such that
$$
j(v\tensor v \tensor f) = f(q(v))\cdot 1 \quad \mbox{and} \quad 
j(u\tensor v \tensor f) \cdot j(v \tensor w \tensor g) = 
f(q(v)) \, j(u \tensor w \tensor g),
$$
then there exists a unique $\OO_X$-algebra homomorphism $\psi :
\CliffAlg_0(\EE,q,\LL) \to \AA$ satisfying $j = \psi \circ i$.  
\end{prop}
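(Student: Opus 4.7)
The plan is a direct application of the universal property of the tensor algebra, followed by verifying that the defining relations of $\CliffAlg_0(\EE,q,\LL)$ pass to the target $\AA$. First, by the universal property of the tensor algebra, the $\OO_X$-module morphism $j : \EE \tensor \EE \tensor \LL\dual \to \AA$ extends uniquely to an $\OO_X$-algebra homomorphism $\tilde{\psi} : T(\EE \tensor \EE \tensor \LL\dual) \to \AA$ restricting to $j$ on degree one. To produce the desired $\psi$, it suffices to show that $\tilde{\psi}$ annihilates the two-sided ideal $\JJ_1 + \JJ_2$, for then it descends uniquely to the quotient $\CliffAlg_0(\EE,q,\LL)$.

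Since the kernel of an algebra homomorphism is automatically a two-sided ideal, it is enough to check vanishing on the local generators of $\JJ_1$ and $\JJ_2$ displayed in \eqref{eq:C_0}. The first hypothesis on $j$ immediately gives $\tilde{\psi}(v \tensor v \tensor f - f(q(v)) \cdot 1) = j(v \tensor v \tensor f) - f(q(v)) \cdot 1 = 0$, handling $\JJ_1$. For $\JJ_2$, the multiplicativity of $\tilde{\psi}$ together with the fact that it agrees with $j$ on degree-one pieces reduces the generator to $j(u \tensor v \tensor f) \cdot j(v \tensor w \tensor g) - f(q(v)) \, j(u \tensor w \tensor g)$, which vanishes by the second hypothesis. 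Thus $\tilde{\psi}$ factors through a unique $\OO_X$-algebra homomorphism $\psi : \CliffAlg_0(\EE,q,\LL) \to \AA$ with $\psi \circ i = j$. Uniqueness is automatic because the image of $i$ generates $\CliffAlg_0(\EE,q,\LL)$ as an $\OO_X$-algebra, so any algebra map extending $j$ is forced.

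The only point to watch is the sheaf-theoretic framing: $\JJ_1$ and $\JJ_2$ are sheaves of two-sided ideals inside the sheafified tensor algebra, specified by local generators. But membership in an ideal sheaf is a local condition, the universal property of the tensor algebra holds degree by degree on sections, and $\tilde{\psi}$ is an algebra morphism of sheaves, so the argument above applies verbatim over any open subset and glues. There is no genuine obstacle here — the statement is a formal consequence of the explicit tensorial presentation \eqref{eq:C_0}, which is precisely the advantage of the direct construction over the earlier approaches cited in \S\ref{subsec:Even_Clifford_algebra}.
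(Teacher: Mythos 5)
Your argument is correct and is exactly what the paper has in mind: the proposition is stated there as an immediate consequence of the tensorial construction \eqref{eq:C_0}, and your proof simply spells out the standard details (extend $j$ to the tensor algebra, check the local generators of $\JJ_1$ and $\JJ_2$ are annihilated, descend, and get uniqueness since the image of $i$ generates). No discrepancy with the paper's approach.
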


A similar universal property for algebras with involution over a field
is stated in \cite[\S3]{mahmoudi:Clifford_algebras}.  The even
Clifford algebra has the following additional properties.

\begin{prop}
\label{prop:properties}
Let $(\EE,q,\LL)$ be a regular quadratic form of rank $n$ on a scheme
$X$.  Write $n=2m$ or $n=2m+1$.
\begin{enumerate}
\item \label{prop:properties.center} If $n$ is odd, $\CliffAlg_0(\EE,q,\LL)$ is a central
$\OO_X$-algebra.  If $n$ is even, the center $\CliffZ(\EE,q,\LL)$
of $\CliffAlg_0(\EE,q,\LL)$ is an \'etale quadratic $\OO_X$-algebra.

\item \label{prop:properties.Azumaya} If $n$ is odd, $\CliffAlg_0(\EE,q,\LL)$ is an Azumaya
$\OO_X$-algebra of degree $2^{m}$.  If $n$ is even,
$\CliffAlg_0(\EE,q,\LL)$ is an Azumaya
$\CliffZ(\EE,q,\LL)$-algebra 
of rank $2^{m-1}$

\item \label{prop:properties.injection} The canonical $\OO_X$-module
morphism $i : \EE \tensor \EE \tensor \LL\dual \to
\CliffAlg_0(\EE,q,\LL)$ is a locally split embedding and there exists a unique
\linedef{canonical involution} $\tau_0 : \CliffAlg_0(\EE,q,\LL) \to \CliffAlg_0(\EE,q,\LL)\op$
satisfying $\tau_0(i(v\tensor w \tensor f)) = i(w \tensor v \tensor f)$
for sections $v,w$ of $\EE$ and $f$ of $\LL\dual$.

\item \label{prop:properties.similarity} Any similarity $(\vp,
\lambda) : (\EE,q,\LL) \to (\EE',q',\LL')$ induces an $\OO_X$-algebra
isomorphism
$$
\CliffAlg_0(\vp, \lambda) : \CliffAlg_0(\EE,q,\LL) \to
\CliffAlg_0(\EE',q',\LL')
$$
satisfying $i(v)\tensor i(w) \tensor f \mapsto i(\vp(v)) \tensor i(\vp(w)) \tensor
f \circ \lambda\inv$ for sections $v,w$ of $\EE$ and $f$ of $\LL\dual$.

\item \label{prop:properties.proj} Any quadratic alignment $A =
(\NN,\phi)$, with $\phi : \NN^{\tensor 2}\tensor \LL \to \LL'$,
induces an $\OO_X$-algebra isomorphism
$$
\CliffAlg_0(A^{\circlearrowleft}) : 
\CliffAlg_0(A^{\circlearrowleft}(\EE,q,\LL))
\to \CliffAlg_0(\EE,q,\LL)
$$
satisfying $i(a\tensor v)\tensor i(b \tensor w) \tensor f \mapsto
i(v) \tensor i(w) \tensor \phi'(a\tensor b\tensor f)$, for sections
$a,b$ of $\NN$, $v,w$ of $\EE$, and $f$ of $\LL'{}\dual$, where
$\phi' : \NN^{\tensor 2} \tensor \LL'{}\dual \to \LL\dual$ is the
isomorphism canonically induced from $\phi$.

\item \label{prop:properties.functorial} For any morphism of schemes
$p : X' \to X$, there is a canonical $\OO_X$-module isomorphism
$$
\CliffAlg_0(p\pullback(\EE,q,\LL)) \to p\pullback\CliffAlg_0(\EE,q,\LL).
$$
\end{enumerate}
\end{prop}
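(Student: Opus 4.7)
The plan is to reduce each claim to a classical fact about the even Clifford algebra of a quadratic form valued in the trivial line bundle. Since $\LL$ is invertible, Zariski-locally on $X$ one may fix an isomorphism $\LL|_U \isom \OO_U$; choosing also a trivialization $\EE|_U \isom \OO_U^n$, the defining relations \eqref{eq:C_0} for $\CliffAlg_0(\EE, q, \LL)$ specialize exactly to the standard defining relations of the even part of the ordinary Clifford algebra of $(\EE|_U, q|_U)$. Matching the universal property of Proposition~\ref{prop:universal_C0} against the analogous universal property of the classical even Clifford algebra then yields a canonical $\OO_U$-algebra isomorphism $\CliffAlg_0(\EE, q, \LL)|_U \isom C_0(\EE|_U, q|_U)$. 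Under this local identification, the rank and filtration claims, and parts (a) and (b), reduce to standard structural results (e.g.\ from Knus--Ojanguren--Sridharan) on Azumaya-ness and the \'etale quadratic center in even rank; the center patches canonically on overlaps because the identifications come from the universal property.

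For (c), I would produce the involution $\tau_0$ by applying Proposition~\ref{prop:universal_C0} to $\AA = \CliffAlg_0(\EE,q,\LL)\op$ and the $\OO_X$-module morphism $j(v \tensor w \tensor f) := i(w \tensor v \tensor f)$. The first relation from Proposition~\ref{prop:universal_C0} holds by symmetry in $v$; the second, taking into account that multiplication in $\AA$ reverses order, reduces to verifying $g(q(v)) \cdot i(w \tensor u \tensor f) = f(q(v)) \cdot i(w \tensor u \tensor g)$, which follows because $g(q(v)) f - f(q(v)) g$ vanishes in the rank-one module $\LL\dual$ (one locally writes one of $f,g$ as an $\OO_X$-multiple of the other). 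Uniqueness of $\tau_0$ is immediate from the universal property, and a second application gives $\tau_0^2 = \id$. The locally split embedding assertion for $i$ is checked after a Zariski-local trivialization, where it becomes a statement about the image of $\EE\tensor\EE$ inside the classical $C_0$.

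Parts (d) and (e) both come from functoriality of the tensorial construction. For (d), the morphism $\vp \tensor \vp \tensor (\lambda\inv)\pullback$ carries $\EE \tensor \EE \tensor \LL\dual$ to $\EE' \tensor \EE' \tensor \LL'{}\dual$, sends generators of $\JJ_1 + \JJ_2$ to their counterparts on the target, and so descends to the claimed $\OO_X$-algebra homomorphism; alternatively one invokes the universal property with $\AA = \CliffAlg_0(\EE', q', \LL')$, and uniqueness gives that a reverse construction using $(\vp\inv, \lambda\inv)$ produces an inverse. For (e), the quadratic alignment $A = (\NN, \phi)$ induces $\NN\tensor\EE \tensor \NN\tensor\EE \tensor \LL'{}\dual \isom \EE\tensor\EE \tensor \phi'(\NN^{\tensor 2}\tensor\LL'{}\dual) = \EE\tensor\EE \tensor\LL\dual$, which again preserves the defining ideals; the claimed algebra isomorphism follows from the universal property.

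Finally, (f) follows because $p\pullback$ commutes with tensor algebras, with ideal generation, and with quotients, so $p\pullback \CliffAlg_0(\EE, q, \LL)$ satisfies the universal property of $\CliffAlg_0(p\pullback(\EE, q, \LL))$ and one concludes by uniqueness. I expect the main technical obstacle to lie in (c), specifically in correctly identifying the image of $i$ as a locally split submodule compatibly with the $\LL\dual$-twisted filtration $\FF_{2i}/\FF_{2(i-1)} \isom \exterior^{2i}\EE \tensor (\LL\dual)^{\tensor i}$; the twist obscures some of the familiar bookkeeping from the untwisted case, but once a clean local identification with the classical even Clifford algebra is established the remainder of the proposition follows essentially formally from the universal property and faithfully flat descent.
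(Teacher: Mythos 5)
Your proposal is correct and follows essentially the same route as the paper's (much terser) proof: parts (a)--(b) by local reduction to the classical even Clifford algebra and its known Azumaya/\'etale-center properties, parts (c)--(e) from the universal property of Proposition~\ref{prop:universal_C0} (your verification of the two relations for $\tau_0$ in $\CliffAlg_0(\EE,q,\LL)\op$, using that $g(q(v))f = f(q(v))g$ in the invertible module $\LL\dual$, is exactly the needed check), and part (f) from compatibility of the tensorial construction with pullback. The only difference is cosmetic: the paper invokes \'etale-localness of the properties in (a)--(b), while you trivialize $\LL$ Zariski-locally before citing the same classical results, which is equally valid.
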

\begin{proof}
Properties \ref{prop:properties.center} and
\ref{prop:properties.Azumaya} are \'etale local
and hence follow from the corresponding properties of the classical
even Clifford algebra (cf.\
\cite[IV~Thm.~2.2.3,~Prop.~3.2.4]{knus:quadratic_hermitian_forms}),
also see \cite[\S3]{bichsel_knus:values_line_bundles}.  Properties
\ref{prop:properties.injection},
\ref{prop:properties.similarity}, and
\ref{prop:properties.proj} are all consequence of
the universal property.  Property
\ref{prop:properties.functorial} is a direct
consequence of the tensorial construction.
\end{proof}

\begin{defin}
Let $(\EE,q,\LL)$ be a quadratic form of even rank on $X$.  We call $f
: Z = \SSpec \CliffZ(\EE,q,\LL) \to X$ the \linedef{discriminant
cover} of $(\EE,q,\LL)$.  If $(\EE,q,\LL)$ is regular, then $f : Z \to
X$ is \'etale quadratic.
\end{defin}

\subsection{Clifford bimodule}
\label{subsec:Clifford_bimodule}

As in the case of central simple algebras with orthogonal involution,
line bundle-valued quadratic forms do not generally enjoy a ``full''
Clifford algebra, of which the even Clifford algebra is the even
degree part.  Inspired by \cite[II~\S9]{book_of_involutions}, we can
directly define the \linedef{Clifford bimodule}
$\CliffAlg_1(\EE,q,\LL)$ of a quadratic form $(\EE,q,\LL)$,
corresponding to the ``odd'' part of the classical Clifford algebra.

The $\OO_X$-module $\EE\tensor T(\EE\tensor\EE\tensor\LL\dual)$ has a
natural right $T(\EE\tensor\EE\tensor\LL\dual)$-module structure
denoted by $\tensor$.  The $\OO_X$-bilinear map $* : (\EE\tensor\EE\tensor\LL\dual) \times
\EE \to \EE\tensor (\EE\tensor\EE\tensor\LL\dual)$ defined
by 
$$
(u \tensor v \tensor f) * w 
=
u \tensor (v \tensor w \tensor f)
$$
for sections $u,v,w$ of $\EE$ and $f$ of $\LL\dual$, induces a left
$T(\EE\tensor\EE\tensor\LL\dual)$-module structure $*$ on $\EE\tensor
T(\EE\tensor\EE\tensor\LL\dual)$, uniquely defined so that it commutes
with the natural right $T(\EE\tensor\EE\tensor\LL\dual)$-module
structure.  
We define
\begin{equation}
\label{eq:C_1}
\CliffAlg_1(\EE,q,\LL) = \EE \tensor T(\EE\tensor \EE \tensor
\LL\dual)/(\EE \tensor \JJ_1 + \JJ_1 * \EE)
\end{equation}
together with the canonically induced morphism of $\OO_X$-modules
\begin{equation}
\label{eq:C_1_injection}
i : \EE \to \CliffAlg_1(\EE,q,\LL),
\end{equation}
which is a locally split embedding.  One immediately checks that $\EE
\tensor \JJ_2 \subset \JJ_1 * \EE$ and $\JJ_2 * \EE \subset \EE
\tensor \JJ_1$, hence $\CliffAlg_1(\EE,q,\LL)$ inherits a
$\CliffAlg_0(\EE,q,\LL)$-bimodule structure.  Denote the right and
left $\CliffAlg_0(\EE,q,\LL)$-module structures by $\cdot$ and $*$, respectively.

Writing the rank as $n=2m$ or $n=2m+1$, there is a filtration
$$
\EE = \FF_1 \subset \FF_3 \subset \dotsm \subset \FF_{2m+1} =\CliffAlg_1(\EE,q,\LL),
$$
where $\FF_{2i+1}$ is the image of the truncation $\EE \tensor T^{\leq
i}(\EE\tensor\EE \tensor \LL\dual)$ in $\CliffAlg_1(\EE,q,\LL)$, for
each $0 \leq i \leq m$.  This filtration has associated graded pieces
$\FF_{2i+1}/\FF_{2i-1} \isom \exterior^{2i+1} \EE \tensor
(\LL\dual)^{\tensor i}$. 
In particular, $\CliffAlg_1(\EE,q,\LL)$ is a locally free
$\OO_X$-module of rank $2^{n-1}$.  By its tensorial construction, the
Clifford bimodule has the following.

\begin{prop}[Universal Property of the Clifford bimodule]
\label{prop:universal_C1}
Given a $\CliffAlg_0(\EE,q,\LL)$-bimodule $\BB$ (with right and
left actions $\cdot$ and $*$) and an
$\OO_X$-module morphism $j : \EE \to \BB$ such that
$$
j(u) \cdot i(v \tensor w \tensor f) = i(u \tensor v \tensor f) * j(w),
$$
for sections $u,v,w$ of $\EE$ and $f$ of $\LL\dual$, there exists a
unique $\CliffAlg_0(\EE,q,\LL)$-bimodule morphism $\psi :
\CliffAlg_1(\EE,q,\LL) \to \BB$ satisfying $j = \psi \circ i$.
\end{prop}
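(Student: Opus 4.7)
The strategy mirrors the proof of Proposition~\ref{prop:universal_C0}: construct $\psi$ by first defining a morphism off the tensor algebra, then descending. Write $T = T(\EE\tensor\EE\tensor\LL\dual)$, let $\pi : T \to \CliffAlg_0(\EE,q,\LL)$ be the canonical surjection, and define an $\OO_X$-module morphism
$$
\tilde{\psi} : \EE \tensor T \to \BB,
\qquad u \tensor t \mapsto j(u) \cdot \pi(t),
$$
using the right $\CliffAlg_0(\EE,q,\LL)$-action $\cdot$ on $\BB$. Tautologically, $\tilde{\psi}$ is right $T$-linear, where $T$ acts on $\BB$ via $\pi$. The existence of $\psi$ reduces to showing that $\tilde{\psi}$ kills $\EE \tensor \JJ_1 + \JJ_1 * \EE$; the compatibility $\psi \circ i = j$ (from evaluation on $w \tensor 1$) and the $\CliffAlg_0(\EE,q,\LL)$-bimodule linearity of $\psi$ are then formal consequences.

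Vanishing of $\tilde{\psi}$ on $\EE \tensor \JJ_1$ is immediate from $\pi(\JJ_1) = 0$. For vanishing on $\JJ_1 * \EE$, the key claim is that $\tilde{\psi}$ is also left $T$-linear, with $T$ acting on $\BB$ via $\pi$ and the left action $*$. Granting this, for $\alpha$ a section of $\JJ_1$ and $w$ a section of $\EE$,
$$
\tilde{\psi}(\alpha * (w \tensor 1)) = \pi(\alpha) * j(w) = 0.
$$
To establish left $T$-linearity, the commutation of the left and right $T$-actions on $\EE \tensor T$ (together with the bimodule axiom on $\BB$) reduces the verification to showing
$$
\tilde{\psi}(\alpha * (w \tensor 1)) = \pi(\alpha) * j(w)
$$
for all sections $\alpha$ of $T$ and $w$ of $\EE$. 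This identity is additive in $\alpha$, trivial on $T^0 = \OO_X$, and a direct application of the hypothesis on generating sections $\alpha = u \tensor v \tensor f$ of $T^1$:
$$
\tilde{\psi}((u \tensor v \tensor f) * (w \tensor 1)) = \tilde{\psi}(u \tensor (v \tensor w \tensor f)) = j(u) \cdot i(v \tensor w \tensor f) = i(u \tensor v \tensor f) * j(w).
$$
An induction on tensor degree, in which the already-established right $T$-linearity allows one to commute right tensor-factors past $\tilde{\psi}$ and reduce to the $T^1$-case on $\EE \tensor 1$, then extends the identity from $T^1$ to all of $T$.

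Uniqueness is clear since $\CliffAlg_1(\EE,q,\LL)$ is generated as a right $\CliffAlg_0(\EE,q,\LL)$-module by $i(\EE)$ (as its defining quotient has $\EE \tensor 1$ as right $T$-generating subsheaf), so a bimodule morphism restricting to $j$ on $i(\EE)$ is determined. The main obstacle is the left $T$-linearity of $\tilde{\psi}$, whose $T^1$-case is precisely the compatibility hypothesized in the statement; the remainder is formal bookkeeping with the two commuting module structures on $\EE \tensor T$ and on $\BB$.
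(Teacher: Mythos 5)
Your proof is correct and is essentially the paper's (implicit) argument: the paper simply asserts the universal property ``by its tensorial construction,'' and your construction of $\tilde{\psi}(u\tensor t)=j(u)\cdot\pi(t)$, the verification that it kills $\EE\tensor\JJ_1$ and $\JJ_1*\EE$ via right $T$-linearity plus the left $T$-linearity whose degree-one case is exactly the hypothesized relation, is precisely the detail-filling of that construction. The uniqueness argument from $i(\EE)$ generating $\CliffAlg_1(\EE,q,\LL)$ as a right $\CliffAlg_0(\EE,q,\LL)$-module is likewise the intended one.
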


The Clifford bimodule has the following additional properties.

\begin{prop}
\label{prop:properties_C_1}
Let $(\EE,q,\LL)$ be a regular quadratic form on a scheme $X$.
\begin{enumerate}  \setlength{\itemsep}{2pt}
\item \label{properties_C_1.invertible} The Clifford bimodule
$\CliffAlg_1(\EE,q,\LL)$ is invertible as a (left or right)
$\CliffAlg_0(\EE,q,\LL)$-module. 

\item \label{properties_C_1.semilinear} If $n$ is even, then the
action of $\CliffZ(\EE,q,\LL)$ on $\CliffAlg_1(\EE,q,\LL)$ satisfies
$x \cdot z = \iota(z) * x$ for sections $z$ of $\CliffZ(\EE,q,\LL)$
and $x$ of $\CliffAlg_1(\EE,q,\LL)$, where $\iota$ is the nontrivial
$\OO_X$-automorphism of $\CliffZ(\EE,q,\LL)$.

\item \label{properties_C_1.m} 
There is a canonical isomorphism
$$
m : \CliffAlg_1(\EE,q,\LL) \otimes_{\CliffAlg_0(\EE,q,\LL)}
\CliffAlg_1(\EE,q,\LL) \to \CliffAlg_0(\EE,q,\LL)\tensor_{\OO_X}\LL 
$$
of $\CliffAlg_0(\EE,q,\LL)$-bimodules satisfying 
$
m(i(v)\tensor i(v)) = 1 \tensor q(v)
$ 
for a section $v$ of $\EE$.

\item \label{properties_C_1.similarity}
Any similarity transformation $(\vp, \lambda) : (\EE,q,\LL) \to
(\EE',q',\LL')$ induces an $\OO_X$-module isomorphism
$$
\CliffAlg_1(\vp, \lambda) : \CliffAlg_1(\EE,q,\LL) \to
\CliffAlg_1(\EE',q',\LL').
$$
that is $\CliffAlg_0(\vp,\lambda)$-semilinear with respect to the
bimodule structure.

\item \label{properties_C_1.alignment} Any quadratic alignment $A = (\NN,\phi)$, with $\phi :
\NN^{\tensor 2}\tensor \LL \to \LL'$, induces an $\OO_X$-module
isomorphism
$$
\CliffAlg_1(A^{\circlearrowleft}) :
\CliffAlg_1(A^{\circlearrowleft}(\EE,q,\LL)) \to \NN \tensor \CliffAlg_1(\EE,q,\LL)
$$
that is $\CliffAlg_0(A^{\circlearrowleft})$-semilinear with respect to
the bimodule structure.

\item \label{properties_C_1.functorial} For any morphism of schemes $p
: X' \to X$, there is a canonical $\OO_X$-module isomorphism
$$
\CliffAlg_1(p\pullback(\EE,q,\LL)) \to p\pullback\CliffAlg_1(\EE,q,\LL) .
$$
\end{enumerate}
\end{prop}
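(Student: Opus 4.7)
The strategy follows closely that of Proposition~\ref{prop:properties}. Property~(\textit{f}) is automatic from the tensorial definition~\eqref{eq:C_1}: tensor products, quotients by explicitly generated sub-$\OO_X$-modules, and duals of line bundles all commute with arbitrary pullback, so $p\pullback \CliffAlg_1(\EE,q,\LL)$ is canonically identified with $\CliffAlg_1(p\pullback (\EE,q,\LL))$. Properties~(\textit{a}) and~(\textit{b}) are étale-local on $X$, so one passes to an étale cover on which $\LL$ admits a trivialization $\LL\isom\OO_X$. Under such a trivialization, $(\EE,q,\LL)$ becomes an ordinary quadratic form and $\CliffAlg_1(\EE,q,\LL)$ is canonically identified with the classical odd Clifford bimodule of $(\EE,q)$, for which invertibility over $\CliffAlg_0(\EE,q)$ and $\iota$-semilinearity of the center's action are standard, cf.\ \cite[IV]{knus:quadratic_hermitian_forms} and~\cite[II~\S\S8--9]{book_of_involutions}.

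Properties~(\textit{d}) and~(\textit{e}) are proved in parallel to Proposition~\ref{prop:properties}(\textit{d}),(\textit{e}). For (\textit{d}), one defines a map $j : \EE \to \CliffAlg_1(\EE',q',\LL')$ on sections by $v \mapsto i(\vp(v))$, and checks the hypothesis of the universal property (Proposition~\ref{prop:universal_C1}) relative to the $\CliffAlg_0(\EE,q,\LL)$-bimodule structure on $\CliffAlg_1(\EE',q',\LL')$ induced via $\CliffAlg_0(\vp,\lambda)$; the defining relation $q'(\vp(v)) = \lambda \circ q(v)$ makes the verification essentially the same one required to produce $\CliffAlg_0(\vp,\lambda)$. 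The universal property yields $\CliffAlg_1(\vp,\lambda)$, which is $\CliffAlg_0(\vp,\lambda)$-semilinear by construction; applying the same procedure to $(\vp\inv,\lambda\inv)$ gives a two-sided inverse. Property~(\textit{e}) is analogous, using the map $v \mapsto 1 \tensor i(v)$ from $A^{\circlearrowleft}(\EE,q,\LL) = \NN\tensor \EE$ to $\NN\tensor\CliffAlg_1(\EE,q,\LL)$ and the identification $\phi'$ described in Proposition~\ref{prop:properties}(\textit{e}).

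The substantive point is~(\textit{c}), the construction of the canonical multiplication $m$. I would build it in two steps via the universal property. First, for each section $v$ of $\EE$, I would define an $\OO_X$-linear map $\mu_v : \EE \to \CliffAlg_0(\EE,q,\LL) \tensor \LL$ by the rule that $w \mapsto \tilde\imath(v\tensor w)$, where $\tilde\imath : \EE\tensor\EE \to \CliffAlg_0(\EE,q,\LL) \tensor \LL$ is the canonical lift of $i$ obtained from the isomorphism $\EE\tensor\EE \isom \HHom(\LL\dual, \EE\tensor\EE\tensor\LL\dual) \isom (\EE\tensor\EE\tensor\LL\dual) \tensor \LL$. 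Compatibility with the hypothesis of Proposition~\ref{prop:universal_C1} for $\CliffAlg_1(\EE,q,\LL)$ is exactly the content of the second family of relations in $\JJ_2$, so one obtains a $\CliffAlg_0(\EE,q,\LL)$-bimodule morphism $\tilde\mu_v : \CliffAlg_1(\EE,q,\LL) \to \CliffAlg_0(\EE,q,\LL) \tensor \LL$. As $v$ varies, the assignment $v \mapsto \tilde\mu_v$ is $\OO_X$-linear in $v$; a second appeal to the universal property, applied to the left slot, promotes this to a bimodule map $m : \CliffAlg_1 \tensor_{\CliffAlg_0} \CliffAlg_1 \to \CliffAlg_0 \tensor \LL$. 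The formula $m(i(v)\tensor i(v)) = 1 \tensor q(v)$ follows from the first relation in $\JJ_1$, and the fact that $m$ is an isomorphism can be checked étale-locally over a trivialization of $\LL$, where it reduces to the classical identity that the product of two odd elements in the $\Z/2$-graded Clifford algebra is even.

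The main obstacle is the bookkeeping for $m$: verifying that the na\"\i vely-defined multiplication descends through \emph{both} of the ideals $\EE\tensor\JJ_1$ and $\JJ_1 * \EE$ (in each slot of the tensor product over $\CliffAlg_0$), and that the two applications of the universal property are mutually compatible. Once this is accomplished, tensorial naturality of the construction gives (\textit{f}) and functoriality of $m$ in similarities and alignments, closing the package of properties.
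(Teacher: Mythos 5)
Parts (\textit{a}), (\textit{b}), (\textit{d}), (\textit{e}), (\textit{f}) of your proposal are fine and essentially coincide with the paper's argument (local reduction to the classical Clifford algebra, the universal property, and the tensorial construction). The gap is in your construction of $m$ in (\textit{c}). You claim that, for a fixed section $v$ of $\EE$, the map $\mu_v : w \mapsto \tilde\imath(v\tensor w)$ satisfies the hypothesis of Proposition~\ref{prop:universal_C1} (``exactly the content of the second family of relations in $\JJ_2$'') and hence extends to a $\CliffAlg_0(\EE,q,\LL)$-\emph{bimodule} morphism $\tilde\mu_v : \CliffAlg_1(\EE,q,\LL) \to \CliffAlg_0(\EE,q,\LL)\tensor\LL$. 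It does not. Working locally with $\LL$ trivialized, the hypothesis of Proposition~\ref{prop:universal_C1} for $j=\mu_v$ reads $(vu)(ab) = (ua)(vb)$ in $\CliffAlg_0$ for all sections $u,a,b$ of $\EE$; taking $u=v$ anisotropic and $a$ orthogonal to $v$ gives $q(v)\,ab$ on the left but $-q(v)\,ab$ on the right, so the condition fails. Equivalently, left multiplication by $v$ on odd elements is right $\CliffAlg_0$-linear but not left $\CliffAlg_0$-linear; and since in $\CliffAlg_1$ one has $i(u)\cdot i(a\tensor b\tensor f) = i(u\tensor a\tensor f)*i(b)$, the displayed identity is forced by any bimodule morphism extending $\mu_v$, so no such $\tilde\mu_v$ exists at all. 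Hence your first appeal to the universal property is unavailable, and the second one (``applied to the left slot'') has nothing to act on.

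The paper's proof repairs exactly this point, and it is the fix you should adopt: define $m_v$ (left multiplication by $v$) only as a morphism of \emph{right} $\CliffAlg_0$-modules, i.e.\ as a section of $\HHom_{\CliffAlg_0}\bigl(\CliffAlg_1(\EE,q,\LL),\CliffAlg_0(\EE,q,\LL)\tensor\LL\bigr)$, the sheaf of right-module homomorphisms, which carries a natural $\CliffAlg_0$-bimodule structure. Proposition~\ref{prop:universal_C1} is then applied to the assignment $v \mapsto m_v$ valued in this Hom sheaf; there the required compatibility $m_u\cdot i(v\tensor w\tensor f) = i(u\tensor v\tensor f)*m_w$ does hold, since both sides send a section $x$ locally to $u\,v\,w\,x$ (twisted by $f$). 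The resulting bimodule morphism $\psi_m$ has $m$ as its adjoint. Your final step is fine: that $m$ is an isomorphism can be checked locally as you say, or, as in the paper, by observing it is a nonzero map of invertible $\CliffAlg_0$-bimodules using (\textit{a}).
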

\begin{proof}
For simplicity, we write $\CliffAlg_0 = \CliffAlg_0(\EE,q,\LL)$ and
$\CliffAlg_1 = \CliffAlg_1(\EE,q,\LL)$.  For
\ref{properties_C_1.invertible}, since $q$ is fiberwise nonzero,
Zariski locally there exists a line subbundle $\NN \subset \EE$ such
that $q|_\NN$ is regular.  Then as in the classical case (see
\cite[IV~Prop.~7.5.2]{knus:quadratic_hermitian_forms}), $\NN$ locally
generates $\CliffAlg_1$ over $\CliffAlg_0$ as a right or left module.

For \ref{properties_C_1.semilinear}, this is a local question and
hence follows from
\cite[IV~Prop.~4.3.1(4)]{knus:quadratic_hermitian_forms}.  For
\ref{properties_C_1.m}, we will define a
$\CliffAlg_0(\EE,q,\LL)$-bimodule morphism $\psi_m :
\CliffAlg_1(\EE,q,\LL) \to \HHom_{\CliffAlg_0}(\CliffAlg_1,\CliffAlg_0
\tensor\LL)$, where $\HHom_{\CliffAlg_0}$ denotes the sheaf of right
$\CliffAlg_0$-module homomorphisms (here $\LL$ is acted trivially on).
Then $m$ will be the $\CliffAlg_0$-bimodule map with adjoint $\psi_m$.
To this end, for each section $v$ of $\EE$, we define a section $m_v$
of $\HHom_{\CliffAlg_0}(\CliffAlg_1,\CliffAlg_0 \tensor\LL) \isom
\HHom_{\CliffAlg_0}(\CliffAlg_1 \tensor \LL\dual
\tensor\LL,\CliffAlg_0 \tensor\LL)$ by applying the universal property
to the map $w \tensor f \tensor l \mapsto i(v \tensor w \tensor f)
\tensor l$ for a section $w$ of $\EE$, $f$ of $\LL\dual$, and $l$ of
$\EE$.  Then applying the universal property to the map 
defined by $v \mapsto m_v$, yields the required $\psi_m$.  Finally,
$m$ is an isomorphism by \ref{properties_C_1.invertible}, since it's a
nontrivial map of invertible $\CliffAlg_0$-bimodules.

Properties \ref{properties_C_1.similarity} and
\ref{properties_C_1.alignment} are consequence of the universal
property (cf.\ \cite[Prop.~2.6]{bichsel:thesis} and
\cite[Lemma~3.3]{bichsel_knus:values_line_bundles}).  Property
\ref{properties_C_1.functorial} is a direct consequence of the
tensorial construction.
 \end{proof}

\subsection{Metabolic forms}
\label{subsec:Metabolic_forms}

A quadratic form $(\EE,q,\LL)$ of rank $n=2m$ on $X$ is
\linedef{metabolic} if there exists a locally direct summand $\FF \to
\EE$ of rank $m$ such that the restriction of $q$ to $\FF$ is zero.
Any choice of such $\PP$ is a \linedef{lagrangian}. The class of
hyperbolic forms is the main example.

\begin{example}
\label{ex:hyperbolic}
For any vector bundle $\PP$ of rank $m$ and any line bundle $\LL$ the
\linedef{($\LL$-valued) hyperbolic quadratic form} $H_{\LL}(\PP)$ has
underlying $\OO_X$-module $\HHom(\PP,\LL)\oplus \PP$ and is given by
$t+v \mapsto t(v)$ on sections.  Here, $\PP$ and $\HHom(\PP,\LL)$ are
lagrangians.

We now proceed to compute the even Clifford algebra and Clifford
bimodule of a hyperbolic form, which will be necessary for us later.
Given an $\OO_X$-module morphism $t : \PP \to \LL$, for each $i \geq
0$ we define
$$
d_t^{(i)} : \exterior^{i+1}\PP \to \exterior^{i}\PP
\tensor \LL
$$ 
inductively by $d_t^{(i)}(v \wedge x) = x \tensor t(v) + x \wedge
d_t^{(i-1)}(x)$ for sections $v$ of $\PP$ and $x$ of $\exterior^i\PP$,
cf.\ \cite[\S2]{auel:euler_four}.  Under the identification
$\exterior^0\PP=\OO_X$, we set $d_t^{(0)}=t$.  Defining
$$
\exterior^{+}_\LL \PP = \bigoplus_{i=0}^{\lfloor m/2 \rfloor}\exterior^{2i} \PP\tensor (\LL\dual)^{\tensor i}, \qquad 
\exterior^{-}_\LL \PP = \bigoplus_{i=0}^{\lfloor (m-1)/2 \rfloor}\exterior^{2i+1} \PP \tensor(\LL\dual)^{\tensor i}.
$$
there are induced $\OO_X$-module morphisms
$$
d_t^+ : \exterior^+_\LL \PP \to \exterior^-_\LL \PP, \qquad
d_t^- : \exterior^-_\LL \PP \to \exterior^+_\LL \PP \tensor\LL.
$$ 
Also, for each global section $v$ of $\PP$, left
wedging defines $\OO_X$-module morphisms
$$
l_v^+ : \exterior^+_\LL \PP \to \exterior^-_\LL \PP, \qquad
l_v^- : \exterior^-_\LL \PP \to \exterior^+_\LL \PP \tensor\LL.
$$
One immediately checks that the maps
\begin{align*}
H_\LL(\PP) \tensor H_\LL(\PP) \tensor \LL\dual & {} \to \EEnd\left(
\exterior^{+}_\LL \PP \right) \times
\EEnd\left( \exterior^{-}_\LL \PP  \right) \\
(t + v) \tensor (s + w) \tensor f & {} \mapsto (\id\tensor
f)(d_t^-\circ d_s^+ + d_t^- \circ l_w^+ + l_v^- \circ d_s^+ + l_v^- \circ l_w^+) \\
 & {} \qquad + ( d_t^+ \tensor f \circ d_s^- +
d_t^+ \tensor f \circ l_w^- + l_v^+ \tensor f \circ d_s^- + l_v^+
\tensor f \circ l_w^-)
\end{align*}
and
\begin{align*}
H_\LL(\PP) & {} \to \HHom\bigl(\exterior^{+}_\LL
\PP, \exterior^{-}_\LL \PP\bigr) \oplus \HHom\bigl(\exterior^{-}_\LL
\PP, \exterior^{+}_\LL \PP\bigr) \tensor \LL \\
t+v  & {} \mapsto (d_t^+ + l_v^+) +
(d_t^- + l_v^-).
\end{align*}
satisfy the universal properties of the even Clifford algebra and
Clifford bimodule, hence induce a canonical $\OO_X$-algebra morphism
$$
\Phi_0 : \CliffAlg_0(H_{\LL}(\PP)) \to
\EEnd\left( \exterior^{+}_\LL \PP \right) \times 
\EEnd\left( \exterior^{-}_\LL \PP  \right) 
$$
and a canonical $\OO_X$-module morphism
$$
\Phi_1 : \CliffAlg_1(H_{\LL}(\PP)) \to \HHom\bigl(\exterior^{+}_\LL
\PP, \exterior^{-}_\LL \PP\bigr) \oplus \HHom\bigl(\exterior^{-}_\LL
\PP, \exterior^{+}_\LL \PP\bigr) \tensor \LL
$$
transporting, via the morphism $\Phi_0$, the $\CliffAlg_0(H_\LL(\PP))$-bimodule
structure to the evident composition
$\EEnd(\exterior_\LL^+\PP)\times\EEnd(\exterior_\LL^-\PP)$-bimodule
structure.  Zariski locally, $\Phi_0$ and $\Phi_1$ agree with the
restriction of the classical isomorphism $\CliffAlg(H_{\OO_X}(\PP))
\isom \EEnd(\exterior\PP)$ (see
\cite[IV~Prop.~2.1.1]{knus:quadratic_hermitian_forms}) to the even and
odd components of the Clifford algebra, hence $\Phi_0$ and $\Phi_1$
are isomorphisms.

We point out that $\CliffZ(H_{\LL}(\PP)) \isom \OO_X \times \OO_X$ is
the split \'etale quadratic algebra.
\end{example}

The formula for the even Clifford algebra of a hyperbolic form given
in Example~\ref{ex:hyperbolic} does not persist to (nonsplit)
metabolic quadratic forms, a phenomenon already apparent when
$\LL=\OO_X$; see \cite{knus_ojanguren:metabolic}.  However, the main
result of this section is that $\CliffAlg_0$ is still a product of
split Azumaya algebras.

\begin{theorem}
\label{thm:metabolic}
Let $(\EE,q,\LL)$ be a metabolic quadratic form of rank $n=2m$ on a
scheme $X$. 
Any choice of lagrangian $\FF \to \EE$ induces a natural choice of
vector bundles $\MM^+$ and $\MM^-$ of rank $2^{m-1}$, an
$\OO_X$-algebra isomorphism
$$
\Phi_0 : \CliffAlg_0(\EE,q,\LL) \isom \EEnd(\MM^+) \times \EEnd(\MM^-),
$$
and an $\OO_X$-module isomorphism
$$
\Phi_1 : \CliffAlg_1(\EE,q,\LL) \isom \HHom(\MM^+,\MM^-) \oplus \HHom(\MM^-,\MM^+)\tensor\LL
$$
transporting, via $\Phi_0$, the $\CliffAlg_0(\EE,q,\LL)$-bimodule
structure to the evident composition $\EEnd(\MM^+) \times
\EEnd(\MM^-)$-bimodule structure.
\end{theorem}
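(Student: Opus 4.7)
My strategy is to reduce to the hyperbolic case Zariski-locally and then globalize the resulting structures by descent. The lagrangian $\FF$ furnishes a short exact sequence $0 \to \FF \to \EE \to \HHom(\FF,\LL) \to 0$ (with right-hand map $v \mapsto b_q(v,-)|_\FF$) that splits Zariski-locally on $X$; on each open $U$ where a splitting is chosen, one has $(\EE,q,\LL)|_U \isom H_\LL(\FF)|_U$, and Example~\ref{ex:hyperbolic} supplies local isomorphisms $\Phi_0^U$ and $\Phi_1^U$ with local models $\MM_U^\pm \coloneqq \exterior^\pm_\LL \FF|_U$ of rank $2^{m-1}$.

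Before descending, I would verify that $\CliffZ(\EE,q,\LL) \isom \OO_X \times \OO_X$ globally---i.e., that the discriminant cover is trivial---so that $\CliffAlg_0(\EE,q,\LL)$ splits as a product $\CliffAlg_0^+ \times \CliffAlg_0^-$ of Azumaya $\OO_X$-algebras of degree $2^{m-1}$. The global lagrangian $\FF$ provides this: the local hyperbolic idempotents of $\CliffZ|_U$ from Example~\ref{ex:hyperbolic} are intrinsic to $\CliffZ$ and are preserved by the transvections relating different local splittings (which, being connected to the identity, act trivially on the center), so they patch to a global pair of primitive idempotents.

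For descent, two local splittings of the extension differ by a transvection of $H_\LL(\FF)$ fixing $\FF$ pointwise, parametrized by an antisymmetric $\phi \in \exterior^2\FF \tensor \LL\dual$. Under the local Clifford identification, this transvection corresponds to conjugation by $\exp(\phi) \in \CliffAlg_0^\times$, i.e.\ an inner automorphism of each factor $\EEnd(\MM_U^\pm)$. The resulting \v{C}ech $1$-cocycle in $\PGL(\MM_U^\pm)$ has a Brauer obstruction to lifting to $\GL(\MM_U^\pm)$ controlled by the class of $\CliffAlg_0^\pm$ in $\Brtwo(X)$; but each $\CliffAlg_0^\pm$ is already a global Azumaya algebra and is locally split, so a lift exists up to a $\Gm$-cocycle, producing global vector bundles $\MM^\pm$ of rank $2^{m-1}$ (twists of $\exterior^\pm_\LL\FF$ by line bundles). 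The local morphisms $\Phi_0^U, \Phi_1^U$ then assemble into the required global $\Phi_0, \Phi_1$, with the bimodule compatibility for $\Phi_1$ reducing to the local check of Example~\ref{ex:hyperbolic}.

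The principal obstacle is this descent: ensuring the $\PGL$-cocycle lifts consistently across triple overlaps to a $\GL$-cocycle, and that the ambiguity in the lift---a $\Gm$-cocycle giving a line bundle twist---yields a pair $\MM^\pm$ independent of the auxiliary covering and choices. A conceptually cleaner alternative would be to invoke the universal properties (Propositions~\ref{prop:universal_C0} and~\ref{prop:universal_C1}) directly, constructing an intrinsic $\OO_X$-algebra morphism $j_0 : \EE \tensor \EE \tensor \LL\dual \to \EEnd(\exterior^+_\LL\FF) \times \EEnd(\exterior^-_\LL\FF)$ from the canonical contraction $\EE \to \HHom(\FF,\LL)$ and wedging on $\exterior\FF$, exploiting the defining relations of $\CliffAlg_0$ to absorb the ambiguity of lifting sections of $\EE$ into $\FF$; then $\Phi_0, \Phi_1$ are isomorphisms by Zariski-local verification via Example~\ref{ex:hyperbolic}.
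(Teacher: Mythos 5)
Your reduction to the local hyperbolic picture and your identification of the transition data as transvections acting by conjugation by $\exp(\phi)$ is the right starting point, but the descent step contains a genuine gap, and it is exactly at the heart of the theorem. You write that the resulting $\PGL(\MM^\pm_U)$-valued \v{C}ech cocycle lifts to $\GL$ ``because each $\CliffAlg_0^\pm$ is already a global Azumaya algebra and is locally split.'' This is a non sequitur: \emph{every} $\PGL$-valued $1$-cocycle produces a global, locally split Azumaya algebra, and the obstruction to lifting it to a $\GL$-cocycle is precisely the Brauer class of that algebra, whose vanishing is the content of the theorem (it is the line bundle-valued analogue of Knus--Ojanguren). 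If your reasoning were valid, it would show that every Azumaya algebra is an endomorphism algebra of a vector bundle. What is actually needed, and what the paper supplies, is a lift at the level of group schemes: one constructs a genuine homomorphism $\rho^\pm$ from the parabolic subgroup $\SOrth\bigl(H_\LL(\PP),\PP\bigr)$ of isometries preserving the lagrangian to $\GL\bigl(\exterior^\pm_\LL\PP\bigr)$, built from the $\LL$-valued Bourbaki tensor operations $\Psi^\pm$ (your $\exp(\phi)$) together with the Levi factor $\wedge^\pm_\LL(\alpha)$, and verifies via the Levi decomposition that it is well defined and lifts $\gamma_0$. Because $\rho^\pm$ is a homomorphism, the lifted transition data automatically satisfies the cocycle condition on triple overlaps, which is the ``principal obstacle'' you name but do not resolve; the multiplicativity of $\Psi^\pm$ (and its compatibility with the $\GL(\PP)$-action, equation \eqref{eq:Phi_prop}) is the missing argument.

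Your proposed ``conceptually cleaner alternative'' cannot repair this. An intrinsic algebra map $j_0 : \EE\tensor\EE\tensor\LL\dual \to \EEnd(\exterior^+_\LL\FF)\times\EEnd(\exterior^-_\LL\FF)$ that is locally the hyperbolic isomorphism would force $\MM^\pm \isom \exterior^\pm_\LL\FF$ globally; but the paper notes (and \cite{knus_ojanguren:metabolic} shows already for $\LL=\OO_X$) that the hyperbolic formula does \emph{not} persist for nonsplit metabolic forms, so no such natural map exists --- a section of $\EE$ simply does not act on $\exterior\FF$ without a choice of splitting, and changing the splitting conjugates the would-be action by $\Psi^\pm(\phi)$ rather than leaving it fixed. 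For the same reason your parenthetical claim that $\MM^\pm$ are line-bundle twists of $\exterior^\pm_\LL\FF$ is unjustified (it would again give $\EEnd(\MM^\pm)\isom\EEnd(\exterior^\pm_\LL\FF)$). The correct conclusion of the descent, as in the paper, is only that the cocycle $\rho^\pm(\text{transition data})$ defines \emph{some} vector bundles $\MM^\pm$ of rank $2^{m-1}$, with $\Phi_0$, $\Phi_1$ then obtained by chasing the commutative diagrams of nonabelian cohomology sets (and the bimodule compatibility checked locally, as you indicate).
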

\begin{proof}
We generalize the proof from
Knus--Ojanguren~\cite{knus_ojanguren:metabolic} to the line
bundle-valued setting.  On the category of vector bundles, write
$(-)\Ldual$ for the functor $\HHom(-,\LL)$ and $\can_\LL$ for the
canonical isomorphism of functors $\id \to ((-)\Ldual)\Ldual$.

Let $\PP$ be a vector bundle of rank $m \geq 1$ and $H_\LL(\PP)$ be
the corresponding $\LL$-valued hyperbolic form.  Denote by $\gamma_0 :
\Orth\bigl(H_\LL(\PP)\bigr) \to \AAut_{\OO_X\alg}\bigl(
\CliffAlg_0(H_\LL(\PP))\bigr)$ the homomorphism induced by
Proposition~\ref{prop:properties}\ref{prop:properties.similarity}.
Restricting $\gamma_0$ to the center yields the \linedef{Dickson}
homomorphism $\Delta : \Orth\bigl(H_\LL(\PP)\bigr) \to
\AAut_{\OO_X\alg}\bigl( \CliffZ(H_\LL(\PP)) \bigr) = \Z/2\Z$ of group schemes,
cf.\ \cite[\S1.9]{auel:clifford}.  Its kernel is the special
orthogonal group scheme
$\SOrth\bigl(H_\LL(\PP)\bigr)$.  Under the identification
$\CliffAlg_0(H_\LL(\PP)) = \EEnd\bigl(\exterior^+_\LL\PP\bigr) \times
\EEnd\bigl(\exterior^-_\LL\PP\bigr)$ of Example~\ref{ex:hyperbolic},
we have that $\gamma_0$ restricts to a homomorphism
$$
\gamma_0 : \SOrth\bigl(H_\LL(\PP)\bigr) \to
\AAut_{\CliffZ\alg}\bigl(\CliffAlg_0(H_\LL(\PP))\bigr) \isom \PGL\bigl(\exterior^+_\LL\PP\bigr) \times
\PGL\bigl(\exterior^-_\LL\PP\bigr).
$$  

Similarly, denote by $\gamma_1 : \Orth\bigl(H_\LL(\PP)\bigr) \to
\AAut_{\OO_X-\mathrm{mod}}\bigl( \CliffAlg_1(H_\LL(\PP)) \bigr)$ the homomorphism induced by
Proposition~\ref{prop:properties_C_1}\ref{properties_C_1.similarity}.
Under the identification of $\CliffAlg_1(H_\LL(\PP))$ with the vector bundle
$\HHom\left(\exterior^{+}_\LL \PP, \exterior^{-}_\LL \PP\right) \oplus
\HHom\left(\exterior^{-}_\LL \PP, \exterior^{+}_\LL \PP\right) \tensor
\LL$ of Example~\ref{ex:hyperbolic}, we have that $\gamma_1$ restricts
to a homomorphism
$$
\gamma_1 : \SOrth\bigl(H_\LL(\PP)\bigr) \to
\AAut_{\CliffZ}\bigl(\CliffAlg_1(H_\LL(\PP))\bigr) \isom \GL(\HH^+) \times \GL(\HH^-)
$$  
where we write $\HH^+ = \HHom\bigl(\exterior^{+}_\LL\PP, \exterior^{-}_\LL
\PP\bigr)$ and $\HH^- = \HHom\bigl(\exterior^{-}_\LL \PP,
\exterior^{+}_\LL \PP \tensor \LL\bigr)$.

The parabolic subgroup $\SOrth\bigl(H_\LL(\PP),\PP\bigr) \subset
\SOrth\bigl(H_\LL(\PP)\bigr)$ of isometries preserving $\PP$ has the
following block description
$$
\SOrth\bigl(H_\LL(\PP),\PP\bigr) (U) = \left\{
\begin{pmatrix}
(\alpha\Ldual)\inv & \beta \\
0 & \alpha
\end{pmatrix}
 \; : \;
\beta\Ldual\, \can_\LL\, \alpha ~ \text{is alternating}
\;
\right\}
$$
where for each $U \to X$ and each $\alpha \in \Hom(\PP|_U,\PP|_U)$ and
$\beta \in \Hom(\PP|_U,\PP|_U\Ldual)$, we consider $\beta\Ldual \,
\can_\LL \, \alpha : \PP_U \to \PP|_U\Ldual$ as the adjoint of an
$\LL|_U$-valued bilinear form.

We use an $\LL$-valued version of Bourbaki's tensor operations for
even Clifford algebras, cf.\ \cite[Thm.~2.2]{balaji_ternary}.  In
particular, under the canonical identifications
$\CliffAlg_0(\PP,0,\LL) = \exterior_\LL^+\PP$ and
$\CliffAlg_1(\PP,0,\LL) = \exterior_\LL^-\PP$, there exist
homomorphisms of sheaves of groups
$$
\Psi^\pm : \HHom\bigl(\exterior^2\PP,\LL\bigr) \to 
\GL\bigl( \exterior^\pm_\LL\PP \bigr)
$$
satisfying the following properties:
\begin{equation}
\label{eq:Phi_prop}
\Psi^\pm({b \circ \wedge^2\vp}) = \wedge_\LL^\pm(\vp)\inv
\; \Psi^\pm(b)\, \wedge_\LL^\pm(\vp)
\end{equation} 
for each alternating form $b : \exterior^2\PP \to \LL$ and each $\vp
\in \GL(\PP)$; and
\begin{equation}
\label{eq:Phi_bilinear}
\psi_b=\psi_{b'} \quad \Rightarrow \quad \Psi^\pm(b) =
\Psi^\pm({b'})
\end{equation}
where $\psi_b : \PP \to \PP\Ldual$ is the adjoint map to the
alternating form $b : \exterior^2\PP \to \LL$.  By
\eqref{eq:Phi_bilinear}, we can write $\Psi^\pm(\psi)$ in place of
$\Psi^\pm(b)$ for any $\OO_X$-module morphism $\psi : \PP \to
\PP\Ldual$ that is adjoint to an alternating form $b : \exterior^2\PP
\to \LL$.

With this in hand, we define maps
\begin{align*}
\rho^\pm : \SOrth\bigl(H_\LL(\PP),\PP\bigr) & {} \to \GL\bigl(\exterior_\LL^\pm\PP\bigr)\\
 \begin{pmatrix}
(\alpha\Ldual)\inv & \beta \\
0 & \alpha
\end{pmatrix}
 & {} \mapsto \wedge_\LL^\pm(\alpha)\, \Psi^\pm({\alpha\Ldual \beta})
\end{align*}
which we now proceed to verify are well defined homomorphisms.
Consider the Levi decomposition $\SOrth\bigl(H_\LL(\PP),\PP\bigr)
=\Group{M}\Group{N}=\Group{N}\Group{M}$ given explicitly by
$$
\begin{pmatrix}
(\alpha\Ldual)\inv & \beta \\
0 & \alpha
\end{pmatrix}
=
\begin{pmatrix}
(\alpha\Ldual)\inv & 0 \\
0 & \alpha
\end{pmatrix}
\begin{pmatrix}
1 & \alpha\Ldual \beta \\
0 & 1
\end{pmatrix}
=
\begin{pmatrix}
1 & \beta \alpha\inv \\
0 & 1
\end{pmatrix}
\begin{pmatrix}
(\alpha\Ldual)\inv & 0 \\
0 & \alpha
\end{pmatrix}
$$
and note that $\alpha\Ldual \beta$ (being the transpose of
$\beta\Ldual\,\can_\LL\,\alpha$) is adjoint to an alternating form,
say $b : \exterior^2 \PP \to \LL$.  Then $\beta\alpha\inv$ is adjoint
to the alternating form $b \circ \wedge^2\alpha\inv$, since we can
write $\beta\alpha\inv = (\alpha\inv)\Ldual(\alpha\Ldual \beta)
\alpha\inv$.  Hence by \eqref{eq:Phi_prop}, $\rho^\pm$ is also given
by $\Psi^\pm(\beta\alpha\inv) \wedge^\pm_\LL(\alpha)$.  Since
$\rho^\pm$ is based on, and independent of, the Levi decomposition
order, it is a well defined group scheme homomorphism.

Denoting by $\rho_0 =\rho^+ \times \rho^- :
\SOrth\bigl(H_\LL(\PP),\PP\bigr) \to \GL\bigl(\exterior^+_\LL\PP\bigr)
\times \GL\bigl(\exterior^-_\LL\PP\bigr)$, consider the diagram
$$
\xymatrix{
\SOrth\bigl(H_\LL(\PP),\PP\bigr)\ar[d]_{\rho_0} \ar[r] & \SOrth\bigl(H_\LL(\PP)\bigr) \ar[d]^{\gamma_0}\\
\GL\bigl(\exterior^+_\LL\PP\bigr) \times \GL\bigl(\exterior^-_\LL\PP\bigr) \ar[r] & \PGL\bigl(\exterior^+_\LL\PP\bigr) \times \PGL\bigl(\exterior^-_\LL\PP\bigr)
}
$$
of group schemes, where the horizontal arrows are the obvious ones.
The fiber of this diagram over any point of $X$ is isomorphic to the
restriction, to the special orthogonal group and even Clifford
algebra, of the corresponding commutative diagram of orthogonal groups
and (full) Clifford algebras in \cite[Thm.]{knus_ojanguren:metabolic}
(cf.\ \cite[IV~Prop.~2.4.2]{knus:quadratic_hermitian_forms}).  Hence
the diagram commutes over $X$.

We now consider the induced commutative diagram of pointed nonabelian
cohomology sets:\ $\Het^1\bigl(X,\SOrth(H_\LL(\PP)))$ is in bijection with
the set of similarity classes of $\LL$-valued quadratic forms
$(\EE,q,\LL)$ of rank $2m$ together with an \linedef{orientation}
isomorphism $\zeta : \CliffZ(\EE,q,\LL) \isom \OO_X\times\OO_X$ (cf.\
\cite[Prop.~1.15]{auel:clifford}); $\Het^1(X,\SOrth(H_\LL(\PP),\PP))$
is in bijection with the set of similarity classes of metabolic
$\LL$-valued quadratic forms $(\EE,q,\LL)$ of rank $n=2m$ together
with a choice of lagrangian; $\Het^1(X,\GL(\exterior_\LL^\pm\PP))$ is
in bijection with the set of isomorphism classes of vector bundles
$\MM^\pm$ of rank $2^{m-1}$; $\Het^1(X,\PGL(\exterior_\LL^\pm\PP))$ is
in bijection with the set of isomorphism classes of Azumaya algebras
of degree $2^{m-1}$; the induced map
$$
\Het^1\bigl(X,\SOrth\bigl(H_\LL(\PP),\PP\bigr)\bigr) \to
\Het^1
\bigl(X,\SOrth\bigl(H_\LL(\PP)\bigr)\bigr)
$$ 
replaces the choice of lagrangian by the orientation it canonically
induces (cf.\ \cite[Lemma~1.14]{auel:clifford}); the map induced by
$\gamma_0$ takes an oriented quadratic form to its even Clifford
algebra together with the splitting of its center induced by the
orientation; the map induced by $\rho_0$ takes a metabolic quadratic
form of rank $n=2m$ together with a choice of lagrangian to a pair of
vector bundles $\MM^+$ and $\MM^-$ of rank $2^{m-1}$; the induced map
$$
\Het^1\bigl(X,\GL\bigl(\exterior^\pm_\LL\PP\bigr)\bigr) \to
\Het^1\bigl(X,\PGL\bigl(\exterior^\pm_\LL\PP\bigr)\bigr)
$$
takes a vector bundle $\MM^\pm$
to the Azumaya algebra $\EEnd(\MM^\pm)$.  Chasing the diagram around
shows that if $(\EE,q,\LL)$ is a metabolic quadratic form of rank
$2m$, then $\CliffAlg_0(\EE,b,\LL)$ is isomorphic to $\EEnd(\MM^+)
\times \EEnd(\MM^-)$ for vector bundles $\MM^+$ and $\MM^-$ of rank
$2^{m-1}$ on $X$.

To identify the Clifford bimodule, consider the diagram
$$
\xymatrix{
\SOrth\bigl(H_\LL(\PP),\PP\bigr)\ar[d]_{\rho_0} \ar[r] & \SOrth\bigl(H_\LL(\PP)\bigr) \ar[d]^{\gamma_1}\\
\GL\bigl(\exterior^+_\LL\PP\bigr) \times
\GL\bigl(\exterior^-_\LL\PP\bigr) \ar[r]^(.53){c} & 
\GL(\HH^+) \times \GL(\HH^-)
}
$$
of group schemes, where the top horizontal arrow is the canonical
one, and $c$ is the evident homomorphism defined by compositions.
The fiber of this diagram over any point of $X$ is isomorphic to the
restriction, to the special orthogonal group and odd part of the
Clifford algebra, of the corresponding commutative diagram of
orthogonal groups and (full) Clifford algebras in
\cite[Thm.]{knus_ojanguren:metabolic} (cf.\
\cite[IV~Prop.~2.4.2]{knus:quadratic_hermitian_forms}).  Hence the
diagram commutes over $X$.

As above, we consider the induced commutative diagram of pointed
nonabelian cohomology sets:\ the map induced by $\gamma_1$ takes an
oriented quadratic form to its Clifford bimodule, together with a
direct sum decomposition stable under the action of the center; the
map induced by $c$ takes a pair of vector bundles $\MM^+$ and $\MM^-$
of rank $2^{m-1}$ to $\HHom(\MM^+,\MM^-) \oplus
\HHom(\MM^-,\MM^+\tensor\LL)$.  Chasing the diagram around gives the
stated identification.  The compatibility of the bimodule structures
can then be checked locally.
\end{proof}

\subsection{Orthogonal sums}
\label{subsec:Orthogonal_sums}

We will also need an orthogonal sum formula for the even Clifford
algebra.  Let $(\EE,q,\LL)$ and $(\EE',q',\LL)$ be quadratic forms
over a scheme $X$ and denote by
$$
i_0 : \EE\tensor\EE\tensor\LL\dual \to \CliffAlg_0(q), \qquad
i_0' : \EE'\tensor\EE'\tensor\LL\dual \to \CliffAlg_0(q'),
$$
$$
i_1 : \EE \to \CliffAlg_1(q), \qquad
i_1' : \EE' \to \CliffAlg_1(q'),
$$
the canonical $\OO_X$-module morphisms \eqref{eq:C_0_injection} and
\eqref{eq:C_1_injection}, respectively.

We define an $\OO_X$-algebra structure on
$\CliffAlg_0(q)\tensor\CliffAlg_0(q') \oplus
\CliffAlg_1(q)\tensor\CliffAlg_1(q')\tensor \LL\dual$ as follows: 
by multiplication in $\CliffAlg_0$ (for products between elements of
the first summand), by the $\CliffAlg_0$-bimodule action $\CliffAlg_1$
(between elements of the first and second summands), and by the
multiplication map $m : \CliffAlg_1 \tensor \CliffAlg_1 \to
\CliffAlg_0\tensor \LL$ in
Proposition~\ref{prop:properties_C_1}\ref{properties_C_1.m} (between
elements of the second summand) followed by evaluation with
$\LL\dual$.  One can check that the map
\begin{align*}
(\EE \oplus \EE')\tensor(\EE \oplus \EE')\tensor\LL\dual 
& {} \to
\CliffAlg_0(q)\tensor\CliffAlg_0(q') \oplus
\CliffAlg_1(q)\tensor\CliffAlg_1(q')\tensor \LL\dual \\
(v+v')\tensor(w+w')\tensor f 
& {} \mapsto 
\bigl( 
i_0(v\tensor w \tensor f) \tensor 1 + 1 \tensor
i_0'(v'\tensor w' \tensor f) 
\bigr) \\
& {} \qquad \qquad + 
\bigl(
i_1(v)\tensor i_1'(w') \tensor f - 
i_1(w)\tensor i_1'(v') \tensor f
\bigr)
\end{align*}
satisfies the universal property of the even Clifford algebra, hence
induces an $\OO_X$-algebra morphism $\CliffAlg_0(q \perp q') \to
\CliffAlg_0(q)\tensor\CliffAlg_0(q') \oplus
\CliffAlg_1(q)\tensor\CliffAlg_1(q')\tensor \LL\dual$.  Via this
morphism, there is an induced $\CliffAlg_0(q\perp q')$-bimodule
structure on $\CliffAlg_0(q)\tensor\CliffAlg_1(q') \oplus
\CliffAlg_1(q)\tensor\CliffAlg_0(q')$, and one can check that the map
\begin{align*}
\EE \oplus \EE' & {} \to \CliffAlg_0(q)\tensor\CliffAlg_1(q')
\oplus \CliffAlg_1(q)\tensor\CliffAlg_0(q') \\
v+v' & {} \mapsto  i_1(v) \tensor 1  +  1 \tensor i_1'(v')
\end{align*}
satisfies the universal property of the Clifford bimodule.  

\begin{theorem}
\label{thm:Cliff_perp_formula}
Let $(\EE,q,\LL)$ and $(\EE',q',\LL)$ be quadratic forms
over a scheme $X$.
Then the $\OO_X$-algebra morphism
\begin{equation}
\label{eq:even_C_0_perp_isom}
\CliffAlg_0(q \perp q') \to
\CliffAlg_0(q)\tensor\CliffAlg_0(q') \oplus
\CliffAlg_1(q)\tensor\CliffAlg_1(q')\tensor \LL\dual
\end{equation}
and the $\CliffAlg_0(q \perp q')$-bimodule morphism
\begin{equation}
\label{eq:even_C_1_perp_isom}
\CliffAlg_1(q \perp q') \to \CliffAlg_0(q)\tensor\CliffAlg_1(q')
\oplus \CliffAlg_1(q)\tensor\CliffAlg_0(q'),
\end{equation}
induced from the universal properties, are isomorphisms. 
\end{theorem}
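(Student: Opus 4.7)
The morphisms \eqref{eq:even_C_0_perp_isom} and \eqref{eq:even_C_1_perp_isom} are already constructed from the universal properties (Propositions~\ref{prop:universal_C0} and \ref{prop:universal_C1}) applied to the $\OO_X$-linear maps described just before the statement; the issue is only to verify bijectivity. Since both sides are coherent sheaves of $\OO_X$-modules, and indeed by the filtration discussion in \S\ref{subsec:Even_Clifford_algebra}--\ref{subsec:Clifford_bimodule} are locally free of rank $2^{n+n'-1}$ on each side (where $n=\rank\EE$ and $n'=\rank\EE'$), it suffices to check that the two morphisms are isomorphisms on stalks, or equivalently after pullback along a faithfully flat cover.

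The plan is therefore to reduce to the classical setting by working Zariski locally on $X$, where $\LL$ trivializes. Choose an affine open $U \subset X$ together with an isomorphism $\phi : \OO_U \isomto \LL|_U$; this gives a quadratic alignment $A = (\OO_U,\phi)$ from $\OO_U$ to $\LL|_U$ in the sense of \S\ref{subsec:total} (well, in the notation preceding \S\ref{subsec:Even_Clifford_algebra}). By Propositions~\ref{prop:properties}\ref{prop:properties.proj} and \ref{prop:properties_C_1}\ref{properties_C_1.alignment} together with \ref{prop:properties}\ref{prop:properties.functorial}, pullback to $U$ identifies $(\EE,q,\LL)|_U$ with an $\OO_U$-valued quadratic form $(\EE|_U,\phi\inv q,\OO_U)$, and the constructions $\CliffAlg_0$ and $\CliffAlg_1$ intertwine with these identifications. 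We are thus reduced to the case $\LL = \OO_X$, in which $\CliffAlg_0$ and $\CliffAlg_1$ recover the classical even part and odd part, respectively, of the ordinary Clifford algebra $\CliffAlg(\EE,q)$.

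In the classical case ($\LL = \OO_X$), the orthogonal sum formula is the $\Z/2$-graded tensor product decomposition $\CliffAlg(q \perp q') \isom \CliffAlg(q)\,\widehat{\tensor}\,\CliffAlg(q')$, which is proved, for example, in~\cite[IV~Prop.~1.2.3]{knus:quadratic_hermitian_forms}. Unwinding the $\Z/2$-grading gives isomorphisms
\begin{align*}
\CliffAlg_0(q \perp q') & {}\isom \CliffAlg_0(q)\tensor\CliffAlg_0(q') \oplus \CliffAlg_1(q)\tensor\CliffAlg_1(q'), \\
\CliffAlg_1(q \perp q') & {}\isom \CliffAlg_0(q)\tensor\CliffAlg_1(q') \oplus \CliffAlg_1(q)\tensor\CliffAlg_0(q'),
\end{align*}
and one checks by inspection of generators that these classical isomorphisms agree with the morphisms induced by the universal properties via the explicit formulas $(v+v')\tensor(w+w')\tensor 1 \mapsto i_0(v\tensor w)\tensor 1 + 1\tensor i_0'(v'\tensor w') + i_1(v)\tensor i_1'(w') - i_1(w)\tensor i_1'(v')$ and $v+v' \mapsto i_1(v)\tensor 1 + 1 \tensor i_1'(v')$ specialized to $f = 1$. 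Hence both morphisms are isomorphisms Zariski locally on $X$, and therefore globally.

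The main obstacle will be the bookkeeping in step three: verifying that the $\OO_X$-algebra structure we imposed on $\CliffAlg_0(q)\tensor\CliffAlg_0(q') \oplus \CliffAlg_1(q)\tensor\CliffAlg_1(q')\tensor\LL\dual$ --- built from multiplication in $\CliffAlg_0$, the bimodule action of $\CliffAlg_0$ on $\CliffAlg_1$, and the multiplication map $m$ of Proposition~\ref{prop:properties_C_1}\ref{properties_C_1.m} --- really matches the classical graded tensor product structure under the trivialization $\phi$, including the sign convention encoded in the term $i_1(v)\tensor i_1'(w') - i_1(w)\tensor i_1'(v')$. This reduces to checking, on sections and locally, that $m(i_1(v)\tensor i_1(v)) = 1 \tensor q(v)$ recovers the classical identity $v^2 = q(v)$ and that the anticommutativity between the two factors in the graded tensor product is reproduced. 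Once this calculation is in place, the universal property ensures uniqueness of both morphisms, so the identification with the classical isomorphism on each trivializing open is automatic.
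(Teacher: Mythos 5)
Your proposal is correct and takes essentially the same route as the paper: the paper's entire proof is the observation that, Zariski locally where $\LL$ is trivial, the two morphisms agree with their classical counterparts for the ordinary Clifford algebra of an orthogonal sum (citing \cite[IV~Thm.~1.3.1]{knus:quadratic_hermitian_forms}) and hence are isomorphisms. The bookkeeping you flag at the end --- matching the imposed algebra structure, including signs, with the classical $\Z/2$-graded tensor product under a trivialization of $\LL$ --- is precisely the local verification the paper leaves implicit, so your write-up is just a more detailed version of the same argument.
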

\begin{proof}
Locally, when $\LL$ is trivial, these maps agree with their classical
counterparts (cf.\
\cite[IV~Thm.~1.3.1]{knus:quadratic_hermitian_forms}) and hence are
isomorphisms.
\end{proof}

\section{Total Witt groups and total classical invariants}
\label{sec:invariants}

In this section, we define the notion of total Witt groups and
construct the total classical cohomological invariants on these
groups.

\subsection{Total Witt groups}
\label{subsec:total}

One must be careful when working with ``total'' Witt groups.  Fix a
scheme $X$ and denote by $W(X,\LL)$ the (quadratic) Witt group of
regular $\LL$-valued quadratic forms modulo metabolic forms on $X$.
We usually write $W(X) = W(X,\OO_X)$.  Every Witt class can be
represented by a regular quadratic form, see~\cite[\S5]{knebusch}.

We also fix a set $P$ of line bundle representatives of the quotient
group $\Pic(X)/2$.  With respect to this choice, we define the
\linedef{total (quadratic) Witt group} $\Wtot(X) = \bigoplus_{\LL \in
P} W(X,\LL)$.  While the abelian group $\Wtot(X)$ is only well defined
up to non-canonical isomorphism depending on our choice of $P$, the
cohomological invariants we consider will not depend on such choices.
Definition~\ref{def:system} makes this precise.

Most importantly, we will not consider any ring structure on
$\Wtot(X)$ and thus will not need to descend into the subtle
considerations of~\cite{balmer_calmes:lax}.  

\begin{defin}
\label{def:system}
Fix an abelian group $H$ and group homomorphisms $e_{\LL} : W(X,\LL)
\to H$ for each line bundle $\LL$.  We say that the system
$\{e_{\LL}\}$ is a \linedef{system of projective similarity class
invariants} if for any quadratic alignment $A = (\NN,\phi)$ between
line bundles $\LL$ and $\LL'$, there is a commutative diagram
$$
\xymatrix{
W(X,\LL)\ar[d]_{A^{\circlearrowleft}}\ar[r]^(.65){e_{\LL}} & H \ar@{=}[d]\\
W(X,\LL') \ar[r]^(.65){e_{\LL'}} & H
}
$$ 
of abelian groups.   
One could axiomatize this notion using the language of morphisms of
functors, together with a compatibility condition with respect to
quadratic alignments.

Given a system $\{e_{\LL}\}$ of projective similarity class
invariants, the combined homomorphism
$$
e = \oplus_{\LL \in P}\, e_{\LL}: \Wtot(X) = {\textstyle\bigoplus_{\LL \in P}} W(X,\LL) \to H
$$
is well defined and independent of the choice $P$ of representatives
of $\Pic(X)/2$.  We call $e$ the \linedef{total invariant} associated
to the system $\{e_\LL\}$ of projective similarity class invariants.
\end{defin}

A reader who is unhappy with this formalism may, for example, simply
replace the statement ``$e : \Wtot(X) \to H$ is surjective'' by the
equivalent statement ``for each $h \in H$, there exists a line bundle
$\LL$ and a class $q \in W(X,\LL)$, such that $e_{\LL}(q) = h$''.

\subsection{Rank modulo 2}

For each line bundle $\LL$, the rank modulo 2 defines a homomorphism
$e^0_\LL : W(X,\LL) \to \Het^0(X,\Z/2\Z)$.  Then $\{e^0_\LL\}$ is a
system of projective similarity class invariants and there is a
\linedef{total rank modulo 2} homomorphism
$$
e^0 : \Wtot(X) \to \Het^0(X,\Z/2\Z).
$$
Denote by $I^1(X,\LL) \subset W(X,\LL)$ the kernel of $e^0_\LL$ 
and by $\Itot^1(X) = \bigoplus_{\LL \in P} I^1(X,\LL)$.  Note that if
$\LL$ is not a square in $\Pic(X)$ then $I^1(X,\LL)=W(X,\LL)$, cf.\
\cite[Lemma~1.6]{auel:clifford}. Thus $e^0$ has kernel $\Itot^1(X)$.

\subsection{Total discriminant}

Recall from
Proposition~\ref{prop:properties}\ref{prop:properties.center} that the
center $\CliffZ(\EE,q,\LL)$ of the even Clifford algebra
$\CliffAlg_0(\EE,q,\LL)$ of a regular quadratic form $(\EE,q,\LL)$ is
an \'etale quadratic $\OO_X$-algebra.  We call its $X$-algebra
isomorphism class in $\Het^1(X,\Z/2\Z)$ the \linedef{discriminant
invariant} $\discs(\EE,q,\LL)$.

\begin{remark}
If 2 is invertible on $X$ and $(\EE,q,\LL)$ is a regular quadratic
form of even rank $n=2m$, then under the canonical homomorphism
$H^1(X,\Z/2\Z) \to H^1(X,\muu_2)$, the discriminant invariant
$d(\EE,q,\LL)$ maps to the class of the signed discriminant module
$\det \EE \tensor (\LL\dual)^{\tensor m}$ (defined
in~\cite[\S4]{parimala_sridharan:norms_and_pfaffians}) of the
associated symmetric bilinear form $b_q$,
cf.~\cite[\S1.9]{auel:clifford}.
\end{remark}

\begin{prop}
\label{prop:disc_perp}
Let $(\EE,q,\LL)$ and $(\EE',q',\LL)$ be regular quadratic forms of
even rank over a scheme $X$.  Then $\discs(q \perp q') = \discs(q) +
\discs(q')$ in $\Het^1(X,\Z/2\Z)$.
\end{prop}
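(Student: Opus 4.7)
The plan is to invoke the orthogonal sum formula for the even Clifford algebra (Theorem~\ref{thm:Cliff_perp_formula}), extract the center of the resulting description, and identify it with the sum $\discs(q)+\discs(q')$ in $\Het^1(X,\Z/2\Z)$ via the group law on \'etale quadratic algebras.

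First I would apply Theorem~\ref{thm:Cliff_perp_formula} to get a canonical $\OO_X$-algebra isomorphism
$$
\CliffAlg_0(q \perp q') \isom \CliffAlg_0(q)\tensor\CliffAlg_0(q') \;\oplus\; \CliffAlg_1(q)\tensor\CliffAlg_1(q')\tensor \LL\dual,
$$
with the algebra structure described in \S\ref{subsec:Orthogonal_sums}. This decomposition carries a natural $\Z/2\Z$-grading (first summand in degree $0$, second in degree $1$) preserved by the multiplication rules recalled there, so any central element splits into graded components and each component must be central.

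Next I would compute the degree-$0$ part of the center. Such an element is in particular central in $\CliffAlg_0(q)\tensor\CliffAlg_0(q')$ and so lies in $\CliffZ(q)\tensor\CliffZ(q')$ (by Proposition~\ref{prop:properties}\ref{prop:properties.Azumaya}). Using the semilinearity of the action of the center on the Clifford bimodule (Proposition~\ref{prop:properties_C_1}\ref{properties_C_1.semilinear}), namely $x \cdot z = \iota(z) * x$, a direct computation shows that such an element commutes with the second summand if and only if it is fixed by the diagonal involution $\iota\tensor\iota$. Hence the degree-$0$ central part lies in $(\CliffZ(q)\tensor\CliffZ(q'))^{\iota\tensor\iota}$, which is locally free of rank $2$ over $\OO_X$ (the fixed subalgebra of a rank-$4$ \'etale algebra under a nontrivial involution). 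Since $\CliffZ(q\perp q')$ is itself locally free of rank $2$ by Proposition~\ref{prop:properties}\ref{prop:properties.center}, a rank comparison forces
$$
\CliffZ(q\perp q') \isom \bigl(\CliffZ(q)\tensor_{\OO_X}\CliffZ(q')\bigr)^{\iota\tensor\iota},
$$
ruling out in particular any degree-$1$ central elements.

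Finally, I would invoke the correspondence between \'etale quadratic $\OO_X$-algebras and $\Z/2\Z$-torsors, under which the group law is given by quotienting the product of torsors by the diagonal action; on function rings this is precisely the operation $(A,B) \mapsto (A\tensor_{\OO_X} B)^{\iota_A\tensor\iota_B}$. Together with the previous step, this yields $\discs(q \perp q') = \discs(q) + \discs(q')$ in $\Het^1(X,\Z/2\Z)$. The main subtlety lies in the centrality computation, where the semilinearity of the Clifford bimodule under its center plays the decisive role; as a sanity check, one may Zariski-localize on $X$ to trivialize $\LL$ and recover the classical identity $\disc(q\perp q') = \disc(q)\cdot\disc(q')$ together with the naturality of $\CliffZ$ from Proposition~\ref{prop:properties}\ref{prop:properties.functorial}.
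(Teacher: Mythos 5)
Your overall strategy is the same as the paper's: apply the orthogonal sum formula of Theorem~\ref{thm:Cliff_perp_formula}, use the semilinearity of the center action on the Clifford bimodule (Proposition~\ref{prop:properties_C_1}\ref{properties_C_1.semilinear}) to relate the center of $\CliffAlg_0(q\perp q')$ to $\CliffZ(q)\tensor\CliffZ(q')$, and identify addition in $\Het^1(X,\Z/2\Z)$ with the invariant subalgebra $(\CliffZ(q)\tensor\CliffZ(q'))^{\iota\tensor\iota'}$. However, there is a genuine gap at the step where you claim that ``a rank comparison forces'' $\CliffZ(q\perp q')\isom(\CliffZ(q)\tensor\CliffZ(q'))^{\iota\tensor\iota'}$. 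What you have actually shown is only one containment in one graded piece: the degree-$0$ part of the center sits inside the rank-$2$ invariant subalgebra. You have not shown the reverse containment (that every element of $(\CliffZ(q)\tensor\CliffZ(q'))^{\iota\tensor\iota'}$ commutes with the odd summand $\CliffAlg_1(q)\tensor\CliffAlg_1(q')\tensor\LL\dual$, hence is central), and the rank count does not rule out degree-$1$ central elements: a priori $\CliffZ(q\perp q')$ could have rank $2$ with degree-$0$ part only $\OO_X$ and a rank-$1$ degree-$1$ complement, which is perfectly consistent with everything you proved. So the displayed isomorphism is asserted, not forced.

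The fix is precisely the step you relegate to a ``sanity check'': the canonical algebra morphism $\CliffZ(q\perp q')\to\CliffZ(q)\circ\CliffZ(q')$ that your semilinearity computation produces should be checked to be an isomorphism Zariski-locally, where $\LL$ is trivial and the statement is the classical one (this is what the paper does, citing \cite[IV~\S4.4]{knus:quadratic_hermitian_forms}). Alternatively, you can close the gap intrinsically: if $x_1$ in the odd summand is central, then commutation with $z\tensor 1$ for $z$ a section of $\CliffZ(q)$ gives $(z-\iota(z))*x_1=0$ by semilinearity, and since $\CliffZ(q)$ is \'etale quadratic one can locally choose $z$ with $z-\iota(z)$ a unit (its square is the discriminant unit), forcing $x_1=0$; one still then needs surjectivity onto the invariants, which again is most cleanly a local verification. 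Either way, the local identification is the essential closing step of the argument, not an optional consistency check.
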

\begin{proof}
We recall (cf. \cite[III~Prop.~4.1.4]{knus:quadratic_hermitian_forms})
that given \'etale quadratic $\OO_X$-algebras $\CliffZ$ and
$\CliffZ'$, the addition of classes $[\CliffZ]$ and $[\CliffZ']$ in
$\Het^1(X,\Z/2\Z)$ is represented by the quadratic \'etale algebra
$\CliffZ \circ \CliffZ'$, defined to be the $\OO_X$-subalgebra of
$\CliffZ \tensor \CliffZ'$ invariant under the diagonal Galois action
$\iota \tensor \iota'$, where $\iota$ and $\iota'$ are the nontrivial
$\OO_X$-automorphisms of $\CliffZ$ and $\CliffZ'$, respectively.

Using
Proposition~\ref{prop:properties_C_1}\ref{properties_C_1.semilinear}, 
we see that restricting the isomorphism \eqref{eq:even_C_0_perp_isom}
to the center yields an $\OO_X$-algebra morphism $\CliffZ(q \perp q')
\to \CliffZ(q) \tensor \CliffZ(q')$, which we claim factors through
$\CliffZ(q) \circ \CliffZ(q')$.  Indeed, for any section $v\tensor v'
\tensor f$ of $\EE_1 \tensor \EE_2$ and $z \tensor z'$ of $\CliffZ(q)
\tensor \CliffZ(q')$, we have that 
$$
(v \tensor v' \tensor f)\, (z \tensor z') = (v \cdot z) \tensor
(v' \cdot z') \tensor f = (\iota(z) * v) \tensor (\iota'(z') *
v') \tensor f = (\iota \tensor \iota')(z\tensor z')\; (v \tensor v' \tensor
f)
$$
by
Proposition~\ref{prop:properties_C_1}\ref{properties_C_1.semilinear},\ref{properties_C_1.m},
where we suppress the canonical embeddings \eqref{eq:C_0_injection},
\eqref{eq:C_1_injection} and the isomorphism
\eqref{eq:even_C_0_perp_isom}.  Hence $(\id - \iota \tensor \iota')
\CliffZ(q \perp q')$ annihilates $\EE \tensor \EE' \tensor \LL\dual$,
hence is zero.  This proves the claim.  The resulting $\OO_X$-algebra
morphism $\CliffZ(q \perp q') \to \CliffZ(q) \circ \CliffZ(q')$ is
Zariski locally an isomorphism by
\cite[IV~\S4.4]{knus:quadratic_hermitian_forms}, hence is an
isomorphism.
\end{proof}

As a consequence, for each line bundle $\LL$, the discriminant
invariant defines a homomorphism $e^1_\LL : I^1(X,\LL) \to
\Hfppf^1(X,\Z/2\Z)$.  By Proposition
\ref{prop:properties}\ref{prop:properties.proj}, $\{e^1_\LL\}$ is a
system of projective similarity class invariants and there is a
\linedef{total discriminant invariant} homomorphism
$$
e^1 : \Itot^1(X) \to \Hfppf^1(X,\Z/2\Z).
$$
Denote by $I^2(X,\LL) \subset I^1(X,\LL)$ the kernel of $e^1_\LL$ and
by $\Itot^2(X) = \oplus_{\LL \in P}I^2(X,\LL)$.  Then $\Itot^2(X)
\subset \ker(e^1)$.

\begin{remark}
The quotient group $\ker(e^1)/\Itot^2(X)$ is generated by elements of
the form $[\EE,q,\LL] - [\EE',q',\LL']$, where both $q$ and $q'$ have
equal \emph{nontrivial} discriminant invariant and yet $\LL$ and
$\LL'$ are in different square classes.  This group will be the
subject of future investigation.
\end{remark}

\subsection{Total Clifford invariant}
\label{subsec:total_Clifford}

For a regular quadratic form $(\EE,q,\LL)$ of even rank $n=2m$ and
trivial discriminant on $X$, the even Clifford algebra decomposes as a
product of Azumaya $\OO_X$-algebras $\CliffAlg_0(\EE,q,\LL) \isom
\CliffAlg_0^+(\EE,q,\LL) \times \CliffAlg_0^-(\EE,q,\LL)$ upon fixing
a splitting idempotent of the center $\CliffZ(\EE,q,\LL) \isom \OO_X
\times \OO_X$.

\begin{prop}
\label{prop:Cliff+-}
Let $X$ be a scheme with 2 invertible and $(\EE,q,\LL)$ be a regular
line bundle-valued quadratic form of rank $n=2m$ and trivial
discriminant.  Then $[\CliffAlg_0^+(\EE,q,\LL)] =
[\CliffAlg_0^-(\EE,q,\LL)]$ in $\Brtwo(X)$.
\end{prop}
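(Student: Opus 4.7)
The plan is to induce a Morita equivalence between the Azumaya $\OO_X$-algebras $\CliffAlg_0^\pm(\EE,q,\LL)$ via a decomposition of the Clifford bimodule $\CliffAlg_1(\EE,q,\LL)$ compatible with the splitting of the center coming from trivial discriminant. Fix complementary idempotents $e^\pm \in \CliffZ(\EE,q,\LL)$ so that $\CliffAlg_0^\pm = \CliffAlg_0 \cdot e^\pm$, where $\iota(e^+) = e^-$ for the nontrivial $\OO_X$-automorphism $\iota$ of $\CliffZ$; these are Azumaya $\OO_X$-algebras of degree $2^{m-1}$ by Proposition~\ref{prop:properties}(\ref{prop:properties.Azumaya}).

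Next, set $\CliffAlg_1^\pm = e^\pm * \CliffAlg_1$ using the left $\CliffZ$-action. By Proposition~\ref{prop:properties_C_1}(\ref{properties_C_1.semilinear}), the semilinearity $x \cdot z = \iota(z) * x$ forces $\CliffAlg_1^\pm = \CliffAlg_1 \cdot e^\mp$ as well, making $\CliffAlg_1^+$ a $(\CliffAlg_0^+,\CliffAlg_0^-)$-bimodule and $\CliffAlg_1^-$ a $(\CliffAlg_0^-,\CliffAlg_0^+)$-bimodule. Restricting the multiplication isomorphism $m : \CliffAlg_1 \otimes_{\CliffAlg_0} \CliffAlg_1 \isom \CliffAlg_0 \tensor \LL$ of Proposition~\ref{prop:properties_C_1}(\ref{properties_C_1.m}) to this decomposition, the $\CliffAlg_0$-bimodule compatibility causes the diagonal summands $\CliffAlg_1^\pm \otimes_{\CliffAlg_0} \CliffAlg_1^\pm$ to vanish (mismatched center actions), and one extracts $\CliffAlg_0$-bimodule isomorphisms
\begin{equation*}
\CliffAlg_1^+ \otimes_{\CliffAlg_0^-} \CliffAlg_1^- \isom \CliffAlg_0^+ \tensor \LL, \qquad \CliffAlg_1^- \otimes_{\CliffAlg_0^+} \CliffAlg_1^+ \isom \CliffAlg_0^- \tensor \LL.
\end{equation*}

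Twisting by $\LL\dual$ exhibits $\CliffAlg_1^+ \tensor \LL\dual$ and $\CliffAlg_1^-$ as mutually inverse invertible bimodules, giving a Morita equivalence between $\CliffAlg_0^+$ and $\CliffAlg_0^-$ over $\OO_X$, hence $[\CliffAlg_0^+] = [\CliffAlg_0^-]$ in $\Br(X)$. That this common class lies in $\Brtwo(X)$ follows from a short analysis of the canonical involution $\tau_0$ of Proposition~\ref{prop:properties}(\ref{prop:properties.injection}): either $\tau_0$ descends to involutions of each $\CliffAlg_0^\pm$ (if it fixes $e^\pm$), or it identifies $(\CliffAlg_0^+)\op$ with $\CliffAlg_0^-$ (if it swaps them); combined with the equality $[\CliffAlg_0^+] = [\CliffAlg_0^-]$, either case forces period dividing 2.

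The main subtle point will be the clean splitting of $m$ along the idempotent decomposition — verifying that the image of each $\CliffAlg_1^\pm \otimes_{\CliffAlg_0^\mp} \CliffAlg_1^\mp$ indeed lands in the $e^\pm \tensor e^\pm$ isotypic summand $\CliffAlg_0^\pm \tensor \LL$ of $\CliffAlg_0 \tensor \LL$. A rank comparison — each $\CliffAlg_1^\pm$ is locally free of $\OO_X$-rank $2^{2m-2}$, matching the rank of $\CliffAlg_0^\pm$ — together with Proposition~\ref{prop:properties_C_1}(\ref{properties_C_1.invertible}) then confirms that these bimodules are genuinely invertible progenerators on each side, validating the Morita argument. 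Everything can ultimately be checked étale-locally (where $\LL$ trivializes and the situation reduces to the classical split case of a quadratic form with trivial discriminant over a field).
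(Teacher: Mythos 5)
Your argument is correct, but it takes a genuinely different route from the paper. The paper splits into cases by the parity of $m$: for $m$ odd it uses that $\tau_0$ is unitary on the center, so $\CliffAlg_0^+(\EE,q,\LL) \isom \CliffAlg_0^-(\EE,q,\LL)\op$, and then imports $2$-torsionness from the \'etale-cohomological Tits algebra construction of \cite[Thm.~3.17]{auel:clifford}; for $m$ even it uses involutions of the first kind on each factor together with refined classes in $\Het^2(X,\muu_2)$ satisfying $[\CliffAlg_0^{+},\tau_0^{+}]+[\CliffAlg_0^{-},\tau_0^{-}]=c_1(\LL,\muu_2)$, which dies in $\Br(X)$. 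You instead prove the equality of Brauer classes purely algebraically: the idempotent decomposition $\CliffAlg_1 = \CliffAlg_1^+ \oplus \CliffAlg_1^-$ (forced by the semilinearity $x\cdot z = \iota(z)*x$), the vanishing $\CliffAlg_1^\pm \otimes_{\CliffAlg_0} \CliffAlg_1^\pm = 0$, and the restriction of the multiplication isomorphism $m$ exhibit $\CliffAlg_1^+$ as an invertible $(\CliffAlg_0^+,\CliffAlg_0^-)$-bimodule of the correct $\OO_X$-rank $2^{2m-2}$, whence $\CliffAlg_0^+ \tensor (\CliffAlg_0^-)\op \isom \EEnd(\CliffAlg_1^+)$ and $[\CliffAlg_0^+]=[\CliffAlg_0^-]$; then $2$-torsion follows from $\tau_0$ in either case (fixing or swapping the idempotents), without needing to know which parity produces which behavior. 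Notably, your decomposition of $\CliffAlg_1$ and the pairings you extract are exactly the ones the paper sets up later in the proof of Theorem~\ref{thm:Cliff_perp}, so your proof stays entirely inside the toolkit of Propositions~\ref{prop:properties} and \ref{prop:properties_C_1} and avoids the external cohomological input; what it gives up is the finer $\muu_2$-level information (the precise relation with $c_1(\LL,\muu_2)$) that the paper's method records. It also does not appear to use the invertibility of $2$ in any essential way, which is consonant with the remark following the Proposition that the statement should hold without that hypothesis.
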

\begin{proof}
For $m$ odd, the involution $\tau_0$ is of unitary type with respect
to the center (cf.\ \cite[Prop.~3.11]{auel:clifford}), hence induces
an isomorphism
\begin{equation}
\label{eq:Cliff_op}
\CliffAlg_0^+(\EE,q,\LL) \isom \CliffAlg_0^-(\EE,q,\LL)\op.
\end{equation} 
Hence it suffices to prove that $[\CliffAlg_0^\pm(\EE,q,\LL)]$ are
2-torsion in $\Br(X)$.  For this, we can appeal to the \'etale cohomological Tits algebra
construction of \cite[Thm.~3.17]{auel:clifford}.

For $m$ even, the involution $\tau_0$ is of the first kind and trivial
on the center, restricting to involutions $\tau_0^\pm$ of the first
kind on $\CliffAlg_0^\pm(\EE,q,\LL)$ (in particular, they have
2-torsion Brauer classes).  Thus, there exist refined classes
$[\CliffAlg_0^{\pm}(\EE,q,\LL),\tau_0^{\pm}]$ in $\Het^2(X,\muu_2)$
lifting the Brauer classes $[\CliffAlg_0^\pm(\EE,q,\LL)]$ in
$\Brtwo(X)$ and satisfying
$$
[\CliffAlg_0^{+}(\EE,q,\LL),\tau_0^{+}] +
[\CliffAlg_0^{-}(\EE,q,\LL),\tau_0^{-}] = c_1(\LL,\muu_2),
$$
see~\cite[\S2.8,~\S3.4]{auel:clifford}, where $c_1(\LL,\muu_2) \in
\Het^2(X,\muu_2)$ is the 1st Chern class arising from the coboundary
map of the Kummer squaring sequence.  In particular, we have
$[\CliffAlg_0^+(\EE,q,\LL)] = [\CliffAlg_0^-(\EE,q,\LL)]$ in
$\Brtwo(X)$ since 1st Chern classes are in the kernel of the natural
map $\Het^2(X,\muu_2) \to \Het^2(X,\Gm)$.
\end{proof}

The statement of Proposition~\ref{prop:Cliff+-} (and hence of
Theorem~\ref{thm:Cliff_perp}, below) should remain true without the
hypothesis that 2 is invertible on $X$.  In the setting of
Proposition~\ref{prop:Cliff+-}, we will write
$[\CliffAlg_0^{\pm}(q)] = [\CliffAlg_0^{\pm}(\EE,q,\LL)]$ for the Brauer class in question.

\begin{theorem}
\label{thm:Cliff_perp}
Let $X$ be a scheme with 2 invertible and $(\EE,q,\LL)$ and
$(\EE',q',\LL)$ be regular line bundle-valued quadratic forms of even
rank and trivial discriminant.  Then $[\CliffAlg_0^\pm(q \perp q')] =
[\CliffAlg_0^\pm(q)] + [\CliffAlg_0^\pm(q')]$ in $\Brtwo(X)$.
\end{theorem}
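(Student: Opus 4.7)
The plan is to combine Theorem~\ref{thm:Cliff_perp_formula} with the identification of central idempotents forced by the triviality of the discriminants, and then extract the Brauer class of the $+$-summand via a Peirce-plus-Morita argument. Since $\discs(q)=\discs(q')=0$, I can fix global orthogonal idempotents $e_\pm \in \CliffZ(q)$ and $e'_\pm \in \CliffZ(q')$ with $\iota(e_\pm) = e_\mp$. By Proposition~\ref{prop:disc_perp} and (importantly) its proof, $\CliffZ(q \perp q')$ identifies with the $\iota \tensor \iota'$-invariant subalgebra of $\CliffZ(q) \tensor \CliffZ(q')$, and its orthogonal idempotents are $f_\pm = e_\pm \tensor e'_\pm + e_\mp \tensor e'_\mp$. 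Under the $\OO_X$-algebra isomorphism $\CliffAlg_0(q \perp q') \isom A \oplus B$ of Theorem~\ref{thm:Cliff_perp_formula}, with $A = \CliffAlg_0(q) \tensor \CliffAlg_0(q')$ and $B = \CliffAlg_1(q) \tensor \CliffAlg_1(q') \tensor \LL\dual$, the algebra $R := \CliffAlg_0^+(q \perp q') = f_+ \cdot (A \oplus B)$ is thereby singled out.

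Next I introduce the auxiliary idempotents $f_{\pm\pm} = e_\pm \tensor e'_\pm \in A$; they lie in $R$, are orthogonal, and sum to the identity of $R$. A direct computation---hinging on the semi-linearity $e_\pm * x = x \cdot \iota(e_\pm) = x \cdot e_\mp$ from Proposition~\ref{prop:properties_C_1}\ref{properties_C_1.semilinear}---gives the Peirce decomposition $R = A_{++} \oplus B_{++} \oplus B_{--} \oplus A_{--}$, with diagonal corners $f_{\pm\pm} R f_{\pm\pm} = A_{\pm\pm} := \CliffAlg_0^\pm(q) \tensor \CliffAlg_0^\pm(q')$ (the $B$-contributions $f_{\pm\pm} B f_{\pm\pm}$ collapse because the two-sided action by $e_\pm$ twists into $e_\mp$ on the opposite side) and off-diagonal blocks $f_{\pm\pm} R f_{\mp\mp} = B_{\pm\pm} := \CliffAlg_1^\pm(q) \tensor \CliffAlg_1^\pm(q') \tensor \LL\dual$. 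Thus $R$ acquires the structure of a generalized $2 \times 2$ matrix algebra with Azumaya diagonal corners $A_{\pm\pm}$ and off-diagonal invertible bimodules $B_{\pm\pm}$.

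Finally, the restriction of the multiplication map $m$ of Proposition~\ref{prop:properties_C_1}\ref{properties_C_1.m} to $\CliffAlg_1^\mp(q) \tensor_{\CliffAlg_0^\pm(q)} \CliffAlg_1^\pm(q) \to \CliffAlg_0^\mp(q) \tensor \LL$ is a nonzero morphism of invertible $\CliffAlg_0^\mp(q)$-bimodules, hence an isomorphism; tensoring with the analogue for $q'$ and contracting against the $\LL\dual$-factors yields $B_{--} \cdot B_{++} = A_{--}$ inside $R$. It follows that $R f_{++} R = R$, so $f_{++}$ is a full idempotent and $R$ is Morita equivalent to the corner $f_{++} R f_{++} = A_{++}$, giving $[\CliffAlg_0^+(q \perp q')] = [A_{++}] = [\CliffAlg_0^+(q)] + [\CliffAlg_0^+(q')]$ in $\Brtwo(X)$. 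The analogous identity for $\CliffAlg_0^-$ is then deduced by applying Proposition~\ref{prop:Cliff+-} to all three forms. The principal obstacle is the Peirce bookkeeping of the middle step: because $\CliffZ$ acts on $\CliffAlg_1$ semi-linearly rather than $\OO_X$-linearly, the two-sided actions of the $f_{\pm\pm}$ do not split $B$ into the naive product of $\pm$-parts but intertwine them via $\iota$, and this twist must be tracked carefully to arrive at the clean block description that powers the Morita argument.
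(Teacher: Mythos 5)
Your proposal is correct and follows essentially the same route as the paper's proof: the paper likewise splits $\CliffAlg_0(q\perp q')$ using the idempotents $e\tensor e'+f\tensor f'$ and $e\tensor f'+f\tensor e'$, obtains the same generalized $2\times 2$ block structure with diagonal corners $\CliffAlg_0^\pm(q)\tensor\CliffAlg_0^\pm(q')$ and off-diagonal blocks built from the $\CliffAlg_1$-pieces, and then identifies $\CliffAlg_0^+(q\perp q')$ as an endomorphism algebra of a module over $\CliffAlg_0^+(q)\tensor\CliffAlg_0^+(q')$ --- which is exactly your Morita-equivalence-via-full-corner argument in different words --- before invoking Proposition~\ref{prop:Cliff+-}. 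The only minor caveats are that your ``nonzero morphism of invertible bimodules, hence an isomorphism'' step is more safely justified by restricting the isomorphism of Proposition~\ref{prop:properties_C_1}\ref{properties_C_1.m} along the idempotents (regularity, as the paper does), and that your $\pm$-labels on the off-diagonal blocks differ from the paper's by the $\iota$-twist convention, neither of which affects the argument.
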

\begin{proof}
Let $e,f$ be complementary central splitting idempotents
of $\CliffAlg_0(q)$, inducing an $\OO_X$-algebra decomposition
$$
\CliffAlg_0(q) = e \CliffAlg_0(q) \times f \CliffAlg_0(q) = \CliffAlg_0^+(q) \times \CliffAlg_0^-(q)
$$
and a corresponding decomposition
$$
\CliffAlg_1(q) = \CliffAlg_1(q) \cdot e \oplus \CliffAlg_1(q) \cdot f
= f \cdot \CliffAlg_1(q) \oplus e \cdot \CliffAlg_1(q)
= \CliffAlg_1^+(q) \oplus \CliffAlg_1^-(q).
$$
making $\CliffAlg_1^\pm(q)$ into a
$\CliffAlg_0^\mp(q)$-$\CliffAlg_0^\pm(q)$-bimodule via the
$\CliffAlg_0(q)$-bimodule structure on $\CliffAlg_1(q)$.  Local
calculations, using
Proposition~\ref{prop:properties_C_1}\ref{properties_C_1.semilinear},
shows that the map in
Propositions~\ref{prop:properties_C_1}\ref{properties_C_1.m} induces
pairings
\begin{equation}
\label{eq:C_1+-.m}
\CliffAlg_1^\pm(q) \times \CliffAlg_1^\mp(q) \to \CliffAlg_0^\mp(q)\tensor\LL.
\end{equation}
Similarly, $\CliffAlg_1^\pm(q)$ annihilates itself via the map in
Propositions~\ref{prop:properties_C_1}\ref{properties_C_1.m},
$\CliffAlg_0^\pm(q)$ and $\CliffAlg_0^\mp(q)$ annihilate each other
via the multiplication in $\CliffAlg_0(q)$, and the
$\CliffAlg_0^\pm(q)$-$\CliffAlg_0^\mp(q)$-bimodule structure on
$\CliffAlg_1^\pm(q)$ induces via the $\CliffAlg_0(q)$-bimodule
structure on $\CliffAlg_1(q)$, is zero.

Let $e',f'$ be complementary central splitting idempotents of
$\CliffAlg_0(q')$, as above.  Then $e \tensor e' + f \tensor f'$ and
$e\tensor f' + f \tensor e'$ (via the isomorphism
\eqref{eq:even_C_0_perp_isom}) are complementary central splitting
idempotents of $\CliffAlg_0(q \perp q')$, inducing a decomposition
$$
\CliffAlg_0(q \perp q') = (e \tensor e' + f \tensor f')
\CliffAlg_0(q\perp q')
\times (e\tensor f' + f \tensor e')\CliffAlg_0(q\perp q') = \CliffAlg_0^+(q\perp q') \times \CliffAlg_0^-(q\perp q').
$$ 
A direct local calculation, using the
$\CliffAlg_0^\mp(q)$-$\CliffAlg_0^\pm(q)$-bimodule structure on
$\CliffAlg_1^\pm(q)$, the pairings \eqref{eq:C_1+-.m}, and the
annihilation statements above, establishes the following block matrix
algebra structures
\begin{align*}
\CliffAlg_0^+(q \perp q') {} & =
\begin{pmatrix}
\CliffAlg_0^+(q)\tensor\CliffAlg_0^+(q') & \CliffAlg_1^-(q)\tensor\CliffAlg_1^-(q')\tensor\LL\dual\\
\CliffAlg_1^+(q)\tensor\CliffAlg_1^+(q')\tensor\LL\dual & \CliffAlg_0^-(q)\tensor\CliffAlg_0^-(q')
\end{pmatrix}\\
\CliffAlg_0^-(q \perp q') {} & =
\begin{pmatrix}
\CliffAlg_0^+(q)\tensor\CliffAlg_0^-(q') & \CliffAlg_1^-(q)\tensor\CliffAlg_1^+(q')\tensor\LL\dual\\
\CliffAlg_1^+(q)\tensor\CliffAlg_1^-(q')\tensor\LL\dual & \CliffAlg_0^-(q)\tensor\CliffAlg_0^+(q') 
\end{pmatrix}
\end{align*}
via the isomorphism \eqref{eq:even_C_0_perp_isom}.  The pairings
\eqref{eq:C_1+-.m} induce morphisms
$$
\CliffAlg_1^\mp(q) \isom \HHom_{\CliffAlg_0^\pm(q)}
\bigl(\CliffAlg_1^\pm(q),\CliffAlg_0^\pm(q)\bigr) \tensor \LL
$$ 
of $\CliffAlg_0^\pm(q)$-$\CliffAlg_0^\mp(q)$-bimodules (these are
right hom sheaves).  Regularity implies that these are isomorphisms,
with respect to which we have $\OO_X$-algebra isomorphisms
\begin{align*}
\CliffAlg_0^+(q \perp q') {} & = 
\EEnd_{\CliffAlg_0^+(q)\tensor\CliffAlg_0^+(q')}\bigl(
\CliffAlg_0^+(q)\tensor\CliffAlg_0^+(q') \oplus
\CliffAlg_1^+(q)\tensor\CliffAlg_1^+(q')\tensor\LL\dual \bigr)\\
\CliffAlg_0^-(q \perp q') {} & = 
\EEnd_{\CliffAlg_0^+(q)\tensor\CliffAlg_0^-(q')}\bigl(
\CliffAlg_0^+(q)\tensor\CliffAlg_0^-(q') \oplus
\CliffAlg_1^+(q)\tensor\CliffAlg_1^-(q')\tensor\LL\dual \bigr).
\end{align*}
In particular, $\CliffAlg_0^+(q \perp q')$ is Brauer equivalent to
$\CliffAlg_0^+(q)\tensor\CliffAlg_0^+(q')$ and $\CliffAlg_0^-(q \perp
q')$ is Brauer equivalent to
$\CliffAlg_0^+(q)\tensor\CliffAlg_0^-(q')$.  An application of
Proposition~\ref{prop:Cliff+-} finishes the proof.
\end{proof}

When 2 is invertible on $X$, then by Theorems~\ref{thm:metabolic} and
\ref{thm:Cliff_perp}, for each line bundle $\LL$ on $X$, the map
$[\EE,q,\LL] \mapsto [\CliffAlg_0^+(\EE,q,\LL)]$ for any choice of
central splitting idempotent, defines a homomorphism $e^2_\LL :
I^2(X,\LL) \to \Brtwo(X)$.  By
Proposition~\ref{prop:properties}\ref{prop:properties.proj}, $\{
e_\LL^2 \}$ is a system of projective similarity class invariants and
there is a \linedef{total Clifford invariant} homomorphism
\begin{equation}
\label{prop:e2}
e^2 : \Itot^2(X) \to \Brtwo(X).
\end{equation}

\begin{remark}
The invariant $e_{\OO_X}^2 : I^2(X) = I^2(X,\OO_X) \to \Brtwo(X)$
coincides with the classical Clifford invariant map.  Indeed, if
$(\EE,q)$ is a regular $\OO_X$-valued quadratic form of even rank and
trivial discriminant then $\CliffAlg_0^+(\EE,q)$ is Brauer equivalent
to the full Clifford algebra $\CliffAlg(\EE,q)$.  See
also~\cite[Thm.~2.10{\it b}]{auel:clifford}.  It was already proved in
\cite{knus_ojanguren:metabolic} that the full Clifford algebra yields
a homomorphism $W(X) \to \Brtwo(X)$.
\end{remark}

\section{Surjectivity of the total Clifford invariant}
\label{sec:surjectivity}

The goal of this section is to prove Theorem~\ref{thm:A}.  Recall that
an Azumaya algebra $\AA$ over a scheme $X$ has $\OO_X$-rank $d^2$ for
a positive integer $d$ called the \linedef{degree}.  The
\linedef{period} of $\AA$ is the order of the Brauer class $[\AA] \in
\Br(X)$.  The \linedef{index} of $\AA$ is the greatest common divisor
of all degrees of Azumaya algebras $\BB$ such that $\AA \tensor
\EEnd\PP \isom \BB \tensor \EEnd\sheaf{Q}$ for vector bundles $\PP$
and $\sheaf{Q}$ on $X$.  If $X$ is integral with function field $K$,
the \linedef{generic index} of $\AA$ is the index of the central
simple $K$-algebra $\AA_K$.  The generic index divides the index, with
equality if $X$ is regular of dimension $\leq 2$.  We will assume that
2 is invertible on $X$.

\subsection{Exceptional isomorphisms}

The exceptional isomorphisms of Dynkin diagrams $\Dynkin{A}_1^2 =
\Dynkin{D}_2$ and $\Dynkin{A}_3 = \Dynkin{D}_3$ have beautiful
reverberations in the theory of quadratic forms of rank 4 and 6,
respectively.  In these ranks, the \linedef{reduced norm} and
\linedef{reduced pfaffian} constructions enable a quadratic form to be
reconstructed from its even Clifford algebra (together with certain
data).  For quadratic forms over rings, this theory was initiated by
Kneser, Knus, Ojanguren, Parimala, Paques, and Sridharan, see
\cite{kneser:composition_binary},
\cite{knus_ojanguren_sridharan:quadratic_azumaya},
\cite{knus_paques:rank_4}, \cite{knus:pfaffians_and_quadratic_forms},
\cite{knus_parimala_sridharan:rank_4},
\cite{knus_parimala_sridharan:rank_6_via_pfaffians}.  Now, a standard
reference on this work is
Knus~\cite[Ch.~V]{knus:quadratic_hermitian_forms}.  Over fields, a
wonderful reference is~\cite[IV~\S15]{book_of_involutions}.  Bichsel
\cite{bichsel:thesis} and Bichsel--Knus
\cite{bichsel_knus:values_line_bundles} provide an extension of this
theory to line bundle-valued forms over rings.  The existing theory
over rings immediately generalizes to base schemes when the
corresponding algebraic groups are of inner type (i.e., the case of
trivial discriminant).  For an approach over general bases using
Severi--Brauer schemes, see
\cite{parimala_sridharan:norms_and_pfaffians}.  In the case of general
discriminant, the details are worked out in \cite[\S5]{auel:clifford}.

We now outline the main results of this theory that we need.  For even
$n=2m$, denote by $\PQF_n^+(X)$ the set of projective similarity
classes of regular line bundle-valued quadratic forms of rank $n$ and
trivial discriminant on $X$.  Denote by $\Aztwo_d(X)$ the set of
isomorphism classes of Azumaya $\OO_X$-algebras of degree $d$ and
period 2.

For ease of exposition, and without loss of generality, we can assume
that $X$ is connected.  The assignment, sending the projective
similarity class of a quadratic form $(\EE,q,\LL)$ of even rank $n=2m$
and trivial discriminant to the unordered pair consisting of the
$\OO_X$-algebra isomorphism classes of the components
$\CliffAlg_0^+(\EE,q,\LL)$ and $\CliffAlg_0^-(\EE,q,\LL)$ of the even
Clifford algebra (for some central splitting idempotent), yields a
well defined map
\begin{equation}
\label{eq:CliffAlg_map}
\PQF_n^+(X) \to \Aztwo_{2^{m-1}}^{(2)}(X)
\end{equation}
where $\{-\}^{(2)}$ denotes the set of unordered pairs of elements.

For any odd $k$, denote by $\Aztwo_{2^k}'(X) \subset
\Aztwo_{2^k}^{(2)}(X)$ the subset of pairs of Brauer equivalent
Azumaya algebras.  For any even $k$, denote by $\Aztwo_{2^k}'(X)$ the
set of equivalence classes of Azumaya algebras of degree $2^k$ and
period 2 under the relation $\AA \sim \BB$ if $\AA \isom \BB$ or $\AA
\isom \BB\op$.  Then for even $k$, there is a canonical injective map
$\Aztwo_{2^k}'(X) \to \Aztwo_{2^k}^{(2)}(X)$ given by $\AA \mapsto
(\AA,\AA\op)$.

For $n \equiv 0 \bmod 4$, recall that $\CliffAlg_0^+(\EE,q,\LL)$ is
Brauer equivalent to $\CliffAlg^-(\EE,q,\LL)$ by
Proposition~\ref{prop:Cliff+-}. 
For $n \equiv 2 \bmod 4$, recall that $\CliffAlg_0^+(\EE,q,\LL)
\isom \CliffAlg_0^-(\EE,q,\LL)\op$ by \eqref{eq:Cliff_op}. 
Hence \eqref{eq:CliffAlg_map} factors through a map
\begin{equation}
\label{eq:CliffAlg_map_refined}
\CliffAlg_0^{\pm} : \PQF_n^+(X) \to \Aztwo_{2^{m-1}}'(X).
\end{equation}
The main result is that for $n = 4$ and $n=6$, the map
\eqref{eq:CliffAlg_map_refined} is a bijection, with inverse map
realized, respectively, by the reduced norm and pfaffian construction
outlined in \cite{knus_parimala_sridharan:rank_6_via_pfaffians},
\cite[V.4--5]{knus:quadratic_hermitian_forms},
\cite{parimala_sridharan:norms_and_pfaffians}.  We now proceed to
summarize these constructions.

\subsubsection*{Reduced norm form}
In the $n=4$ case, given a pair of Brauer equivalent Azumaya
quaternion algebras $\AA$ and $\BB$, fibered Morita theory (cf.\
Lieblich~\cite[\S2.1.4]{lieblich:thesis} or Kashiwara--Schapira
\cite[\S19.5]{kashiwara_schapira:categories_sheaves}) provides a
$\AA$-$\BB$-bimodule $\PP$, which is invertible over $\AA$ and $\BB$
and is unique up to tensoring by a line bundle.  Descending the
reduced norm via \'etale splittings of $\AA$ and $\BB$, there exists a
\linedef{reduced norm form} $\NN(\PP) = (\PP,q_{\PP},\NN_{\PP})$,
consisting of line bundle $\NN_\PP$ and a regular quadratic form
$q_\PP : \PP \to \NN_\PP$ satisfying $q_\PP(a \cdot p \cdot b) =
\Nrd_\AA(a)\, q_{\PP}(p)\, \Nrd_\BB(b)$ for sections $a$ of $\AA$, $b$
of $\BB$, and $p$ of $\PP$, where $\Nrd_\AA : \AA \to \OO_X$ is the
classical reduced norm.  Tensoring $\PP$ by a line bundle induces
a projective similarity of reduced norm forms.  Also, $\PP$ is a
$\BB$-$\AA$-bimodule by composing each action with the standard
involution, giving rise to the same reduced norm form, hence we can
freely exchange the role of $\AA$ and $\BB$.

\subsubsection*{Reduced pfaffian form}
In the $n=6$ case, given an Azumaya algebra $\AA$ of degree 4 and
period 2, there exists a vector bundle $\PP$ of rank 16, unique up to
tensoring by a line bundle, and an $\OO_X$-algebra isomorphism $\vp :
\AA \tensor \AA \isom \EEnd(\PP)$.  The reduced trace, considered as
an element of $\EEnd \AA \isom \AA\op \tensor \AA$, is mapped via
$\vp$ to an involutory $\OO_X$-module endomorphism $\psi : \PP \to
\PP$.  The subsheaf $A_\psi(\PP) = \im(\id_\PP - \psi)$ of alternating
elements with respect to $\psi$ is a vector bundle of rank $6$, as can
be checked \'etale locally.  Descending the pfaffian map via \'etale
splitting of $\AA$ and $\PP$, there exists a \linedef{reduced pfaffian
form} $\Pf(\PP) = (A_\psi(\PP),\pf_\PP,\Pf_\PP)$, consisting of a line
bundle $\Pf_\PP$ and a regular quadratic form $\pf_\PP : A_\psi(\PP)
\to \Pf_\PP$. Tensoring $\PP$ by a line bundle tensors $A_\psi(\PP)$
by the square of the line bundle, inducing a projective similarity of
reduced pfaffian forms.  Exchanging $\AA$ with $\AA\op$ replaces $\PP$
by $\PP\dual$ and $\psi$ by $\psi\dual$, giving rise to isomorphisms
$A_{\psi\dual}(\PP\dual) \isom A_\psi(\PP)\dual$ and $\Pf_{\PP\dual}
\isom (\Pf_\PP)\dual$ (cf.\
\cite[III~Lemma~9.3.5]{knus:quadratic_hermitian_forms}) and a
projective similarity of reduced pfaffian forms $\Pf(\PP)$ and
$\Pf(\PP\dual)$ (cf.\
\cite[III~Prop.~9.4.2]{knus:quadratic_hermitian_forms}).

\begin{theorem}
\label{thm:low_dim}
Let $X$ be a scheme with 2 invertible.
\begin{enumerate}
\item \label{thm:low_dim.4}There are inverse bijections
$$
\xymatrix@C=40pt{
\PQF_4^+(X) \ar@<1mm>[r]^(.5){\CliffAlg_0^{\pm}} & 
\ar@<1mm>[l]^(.52){\NN} \Aztwo_2'(X)
}
$$
where $\NN$ is the reduced norm form construction.

\item \label{thm:low_dim.6} There are inverse bijections 
$$
\xymatrix@C=40pt{
\PQF_6^\pm(X) \ar@<1mm>[r]^(.54){\CliffAlg_0^+} & 
\ar@<1mm>[l]^(.48){\Pf} \Aztwo_4'(X)
}
$$
where $\Pf$ is the reduced pfaffian form construction.
\end{enumerate}
\end{theorem}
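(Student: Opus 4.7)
The plan is to verify well-definedness of both reverse maps $\NN$ and $\Pf$ on projective similarity classes, and then to verify that the compositions with $\CliffAlg_0^\pm$ are the identity in both directions. Since the construction of the even Clifford algebra (Proposition~\ref{prop:properties}\ref{prop:properties.functorial}), the formation of Morita bimodules, the reduced norm, and the reduced pfaffian are all compatible with \'etale (indeed flat) pullback, I can reduce every identity to the affine (and in fact strictly henselian local) case, where the statement is the classical theorem of Knus--Ojanguren--Paques--Parimala--Sridharan recorded in \cite[V.4--5]{knus:quadratic_hermitian_forms} and extended to line bundle-valued forms on affine schemes in \cite{bichsel_knus:values_line_bundles}.

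For \ref{thm:low_dim.4}, I would first check that the assignment $(\AA,\BB) \mapsto \NN(\PP)$ is well defined: the invertible $\AA$-$\BB$-bimodule $\PP$ is unique up to tensoring by a line bundle $\NN_0$, and a direct computation shows that the reduced norm form of $\NN_0\tensor\PP$ is $(\NN_0 \tensor \PP,\, q_{\NN_0}\tensor q_\PP,\, \NN_0^{\tensor 2}\tensor \NN_\PP)$, which is related to $\NN(\PP)$ by the quadratic alignment $(\NN_0,\id)$; exchanging $\AA$ and $\BB$ also produces the same projective similarity class via the standard involutions. For the composition $\CliffAlg_0^\pm \circ \NN$, given $(\AA,\BB)$ I would form the reduced norm form $(\PP,q_\PP,\NN_\PP)$ and build a canonical $\OO_X$-algebra map from $\CliffAlg_0(q_\PP)$ to $\AA\times\BB$ by using the universal property (Proposition~\ref{prop:universal_C0}), sending a local section $p\tensor p'\tensor f$ to the pair constructed from the left and right actions of $\AA$ and $\BB$ on $\PP$; Zariski locally this recovers the classical identification, hence the induced map is an isomorphism. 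The reverse composition $\NN \circ \CliffAlg_0^\pm$ is handled symmetrically: given $(\EE,q,\LL)$, the module $\EE$ carries a canonical $\CliffAlg_0^+(q)$-$\CliffAlg_0^-(q)$-bimodule structure (via the pairings \eqref{eq:C_1+-.m} restricted to one summand and the embedding $i$), making it Morita-invertible, and $q$ satisfies the reduced-norm compatibility \'etale locally by the classical $\Dynkin{D}_2 = \Dynkin{A}_1^2$ calculation, so it must be projectively similar to $\NN(\CliffAlg_0^\pm(q))$.

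For \ref{thm:low_dim.6}, well-definedness of $\Pf$ requires noting that replacing $\PP$ by $\NN_0\tensor\PP$ replaces $A_\psi(\PP)$ by $\NN_0\tensor A_\psi(\PP)$ and $\Pf_\PP$ by $\NN_0^{\tensor 2}\tensor \Pf_\PP$ (the involutory $\psi$ being unaffected since it comes from the reduced trace element in $\AA\tensor\AA$), producing a quadratic alignment; the isomorphism $\AA\isom\AA\op$ induced by any orthogonal involution on $\AA$ (which exists \'etale locally) shows the pairs $(\AA,\AA\op)$ do map to the same class. For the composition $\CliffAlg_0^+ \circ \Pf$, I would use Proposition~\ref{prop:universal_C0} to produce a natural $\OO_X$-algebra morphism $\CliffAlg_0^+(\pf_\PP) \to \AA$, noting that the left and right actions of $\AA$ on itself descend to endomorphisms of $A_\psi(\PP)$ with the correct Clifford relations; this map is an isomorphism \'etale locally by the classical theorem and hence globally. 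For $\Pf \circ \CliffAlg_0^+$, writing $\AA = \CliffAlg_0^+(\EE,q,\LL)$, I would identify $\EE$ inside $A_\psi(\PP)$ for a canonical choice of $\PP$ constructed from the Clifford bimodule $\CliffAlg_1(\EE,q,\LL)$ using Proposition~\ref{prop:properties_C_1}\ref{properties_C_1.m}, and again verify the match locally.

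The main obstacle is the careful bookkeeping of line bundle twists: throughout, one must verify that each local identification is globally $\OO_X$-linear up to a canonical quadratic alignment, so that the resulting equality holds in $\PQF_n^+(X)$ rather than merely in the set of similarity classes. This is precisely the content of the ``projective'' in the statement, and is where the line bundle-valued formalism of \S\ref{subsec:total} — rather than a straight transcription of the classical theory — is needed. Once this bookkeeping is in place, all the other steps are either universal property applications (Propositions~\ref{prop:universal_C0} and \ref{prop:universal_C1}) or direct invocations of the classical affine result, which holds Zariski (and thus \'etale) locally by \cite[Ch.~V]{knus:quadratic_hermitian_forms} and \cite{bichsel_knus:values_line_bundles}.
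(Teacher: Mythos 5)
Your architecture is genuinely different from the paper's. The paper does not verify both composites: it quotes \cite[Prop.~4.1]{parimala_sridharan:norms_and_pfaffians} (generalizing \cite[Prop.~4.5]{bichsel_knus:values_line_bundles}), which gives $\CliffAlg_0(\NN(\PP))\isom\AA\times\EEnd_\AA(\PP)$, so that $\CliffAlg_0^{\pm}\circ\NN$ is the identity on $\Aztwo_2'(X)$, and then quotes the surjectivity of $\NN$ onto $\PQF_4^+(X)$ from \cite[Prop.~4.3]{parimala_sridharan:norms_and_pfaffians}; a one-sided inverse plus surjectivity of $\NN$ already forces the two maps to be inverse bijections. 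The rank~6 case is handled identically via \cite[Prop.~4.8]{bichsel_knus:values_line_bundles} and \cite[Prop.~6.1]{parimala_sridharan:norms_and_pfaffians}. Your plan instead re-derives these inputs from scratch (universal property of $\CliffAlg_0$ plus \'etale-local comparison with the classical affine theory). That is viable in outline---it is essentially how the quoted propositions are proved---but the hard content of the surjectivity citations reappears in your composites $\NN\circ\CliffAlg_0^{\pm}$ and $\Pf\circ\CliffAlg_0^{+}$, and those are exactly the places where your sketch is thinnest.

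Two steps, as written, are genuine gaps. First, your well-definedness of $\Pf$ under $\AA\sim\AA\op$ is justified by ``the isomorphism $\AA\isom\AA\op$ induced by any orthogonal involution on $\AA$ (which exists \'etale locally)''. But $\AA$ need not be isomorphic to $\AA\op$ globally---this is precisely why the relation $\AA\sim\AA\op$ is built into the definition of $\Aztwo_4'(X)$---and projective similarity of $\Pf(\AA)$ and $\Pf(\AA\op)$ cannot be certified \'etale-locally, since locally similar forms need not be globally projectively similar. The correct argument is global: passing from $\AA$ to $\AA\op$ replaces $(\PP,\psi)$ by $(\PP\dual,\psi\dual)$, and the canonical isomorphisms $A_{\psi\dual}(\PP\dual)\isom A_\psi(\PP)\dual$ and $\Pf_{\PP\dual}\isom(\Pf_\PP)\dual$ of \cite[III~Lemma~9.3.5, III~Prop.~9.4.2]{knus:quadratic_hermitian_forms} furnish the required projective similarity. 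Second, in the step ``$q$ satisfies the reduced-norm compatibility \'etale locally \dots so it must be projectively similar to $\NN(\CliffAlg_0^{\pm}(q))$'', local agreement by itself proves nothing about the global class. What makes this work is that the bimodule is global (make it precise: it is the image of $i(\EE)\subset\CliffAlg_1(q)$ in one of the two components cut out by the central idempotents; the bimodule structure comes from the $\CliffAlg_0(q)$-action on $\CliffAlg_1(q)$, not from the pairings \eqref{eq:C_1+-.m}, which instead supply the norm), together with the observation that any regular form on $\PP$ satisfying $q(a\cdot p\cdot b)=\Nrd_\AA(a)\,q(p)\,\Nrd_\BB(b)$ is locally a uniquely determined unit multiple of $q_\PP$, so these multipliers glue to an isomorphism $\LL\isom\NN_\PP$ realizing an actual similarity. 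Spelling out those two points (and the analogous gluing for $\Pf\circ\CliffAlg_0^{+}$ in rank~6) closes your route; without them the argument is incomplete.
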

\begin{proof}
Given Brauer equivalent Azumaya algebras $\AA$ and $\BB$, there exists
an invertible $\AA$-$\BB$-bimodule $\PP$ such that $\BB \isom
\EEnd_\AA(\PP)$. 
Hence for $m$ even (e.g., $m=2$), $\Aztwo_{2^{m-1}}'(X)$ is in
bijection with the set of isomorphism classes of pairs $(\AA,\PP)$,
consisting of an Azumaya algebra $\AA$ of degree $n$ and an invertible
right $\AA$-module $\PP$.  A direct proof of \ref{thm:low_dim.4} can
be deduced
from~\cite[Prop.~4.1]{parimala_sridharan:norms_and_pfaffians} (itself
a generalization
of~\cite[Prop.~4.5]{bichsel_knus:values_line_bundles}), which states
that if $\AA$ is an Azumaya quaternion algebra and $\PP$ is an
invertible right $\AA$-module, then $\CliffAlg_0(\NN(\PP)) \isom \AA
\times \EEnd_{\AA}(\PP)$.  By
\cite[Prop.~4.3]{parimala_sridharan:norms_and_pfaffians}, the map
$\NN$ is surjective.  Hence $\NN$ and $\CliffAlg_0^{\pm}$ are inverse
bijections.  This is a generalization
of~\cite[Thm.~10.7]{knus_parimala_sridharan:rank_6_via_pfaffians} to
the line bundle-valued (trivial discriminant) setting.

A direct proof of \ref{thm:low_dim.6} can be given along similar
lines. By~\cite[Prop.~4.8]{bichsel_knus:values_line_bundles} (which
immediately generalizes to general base schemes), if $\AA$ is an
Azumaya $\OO_X$-algebra of degree 4, $\PP$ is a locally free
$\OO_X$-module of rank 16, and $\vp : \AA \tensor \AA \to \EEnd(\PP)$
is an $\OO_X$-algebra isomorphism (corresponding to the element $[\AA]
\in \Aztwo_4'(X)$), then $\CliffAlg_0(\Pf(\PP)) \isom \AA\op \times
\EEnd_{\AA\op}(\PP)$.  By
\cite[Prop.~6.1]{parimala_sridharan:norms_and_pfaffians}, the map
$\Pf$ is surjective.  Hence $\Pf$ and $\CliffAlg_0^\pm$ are inverse
maps.  This is a generalization
of~\cite[Thm.~9.4]{knus_parimala_sridharan:rank_6_via_pfaffians} to
the line bundle-valued (trivial discriminant) setting.
\end{proof}

As a result, we can realize any Azumaya algebra of degree dividing 4
on $X$ as the even Clifford invariant of a line bundle-valued
quadratic form.  In particular, if $\Brtwo(X)$ is generated by such
Azumaya algebras, then the total Clifford invariant is surjective.

\begin{cor}
\label{thm:main1}
Let $X$ be a scheme with 2 invertible.  
If $\Brtwo(X)$ is generated by Azumaya algebras of degree $\leq
4$, then the total Clifford invariant
$$
e^2 : \Itot^2(X) \to \Brtwo(X)
$$ 
is surjective. 
\end{cor}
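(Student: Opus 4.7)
The plan is to combine the bijections of Theorem~\ref{thm:low_dim} with the additivity of the total Clifford invariant (which is built into the direct-sum definition of $\Itot^2(X)$ together with Theorem~\ref{thm:Cliff_perp}). Given $\beta \in \Brtwo(X)$, the hypothesis allows one to write $\beta = \sum_{i=1}^k [\AA_i]$ with each $\AA_i$ an Azumaya algebra of degree at most $4$. Since the period of any Azumaya algebra divides its degree, an $\AA_i$ of odd degree has Brauer class of odd order, which contributes nothing to a $2$-torsion sum and may be discarded. Hence one can reduce to the case where each $\deg \AA_i \in \{2,4\}$.

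Next, for each such $\AA_i$ I would use Theorem~\ref{thm:low_dim} to produce a line bundle-valued form representing $[\AA_i]$. If $\deg \AA_i = 2$, the pair $(\AA_i, \AA_i) \in \Aztwo_2'(X)$ is, by the bijectivity in Theorem~\ref{thm:low_dim}\ref{thm:low_dim.4}, the image under $\CliffAlg_0^{\pm}$ of some regular rank-$4$ form $(\EE_i, q_i, \LL_i)$ with trivial discriminant; in particular $e^2_{\LL_i}(q_i) = [\AA_i]$. If $\deg \AA_i = 4$, then Theorem~\ref{thm:low_dim}\ref{thm:low_dim.6} analogously yields a regular rank-$6$ form $(\EE_i, q_i, \LL_i)$ with trivial discriminant and $e^2_{\LL_i}(q_i) = [\AA_i]$. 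In both cases the form has even rank and trivial discriminant, hence lies in $I^2(X, \LL_i)$.

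Finally, I would assemble these forms into a single element of $\Itot^2(X) = \bigoplus_{\LL \in P} I^2(X, \LL)$. For each $i$, a quadratic alignment from $\LL_i$ to its representative $\LL_i' \in P$ transports $(\EE_i, q_i, \LL_i)$ to a class $q_i' \in I^2(X, \LL_i')$ with the same Clifford invariant, by Proposition~\ref{prop:properties}\ref{prop:properties.proj}. Summing across $i$ inside $\Itot^2(X)$ and applying $e^2 = \oplus_{\LL \in P}\, e^2_\LL$ then recovers $\sum_i [\AA_i] = \beta$. The only mildly nontrivial step is the dismissal of odd-degree summands; once that is noted, the argument is pure bookkeeping driven by Theorem~\ref{thm:low_dim}, and I do not anticipate any serious obstacle.
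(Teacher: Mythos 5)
Your argument is correct and is essentially the paper's own: Theorem~\ref{thm:low_dim} realizes each degree-2 or degree-4 generator as the Clifford invariant of a rank-4 (reduced norm) or rank-6 (reduced pfaffian) line bundle-valued form of trivial discriminant, and surjectivity follows since $e^2$ is a homomorphism on $\Itot^2(X)$. Your extra remarks (discarding odd-degree generators, whose $2$-torsion classes must be trivial since period divides degree, and transporting forms to the chosen representatives in $P$ via quadratic alignments) are just the bookkeeping the paper leaves implicit.
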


Note that if $X$ is the spectrum of a field, then $\Brtwo(X)$ is always
generated by quaternion algebras by Merkurjev's theorem, hence the
hypotheses of Corollary~\ref{thm:main1} are quite global in nature.

In the same spirit, we can give a stronger condition sufficient for
the surjectivity of the classical Clifford invariant $e^2_{\OO_X} :
I^2(X) \to \Brtwo(X)$.  First we recall some results from
\cite{knus_parimala_sridharan:rank_6_via_pfaffians}.  Let $[\AA] \in
\Aztwo_4(X)$ have reduced pfaffian form
$(A_\psi(\PP),\pf_\PP,\Pf_\PP)$, choosing a vector bundle $\PP$ of
rank 16 such that $\AA\tensor\AA \isom \EEnd\PP$.  The class
$d_0(\AA)=[\Pf_\PP] \in \Pic(X)/2$ is a well defined invariant of
$\AA$,
see~\cite[\S9,~p.~213]{knus_parimala_sridharan:rank_6_via_pfaffians}.
When $d_0(\AA)$ is trivial we say that $\AA$ has \linedef{trivial
pfaffian invariant}.  

\begin{prop}[{\cite[Prop.~3.2]{knus_parimala_sridharan:rank_6_via_pfaffians}}]
\label{prop:inv}
Let $X$ be a scheme with 2 invertible and $\AA \in \Aztwo_4(X)$.  If
$\AA$ has an involution of the first kind then $d_0(\AA)$ is
2-torsion.  Moreover, if $\AA$ has a symplectic involution then
$d_0(\AA)$ is trivial.
\end{prop}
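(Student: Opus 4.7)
The plan is to use the involution to single out a canonical choice of the vector bundle $\PP$, then identify the pfaffian line bundle \'etale locally with the determinant of an auxiliary rank 4 bundle, and finally descend a canonical trivialization of its square (respectively of itself).

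Given an involution $\sigma$ of the first kind on $\AA$, the composition
$$
\vp : \AA \tensor_{\OO_X} \AA \xrightarrow{\id \tensor \sigma} \AA \tensor_{\OO_X} \AA\op \isom \EEnd_{\OO_X}(\AA),
$$
with the second map being the standard $a \tensor b\op \mapsto (x \mapsto axb)$, exhibits $\PP = \AA$ as a rank 16 bundle with $\AA \tensor \AA \isom \EEnd \PP$. The first substantive step is to verify that with this $\vp$ the involution $\psi : \PP \to \PP$ (the image of the reduced trace element under $\vp$) corresponds to $\sigma$ itself. This can be checked \'etale locally: writing $\AA = \EEnd V$ for a rank 4 bundle $V$, the isomorphism $V \isom V\dual$ produced by $\sigma$ identifies $\AA = V \tensor V\dual$ with $V \tensor V$, and under this identification $\psi$ becomes the tensor swap. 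Consequently $A_\psi(\AA) \isom \exterior^2 V$ has rank 6, and the pfaffian form $\pf_\AA : \exterior^2 V \to \Pf_\AA$ is the classical squaring $\eta \mapsto \eta \wedge \eta \in \exterior^4 V = \det V$, so that \'etale locally $\Pf_\AA \isom \det V$.

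For the 2-torsion assertion, the isomorphism $V \isom V\dual$ furnished by $\sigma$ gives a local isomorphism $\det V \isom (\det V)\dual$, whose tensor with itself is a canonical trivialization $(\det V)^{\tensor 2} \isom \OO_X$. Being constructed directly from $\sigma$, this trivialization is independent of the local splitting $\AA = \EEnd V$, and so descends to a canonical global isomorphism $\Pf_\AA^{\tensor 2} \isom \OO_X$; hence $d_0(\AA) \in \Pic(X)/2$ is represented by the 2-torsion line bundle $\Pf_\AA$. In the symplectic case, the alternating form $\omega \in \exterior^2 V\dual$ encoding $\sigma$ produces a canonical nowhere-vanishing section $\omega \wedge \omega$ of $(\det V)\dual$, hence a canonical trivialization $\det V \isom \OO_X$, which again descends to $\Pf_\AA \isom \OO_X$ globally.

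The main obstacle is the careful identification of $\psi$ with $\sigma$: it requires tracing the reduced trace (viewed as the Goldman-type element) through the isomorphism $\EEnd_{\OO_X}(\AA) \isom \AA\op \tensor \AA$ composed with $\id \tensor \sigma$, and then checking that the canonical local trivializations of $(\det V)^{\tensor 2}$ and (in the symplectic case) of $\det V$ are invariant under the transition functions among the different local splittings $\AA = \EEnd V$. Once this local picture is pinned down, the global triviality claims follow formally from the classical pfaffian--determinant identity $\pf^2 = \det$.
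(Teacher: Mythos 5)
First, a remark on the comparison: the paper does not prove this proposition at all --- it is imported verbatim from \cite[Prop.~3.2]{knus_parimala_sridharan:rank_6_via_pfaffians} --- so there is no internal proof to measure you against. Your strategy is the natural one: use $\sigma$ to take $\PP=\AA$ via $\vp=\mathrm{Sand}\circ(\id\tensor\sigma)$, check \'etale-locally (writing $\AA=\EEnd(V)$ and $\sigma=\sigma_b$) that under the identification $\AA\isom V\tensor V$ furnished by $\hat b:V\to V\dual$ the involution $\psi$ is the swap, so that $A_\psi(\AA)\isom\exterior^2V$ and $\Pf_\AA\isom\det V$ locally. These local computations are correct (one nitpick: as an endomorphism of $\AA$ one finds $\psi=\sigma$ in the orthogonal case but $\psi=-\sigma$ in the symplectic case; it is only under the $b$-identification that $\psi$ is the swap in both cases, which is what you actually use).

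The gap is in the descent step, and the justification you give for it does not hold as stated. The involution $\sigma$ determines $b$ (hence $\hat b$, hence $\omega$) only up to a unit scalar, and $V$ exists only \'etale-locally with transition data in $\PGL_4$; so neither $\det V$, nor your trivialization of $(\det V)^{\tensor 2}$ by $\det\hat b$, nor the section $\omega\wedge\omega$, is ``independent of the local splitting'' --- rescaling $b\mapsto\lambda b$ changes them, and there is no global $\det V$ to trivialize in the first place. What is canonical, and what must actually be verified, is the \emph{composite} trivialization of $\Pf_\AA^{\tensor 2}$ (resp.\ $\Pf_\AA$) obtained by combining the $b$-dependent isomorphism $\Pf_\AA\isom\det V$ with the $b$-dependent trivialization of $(\det V)^{\tensor 2}$ (resp.\ $\det V$): the two dependences cancel. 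Concretely, in these local trivializations the transition functions of $\Pf_\AA$ are $\mu_{ij}^{-2}\det g_{ij}$, where the $g_{ij}$ are similitudes of $(V,b)$ with multipliers $\mu_{ij}$; the identity $\det g=\mu^{2}$ for symplectic similitudes in rank $4$ (from $\pf(g^{t}Bg)=\det g\cdot\pf(B)$) gives $\Pf_\AA\isom\OO_X$, while for orthogonal similitudes one only has $\det g=\pm\mu^{2}$, giving a cocycle valued in $\pm1$ and hence $\Pf_\AA^{\tensor 2}\isom\OO_X$. You do flag this verification in your closing paragraph, but it is precisely the content of the proposition and is not carried out, and the one-sentence reason offered in its place (canonicity of the trivialization of $(\det V)^{\tensor 2}$ alone) is not correct. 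With the cocycle computation above the argument closes, so the gap is real but fixable along exactly the lines you indicate.
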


We recall that any Azumaya quaternion algebra has a standard
symplectic involution, hence has trivial pfaffian invariant.

\begin{cor}
\label{thm:main2}
Let $X$ be a scheme with 2 invertible.  If $\Brtwo(X)$ is generated by
Azumaya algebras of degree dividing 4 with trivial pfaffian invariant,
then the classical Clifford invariant
$$
e_{\OO_X}^2 : I^2(X) \to \Brtwo(X)
$$ 
is surjective.  In particular this is the case if $\Brtwo(X)$ is
generated by Azumaya quaternion algebras.
\end{cor}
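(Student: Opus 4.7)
The plan is to refine the proof of Corollary~\ref{thm:main1}, adjusting each step so that the constructed quadratic form is genuinely $\OO_X$-valued (rather than merely line bundle-valued). Given $\alpha \in \Brtwo(X)$, write $\alpha = \sum_{i=1}^{k} [\AA_i]$ where each $\AA_i$ is an Azumaya algebra of degree dividing $4$ with trivial pfaffian invariant. I will construct, for each $\AA_i$, a regular $\OO_X$-valued quadratic form $q_i$ of even rank and trivial discriminant such that $[\CliffAlg_0^+(q_i)] = [\AA_i]$, and then form the orthogonal sum $q = q_1 \perp \cdots \perp q_k$.

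Handle each generator by cases. If $\AA_i$ is split, take $q_i$ to be a hyperbolic form. If $\AA_i$ is an Azumaya quaternion algebra, take $q_i$ to be the standard reduced norm form $(\AA_i,\Nrd_{\AA_i},\OO_X)$; this is $\OO_X$-valued of rank $4$ and trivial discriminant, and $[\CliffAlg_0^+(q_i)] = [\AA_i]$ by Theorem~\ref{thm:low_dim}\ref{thm:low_dim.4} applied to the $\AA_i$-$\AA_i$-bimodule $\PP = \AA_i$. If $\AA_i$ has degree $4$ with $d_0(\AA_i) = 0 \in \Pic(X)/2$, choose any vector bundle $\PP$ of rank $16$ with $\AA_i \tensor \AA_i \isom \EEnd\PP$ and form the reduced pfaffian form $\Pf(\PP) = (A_\psi(\PP),\pf_\PP,\Pf_\PP)$. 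The hypothesis $[\Pf_\PP] = d_0(\AA_i) = 0$ means $\Pf_\PP \isom \NN^{\tensor 2}$ for some line bundle $\NN$, so choosing any such isomorphism gives a quadratic alignment $A = (\NN\dual,\phi)$ with $\phi : (\NN\dual)^{\tensor 2}\tensor\Pf_\PP \isomto \OO_X$. Setting $q_i = A^{\circlearrowleft}\Pf(\PP)$ produces an $\OO_X$-valued form, and Proposition~\ref{prop:properties}\ref{prop:properties.proj} together with Theorem~\ref{thm:low_dim}\ref{thm:low_dim.6} ensures $[\CliffAlg_0^+(q_i)] = [\AA_i]$.

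Having constructed all the $q_i$, the orthogonal sum $q = q_1 \perp \cdots \perp q_k$ is $\OO_X$-valued of even rank, with $\discs(q) = \sum_i \discs(q_i) = 0$ by Proposition~\ref{prop:disc_perp}, so $q \in I^2(X)$. Theorem~\ref{thm:Cliff_perp} yields $e_{\OO_X}^2(q) = \sum_i [\CliffAlg_0^+(q_i)] = \sum_i [\AA_i] = \alpha$, proving surjectivity. The final assertion is immediate from Proposition~\ref{prop:inv}: any Azumaya quaternion algebra admits the standard symplectic involution and so has trivial pfaffian invariant (and the reduced norm construction for quaternion algebras is already $\OO_X$-valued on the nose).

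The only genuinely new ingredient beyond the proof of Corollary~\ref{thm:main1} is the quadratic alignment normalization in the degree-$4$ case. This is precisely where the pfaffian invariant condition enters, and its triviality is exactly what lets us absorb the value line bundle $\Pf_\PP$ into a rescaling of $A_\psi(\PP)$ by $\NN\dual$. I expect this step to be the main conceptual (if not technical) obstacle; everything else reduces to the machinery already assembled in Sections~\ref{sec:line} and~\ref{sec:invariants}.
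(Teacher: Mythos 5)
Your proposal is correct and follows essentially the same route as the paper: for quaternion generators you use the $\OO_X$-valued reduced norm form, and for degree-$4$ generators you use the reduced pfaffian form, with the triviality of $d_0(\AA_i)=[\Pf_\PP]\in\Pic(X)/2$ allowing a quadratic alignment that rescales $\Pf(\PP)$ into its projective similarity class as an $\OO_X$-valued form, exactly as in the paper's proof of Corollary~\ref{thm:main2}. Your explicit orthogonal-sum bookkeeping via Proposition~\ref{prop:disc_perp} and Theorem~\ref{thm:Cliff_perp} is just an unwinding of the fact that $e^2_{\OO_X}$ is a homomorphism, which the paper uses implicitly.
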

\begin{proof}
We first remark that any $\AA \in \Aztwo_4(X)$ of index 2 is Brauer
equivalent to $\AA' \in \Aztwo_4(X)$ with trivial pfaffian invariant.
Indeed, if $\AA$ has index 2, then $\AA \isom \EEnd_{\BB}(\PP)$ for an
Azumaya quaternion algebra $\BB$ and a locally free $\BB$-module $\PP$
of rank 2.  
We can extend the standard symplectic involution on $\BB$ to $\AA' =
\MMat_2(\BB)$, which then has trivial pfaffian invariant by
Proposition \ref{prop:inv}.  But $\AA$ is Brauer equivalent to $\AA'$.

Now, note that the reduced norm form $q_{\BB} : \BB \to \OO_X$ is a
regular $\OO_X$-valued quadratic form in $I^2(X)$ with
$e^2_{\OO_X}(\NN(\BB)) = [\BB]$, by Theorem
\ref{thm:low_dim}\ref{thm:low_dim.4}.  This already proves the final
claim.  In general, if $\AA \in \Aztwo_4(X)$ has trivial pfaffian
invariant, then there exists an $\OO_X$-valued quadratic form
$(\EE,q)$ in the projective similarity class of $\Pf(\AA)$.  By
Theorem \ref{thm:low_dim}\ref{thm:low_dim.6}, we have that
$e^2_{\OO_X}(\EE,q) = [\AA]$. The first claim follows.
\end{proof}

\subsection{Brauer dimension results}

Now we investigate sufficient conditions under which $\Brtwo(X)$
is generated by Azumaya algebras of degree dividing 4.  Let $X$ be an
integral scheme with function field $K$.  An Azumaya $\OO_X$-algebra
$\AA$ is called an \linedef{Azumaya division algebra} if the generic
fiber $\AA_K$ is a central division $K$-algebra.  

We introduce two conditions on an integral scheme $X$
with function field $K$:
\begin{enumerate}\setlength{\itemsep}{2mm}
\item[\textit{A}] Every central division $K$-algebra of period 2 and
degree dividing $4$, which is Brauer equivalent to the generic fiber
of an Azumaya $\OO_X$-algebra, is isomorphic to the generic fiber of
an Azumaya division $\OO_X$-algebra, i.e., restriction to the generic point $\Aztwo_d(X) \to
\Aztwo_d(K)$ is surjective for $d$ dividing $4$. 

\item[\textit{B}] Every $\AA \in \Brtwo(X)$ satisfies $\ind(\AA_K)\, |\,
\per(\AA_K)^2$, i.e., $\ind(\AA_K)\, |\, 4$.
\end{enumerate}

Condition \textit{A} is a kind of ``purity for division algebras'' of
period 2 and degree dividing 4, or ``purity for
$\GL_4/\mu_2$-torsors'' in the setting of Colliot-Th\'el\`ene--Sansuc~\cite{colliot-thelene_sansuc:fibres_quadratiques}.  Condition \textit{B} might be
restated loosely as ``$X$ has Brauer dimension 2'' for classes of
period 2.  See~\cite[\S4]{ketura} for the precise notion of Brauer
dimension.

We now prove that under conditions \textit{A} and \textit{B}, we get
an ``unramified symbol length'' bound on the Brauer group, which is
stronger than the generation hypothesis needed for
Corollary~\ref{thm:main1}.

\begin{theorem}
\label{prop:main2}
Let $X$ be a regular integral scheme with 2 invertible.  If $X$
satisfies conditions \textit{A} and \textit{B}, then $\Brtwo(X)$ is
represented by Azumaya algebras of degree dividing~4.  In particular,
the total Clifford invariant is surjective.
\end{theorem}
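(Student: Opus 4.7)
The plan is to show directly that every class in $\Brtwo(X)$ is represented by an Azumaya algebra of degree dividing $4$, from which the surjectivity of the total Clifford invariant will follow from Corollary \ref{thm:main1}.

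Let $\alpha \in \Brtwo(X)$ and let $K$ denote the function field of $X$. The first step uses condition \textit{B} applied to $\alpha_K \in \Brtwo(K)$: its index divides $4$, so $\alpha_K = [D]$ for a central division $K$-algebra $D$, unique up to isomorphism, of period $2$ and degree $d$ dividing $4$. Fix any Azumaya $\OO_X$-algebra $\AA_0$ representing $\alpha$; then its generic fiber is Brauer equivalent to $D$ over $K$ by construction, so $D$ satisfies the hypothesis of condition \textit{A}.

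Next, condition \textit{A} produces an Azumaya division $\OO_X$-algebra $\AA$ with $\AA_K \isom D$, so $\deg \AA = d$ divides $4$. It remains only to verify that $[\AA] = \alpha$ in $\Br(X)$. For this I would invoke the injectivity of the restriction map $\Br(X) \hookrightarrow \Br(K)$ for $X$ a regular integral scheme (Auslander--Goldman in low dimension, Gabber in general): since both $[\AA]$ and $\alpha$ have the same image $[D]$ in $\Br(K)$, they agree on $X$. This exhibits $\alpha$ as the Brauer class of an Azumaya algebra of degree dividing $4$, and Corollary \ref{thm:main1} then yields surjectivity of $e^2 : \Itot^2(X) \to \Brtwo(X)$.

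There is no genuinely hard step in this argument; the proof is essentially a direct assembly of hypotheses \textit{A} and \textit{B} together with the Auslander--Goldman--Gabber injectivity. The substantive content of the theorem lies elsewhere: in the verification of conditions \textit{A} and \textit{B} in concrete cases (using the purity result and the Brauer dimension bounds of Saltman and Lieblich) and in the low-rank exceptional isomorphism theory (Theorem \ref{thm:low_dim}) that powers Corollary \ref{thm:main1}.
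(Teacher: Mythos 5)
Your proposal is correct and follows essentially the same route as the paper: apply condition \textit{B} to bound the generic index, condition \textit{A} to extend the division algebra to an Azumaya algebra of degree dividing $4$, use injectivity of $\Br(X)\to\Br(K)$ for regular integral $X$ to identify the classes, and conclude via Corollary~\ref{thm:main1}. Your extra care in noting that the generic class comes from an Azumaya algebra on $X$ (so the hypothesis of condition \textit{A} applies) is implicit in the paper's argument as well.
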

\begin{proof}
Since $X$ is regular, the canonical map $\Br(X) \to \Br(K)$ is
injective, see~\cite{auslander_goldman:brauer_group_commutative_ring}
or \cite[Cor.~1.8]{grothendieck:Brauer_II}.  By condition \textit{B},
for any $\AA \in \Brtwo(X)$, we have that $\AA_K \in \Brtwo(K)$ is
Brauer equivalent to a central division $K$-algebra $D$ of degree
dividing 4.  By condition \textit{A}, there exists an Azumaya
$\OO_X$-algebra $\BB$ whose generic fiber is $D$, in particular, $\BB$
has degree dividing $4$.  Since $[\BB_K] = [D] = [\AA_K] \in
\Brtwo(K)$, by the injectivity of $\Br(X) \to \Br(K)$, we have that
$[\BB] = [\AA] \in \Brtwo(X)$.   
The final claim is thus a direct consequence of Corollary~\ref{thm:main1}.
\end{proof}

We now collect together some necessary conditions under which
conditions \textit{A} and \textit{B} hold.  Condition \textit{A} (and
more generally, purity for division algebras of any degree) is
satisfied quite generally for schemes of dimension $\leq 2$.

\begin{theorem}
\label{thm:purity}
Any regular integral scheme $X$ of dimension $\leq 2$ satisfies
condition \textit{A}.
\end{theorem}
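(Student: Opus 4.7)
The plan is to cut an arbitrary Azumaya representative of the Brauer class down to one of the correct division degree by taking the endomorphism ring of a generically simple reflexive right module. Let $\AA$ be an Azumaya $\OO_X$-algebra with $[\AA_K]=[D]$, where $D$ is a central division $K$-algebra of period $2$ and degree $d\mid 4$. By Artin--Wedderburn, $\AA_K\isom\MMat_r(D)$ with $\deg\AA=dr$, and the (up to isomorphism) unique simple right $\AA_K$-module $V$ satisfies $\End_{\AA_K}(V)^{\op}\isom D$.

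First, I would produce a coherent, $\OO_X$-reflexive right $\AA$-module $\FF$ on $X$ with $\FF_K=V$. Choose a finitely generated right $\AA$-submodule $\FF_0\subset V$ of the constant sheaf with $\FF_{0,K}=V$ (for instance, multiply a $K$-generating set of $V$ by a common denominator and apply $\AA$), and replace $\FF_0$ by its $\OO_X$-bidual $\FF:=\FF_0^{\vee\vee}$. Since $X$ is regular integral, $\FF$ is again a torsion-free coherent subsheaf of $V$, is reflexive, and inherits a right $\AA$-action by the functoriality of biduality. Set $\BB:=\HHom_\AA(\FF,\FF)^{\op}$: this is a coherent $\OO_X$-algebra whose generic fiber is $\BB_K\isom\End_{\AA_K}(V)^{\op}\isom D$, a central division $K$-algebra.

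The main obstacle is to show that $\BB$ is an Azumaya $\OO_X$-algebra (necessarily of degree $d$). This is Zariski-local, so at each $x\in X$ I would work over $R:=\OO_{X,x}$, a regular local ring of Krull dimension $\leq 2$ with fraction field $K$, and invoke the Auslander--Goldman theorem~\cite{auslander_goldman:brauer_group_commutative_ring}: over such an $R$, a central simple $K$-algebra extends to an Azumaya $R$-algebra if and only if its class lies in $\Br(R)\subset\Br(K)$, and in that case every maximal $R$-order is Azumaya (and all such are isomorphic). In our setting $[D]=[\AA_R]\in\Br(R)$, so an Azumaya extension of $D$ to $R$ exists; it remains to identify $\BB_R=\End_{\AA_R}(\FF_R)^{\op}$ with this unique extension. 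For this, I would use Morita theory for the Azumaya algebra $\AA_R$: the reflexive $\AA_R$-lattice $\FF_R$, whose generic fiber is the simple module $V$, corresponds under Morita equivalence to an invertible reflexive lattice in $D$, whose endomorphism ring is the maximal (hence Azumaya) $R$-order in $D$. The hypothesis $\dim X\leq 2$ enters here and only here, through Auslander--Goldman, which fails in higher dimension; by contrast, the restriction $d\mid 4$ plays no role, so the argument in fact proves purity for division algebras in every degree and period.
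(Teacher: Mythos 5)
Your overall route is genuinely different from the paper's: the paper proves this statement in one stroke by applying the purity theorem of Colliot-Th\'el\`ene--Sansuc \cite[Cor.~6.14]{colliot-thelene_sansuc:fibres_quadratiques} for torsors under the reductive group scheme $\GL_4/\muu_2$ over a regular scheme of dimension $\leq 2$, mentioning the order-theoretic argument of \cite[Thm.~4.3]{auel_parimala_suresh:degquadsurface} only as an alternative. Your plan --- extend the simple generic module to a reflexive right $\AA$-module $\FF$ and take $\BB=\EEnd_\AA(\FF)\op$ --- is that classical alternative, and its soft parts (existence of $\FF_0$, passage to the bidual, coherence of $\BB$, identification of $\BB_K$ with $D$, reduction of the Azumaya property to stalks) are fine; as you observe, it is insensitive to the restriction $d\mid 4$.

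However, the decisive step --- that $\BB_x$ is Azumaya over $R=\OO_{X,x}$ --- is asserted rather than proved, and the mechanism you propose does not work as written. There is no Morita equivalence between right $\AA_R$-modules and ``lattices in $D$'' until you already possess an $R$-order in $D$ Morita equivalent to $\AA_R$, which is essentially the conclusion; and even if you first invoke Auslander--Goldman to produce an Azumaya maximal order $\Delta\subset D$ (legitimate: $\Br(R)\to\Br(K)$ is injective for $R$ regular, so $[\AA_R]=[\Delta]$ and the two algebras are Morita equivalent over $R$), the transported module is only known to be a reflexive $\Delta$-lattice with generic fibre $D$; your claims that it is ``invertible'' and that its endomorphism ring is ``the maximal (hence Azumaya) $R$-order'' are precisely what must be proved, and no argument is offered (nor is the parenthetical uniqueness of maximal orders needed, or obvious, over a non-complete $R$). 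The standard way to close this gap uses the dimension hypothesis at a point you never mention: over a regular local ring of dimension $\leq 2$ a finitely generated reflexive module is free (it has depth $\geq\min(2,\dim R)$, so Auslander--Buchsbaum applies), hence $\FF_x$ is $R$-free, hence projective over the separable algebra $\AA_R$, and an \'etale-local computation after splitting $\AA_R$ then shows that $\End_{\AA_R}(\FF_x)$ is Azumaya, necessarily of degree $d$ by its generic fibre. With that inserted your argument is complete --- and it shows that $\dim X\leq 2$ enters through the freeness of reflexive modules rather than ``only through Auslander--Goldman''; indeed, once this is in place the maximal-order theorem of \cite{auslander_goldman:brauer_group_commutative_ring} is not needed at all.
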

\begin{proof}
Apply
Colliot-Th\'el\`ene--Sansuc~\cite[Cor.~6.14]{colliot-thelene_sansuc:fibres_quadratiques}
to the reductive group scheme $\GL_4/\mu_2$ over $X$.  
An alternate proof can be found
in~\cite[Thm.~4.3]{auel_parimala_suresh:degquadsurface}.
\end{proof}

Note that for schemes of higher dimension, Condition \textit{A} can
fail, see \cite{antieau_williams:orders}

As for condition \textit{B}, it holds in the following cases where the
Brauer dimension of $K$ is known to be 1:
\begin{itemize}
\item smooth curves over finite fields (by class field theory),
\item smooth surfaces over algebraically closed fields (by
Artin~\cite{artin:Brauer-Severi} or de Jong~\cite{dejong:surfaces}),
\end{itemize}
and where the Brauer dimension of $K$ is known to be 2:
\begin{itemize}
\item smooth curves over local fields (by Saltman~\cite{saltman:division_algebra_p-adic_curves}),
\item smooth surfaces over (pseudo-)finite fields (by Lieblich~\cite{lieblich:transcendence_2}).
\end{itemize}
We can now proceed to prove Theorem~\ref{thm:A}.

\begin{cor}
\label{cor:main_result}
Let $X$ be regular integral scheme with 2 invertible. 
\begin{enumerate}
\item \label{cor:main_result.1} If $X$ is a smooth curve over a finite field or surface over an
algebraically closed field, then the classical Clifford invariant $e^2
: I^2(X) \to \Brtwo(X)$ is surjective.

\item \label{cor:main_result.2} If $X$ is a smooth curve over a local field or a surface over a
(pseudo-)finite field, then the total Clifford invariant $e^2 :
\Itot^2(X) \to \Brtwo(X)$ is surjective.
\end{enumerate}
\end{cor}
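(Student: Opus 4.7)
The plan is to deduce both parts from the already-established machinery: Theorem~\ref{prop:main2} (surjectivity of the total Clifford invariant under conditions \textit{A} and \textit{B}), Corollary~\ref{thm:main2} (surjectivity of the classical Clifford invariant under a quaternionic generation hypothesis), the purity result Theorem~\ref{thm:purity}, and the Brauer dimension bounds of Artin/de~Jong, Saltman, and Lieblich recalled immediately before the corollary. In each case, the role of the base field manifests itself only through which Brauer dimension statement we invoke; the scheme-theoretic input is the same.

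For part~\ref{cor:main_result.2}, $X$ is a regular integral scheme of dimension $\leq 2$, so Theorem~\ref{thm:purity} gives condition~\textit{A} automatically. For condition~\textit{B}, one invokes Saltman's theorem \cite{saltman:division_algebra_p-adic_curves} when $X$ is a smooth curve over a local field, and Lieblich's theorem \cite{lieblich:transcendence_2} when $X$ is a smooth surface over a (pseudo-)finite field; each says that the Brauer dimension of the function field $K$ is at most $2$, so $\ind(\AA_K) \mid \per(\AA_K)^2$ for any $\AA \in \Brtwo(X)$. Theorem~\ref{prop:main2} then yields surjectivity of $e^2 : \Itot^2(X) \to \Brtwo(X)$.

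For part~\ref{cor:main_result.1}, the input is stronger: when $X$ is a smooth curve over a finite field (class field theory) or a smooth surface over an algebraically closed field (Artin \cite{artin:Brauer-Severi}, de~Jong \cite{dejong:surfaces}), the function field $K$ has Brauer dimension $1$, so every class in $\Brtwo(K)$ is represented by a quaternion algebra. Combining this with purity (Theorem~\ref{thm:purity}) applied in degree $2$, every class of $\Brtwo(X)$ is represented by an Azumaya quaternion algebra on $X$, so the hypothesis of Corollary~\ref{thm:main2} holds and the classical Clifford invariant $e^2 : I^2(X) \to \Brtwo(X)$ is surjective.

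I do not expect significant obstacles here: the argument is essentially bookkeeping, matching each geometric hypothesis with the appropriate Brauer dimension theorem. The one point to handle carefully is the reduction from ``class in $\Brtwo(K)$ is Brauer equivalent to a division algebra of degree dividing $2$ (resp.~$4$)'' to ``class in $\Brtwo(X)$ is represented by an Azumaya algebra of degree dividing $2$ (resp.~$4$) on $X$''; this is precisely condition~\textit{A}, supplied by Theorem~\ref{thm:purity}, and the injectivity of $\Br(X) \to \Br(K)$ used inside the proof of Theorem~\ref{prop:main2}. With that in place, part~\ref{cor:main_result.2} is a direct application of Theorem~\ref{prop:main2} and part~\ref{cor:main_result.1} is a direct application of Corollary~\ref{thm:main2}.
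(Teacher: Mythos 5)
Your proposal is correct and follows essentially the same route as the paper, which proves the corollary as a direct consequence of Corollaries~\ref{thm:main1} and \ref{thm:main2}, Theorem~\ref{prop:main2}, Theorem~\ref{thm:purity}, and the cited Brauer dimension results; you have simply spelled out the bookkeeping, including the correct observation that condition~\textit{A} (which covers degree $2$ as a divisor of $4$) together with injectivity of $\Br(X) \to \Br(K)$ upgrades quaternionic representability over $K$ to quaternionic representability over $X$ for part~\ref{cor:main_result.1}.
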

\begin{proof}
This is a direct consequence of Corollaries \ref{thm:main1} and
\ref{thm:main2}, Theorem~\ref{prop:main2}, Theorem \ref{thm:purity}, and the
Brauer dimension results stated above.  Note that \ref{cor:main_result.1} was
already known for curves over finite fields by~\cite[Lemma~4.1]{parimala_sridharan:graded_Witt} and for surfaces over
algebraically closed fields by
\cite{fernandez-carmena:Witt_group_surfaces}.
\end{proof}

We remark that recent results of
Lieblich--Parimala--Suresh~\cite{lieblich_parimala_suresh:u-invariant}
imply that, assuming a conjecture of Colliot-Th\'el\`ene on the
Brauer--Manin obstruction to the existence of 0-cycles of degree 1 on
smooth projective varieties over global fields, condition \textit{B}
also holds for regular arithmetic surfaces, i.e., regular schemes
proper and flat over the spectrum of the ring of integers of a number
field whose generic fiber is a geometrically connected curve.  Thus
Theorem~\ref{thm:A} holds conditionally for regular arithmetic
surfaces. Also, recent results of Harbater--Hartmann--Krashen
\cite{HHK:refinements} prove condition \textit{B} for a wide class of
local curves over complete discrete valuation rings with finite or
algebraically closed residue fields.

\subsection{A total unramified Milnor question}

We are lead to the following natural question, inspired by our main result.

\begin{question}
\label{total_question}
Let $X$ be a regular integral scheme with 2 invertible.  Assume that
the function field $K$ of $X$ satisfies $\cd_2(K) \leq 3$.  Is the
homomorphism
$$
e^2 : \Itot^2(X) \to \Brtwo(X)
$$
surjective?
\end{question}

A positive answer to Question \ref{total_question} brings a scheme
closer to having a positive answer to an analogue of the unramified
Milnor question for the fundamental filtration $\Itot^2(X) \subset
\Itot^1(X) \subset \Wtot(X)$ of the total Witt group; see
\cite[Question~3.1]{auel:kyoto} for a survey of results on the
unramified Milnor question.   All schemes
appearing in Corollary~\ref{cor:main_result} have a positive answer to
Question \ref{total_question}.

There are recent examples of
Antieau--Williams~\cite[\S7]{antieau_williams:topological_period-index_6-complexes},~\cite[Example~3.13]{antieau_williams:topology_purity_torsors}
of smooth affine schemes over $\C$ of dimension 5 with nonsurjective total Clifford
invariant (these examples actually have nonsurjective classical
Clifford invariant and trivial Picard group).

\providecommand{\href}[2]{#2}


\begin{thebibliography}{HHK14}


\bibitem[AW1]{antieau_williams:topology_purity_torsors}
B.~Antieau and B.~Williams, 
\emph{Topology and purity for torsors},
Documenta Math.\ \textbf{20} (2015), 333--355.

\bibitem[AW2]{antieau_williams:topological_period-index_6-complexes}
\bysame,
\emph{The topological period-index problem over 6-complexes}, 
J.\ Top.\ \textbf{7} (2014), 617--640.

\bibitem[AW3]{antieau_williams:orders}
\bysame, 
\emph{Unramified division algebras do not always contain Azumaya maximal
orders}, 
Invent.\ Math.\ \textbf{197} (2014) 47--56.

\bibitem[Ara]{arason:proof_Merkurjev_theorem}
J.~K.~Arason, \emph{A proof of {M}erkurjev's theorem}, Quadratic and
  {H}ermitian forms ({H}amilton, {O}nt., 1983), CMS Conf. Proc., vol.~4, Amer.
  Math. Soc., Providence, RI, 1984, pp.~121--130.

\bibitem[Art]{artin:Brauer-Severi}
M.~Artin, \emph{Brauer-{S}everi varieties}, Brauer groups in ring theory and
  algebraic geometry ({W}ilrijk, 1981), Lecture Notes in Math., vol. 917,
  Springer, Berlin, 1982, pp.~194--210.

\bibitem[AG]{auslander_goldman:brauer_group_commutative_ring}
M.~Auslander and O.~Goldman, \emph{The {B}rauer group of a commutative
  ring}, Trans. Amer. Math. Soc. \textbf{97} (1960), 367--409.


\bibitem[Auel1]{auel:clifford}
A.\ Auel, \emph{Clifford invariants of line bundle-valued quadratic forms}, MPIM
  preprint series 2011-33, 2011.

\bibitem[Auel2]{auel:euler_four}
\bysame,
\emph{Vector bundles of rank 4 and $\mathsf{A}_3=\mathsf{D}_3$},
Int.\ Math.\ Res.\ Notices \textbf{15} (2013), 3450--3476.

\bibitem[Auel3]{auel:kyoto}
\bysame,
\emph{Remarks on the Milnor conjecture over schemes},
Galois-Teichm{\"u}ller Theory and Arithmetic Geometry (Kyoto, 2010),
H.\ Nakamura, F.\ Pop, L.\ Schneps, A.\ Tamagawa eds.,
Adv.\ Stud.\ Pure Math., vol.\ 63, 2012, pp.\ 1-30. 

\bibitem[ABGV]{ketura}
A.~Auel, E.~Brussel, S.~Garibaldi, and U.~Vishne, \emph{Open problems
  in central simple algebras}, Transform.\ Groups \textbf{16} (2011), no.~1,
  219--264.

\bibitem[APS]{auel_parimala_suresh:degquadsurface}
A.~Auel, R.\ Parimala, and V.\ Suresh, \emph{Quadric surface bundles over
  surfaces}, Documenta Math., to appear.

\bibitem[Bal]{balaji_ternary}
V.~Balaji, \emph{Line-bundle valued ternary
  quadratic forms over schemes}, J. Pure Appl. Algebra \textbf{208} (2007),
  237--259.

\bibitem[BC]{balmer_calmes:lax}
P.~Balmer and B.~Calm{\`{e}}s, \emph{Bases of total {W}itt groups and
  lax-similitude}, J.\ Algebra Appl.\ \textbf{11} (2012), no.~03.

\bibitem[Bic]{bichsel:thesis}
W.\ Bichsel, \emph{Quadratische raeume mit werten in invertierbaren moduln},
  Ph.D. thesis, ETH Z{\"{u}}rich, 1985.

\bibitem[BK]{bichsel_knus:values_line_bundles}
W.\ Bichsel and M.-A.\ Knus, \emph{Quadratic forms with values in line bundles},
  Contemp.\ Math. \textbf{155} (1994), 293--306.


\bibitem[CTS2]{colliot-thelene_sansuc:fibres_quadratiques}
\bysame, \emph{Fibr\'es quadratiques et
  composantes connexes r\'eelles}, Math. Ann. \textbf{244} (1979), no.~2,
  105--134.

\bibitem[CvO]{caenepeel_van_oystaeyen}
S.~Caenepeel and F.\ van Oystaeyen, \emph{Quadratic forms with values
  in invertible modules}, {K}-{T}heory \textbf{7} (1993), 23--40.

\bibitem[DG]{demazure_gabriel}
M.~Demazure and P.~Gabriel, \emph{Groupes alg{\'{e}}briques. {T}ome
  {I}: G{\'{e}}om{\'{e}}trie alg{\'{e}}brique, g{\'{e}}n{\'{e}}ralit{\'{e}}s,
  groupes commutatifs. {A}vec un appendice, \emph{{C}orps de classes local},
  par {M}ichiel {H}azewinkel}, Masson {\&} Cie, {\'{E}}diteur, Paris;
  North-Holland Publishing Company, Amsterdam, 1970.

\bibitem[SGA3]{SGA3}
M.\ Demazure and A.\ Grothendieck, \emph{Sch\'emas en groupes. {I}, {II},
  {III}}, S\'eminaire de G\'eom\'etrie Alg\'ebrique du Bois Marie 1962/64 (SGA
  3), vol. 151, 152, 153, Springer-Verlag, New York, 1970.

\bibitem[dJ]{dejong:surfaces}
A.~J.\ de~Jong, \emph{The period-index problem for the {B}rauer group of an
  algebraic surface}, Duke Math. J. \textbf{123} (2004), no.~1, 71--94.

\bibitem[EKM]{elman_karpenko_merkurjev}
R.\ Elman, N.\ Karpenko, and A.\ Merkurjev, \emph{The algebraic
  and geometric theory of quadratic forms}, American Mathematical Society
  Colloquium Publications, vol.~56, American Mathematical Society, Providence,
  RI, 2008.

\bibitem[EVMS]{elbaz-vincent_muller-stach}
P.\ Elbaz-Vincent and S.\ M{\"u}ller-Stach, \emph{Milnor {$K$}-theory
  of rings, higher {C}how groups and applications}, Invent. Math. \textbf{148}
  (2002), no.~1, 177--206.

\bibitem[FC]{fernandez-carmena:Witt_group_surfaces}
F.\ Fern{\'a}ndez-Carmena, \emph{On the injectivity of the map of the
  {W}itt group of a scheme into the {W}itt group of its function field}, Math.
  Ann. \textbf{277} (1987), no.~3, 453--468.

\bibitem[GHKS]{geyer_harder_knebusch_scharlau}
W.-D. Geyer, G.~Harder, M.~Knebusch, and W.~Scharlau, \emph{{E}in
  {R}esiduensatz f{\"{u}}r symmetrische {B}ilinearformen}, Invent. Math.
  \textbf{11} (1970), 319--328.

\bibitem[Gil]{gille:graded_Gersten-Witt}
S.\ Gille, \emph{A graded {G}ersten-{W}itt complex for schemes with a
  dualizing complex and the {C}how group}, J. Pure Appl. Algebra \textbf{208}
  (2007), no.~2, 391--419.

\bibitem[Gro]{grothendieck:Brauer_II}
A.\ Grothendieck, \emph{Le groupe de {B}rauer. {II}. {T}h\'eorie
  cohomologique}, Dix {E}xpos\'es sur la {C}ohomologie des {S}ch\'emas,
  North-Holland, Amsterdam, 1968, pp.~67--87.

\bibitem[Gui]{guin:homologie_GL_Milnor_K-theory}
D.\ Guin, \emph{Homologie du groupe lin\'eaire et {$K$}-th\'eorie de
  {M}ilnor des anneaux}, J. Algebra \textbf{123} (1989), no.~1, 27--59.

\bibitem[HHK14]{HHK:refinements}
D.\ Harbater, J.\ Hartmann, D.\ Krashen,
\emph{Refinements to patching and applications to field invariants},
Int.\ Math.\ Res.\ Notices (2015), imrn/rnu278.

\bibitem[Hoo]{hoobler:Merkurjev-Suslin_semilocal_ring}
R.\ Hoobler, \emph{The {M}erkuriev-{S}uslin theorem for any semi-local
  ring}, J. Pure Appl. Algebra \textbf{207} (2006), no.~3, 537--552.

\bibitem[Kan]{kanzaki:bilinear_module}
T.\ Kanzaki, \emph{On bilinear module and {W}itt ring over a commutative
  ring}, Osaka J. Math. \textbf{8} (1971), 485--496.

\bibitem[Kap]{kapranov:derived}
M.\ Kapranov, \emph{On the derived categories of coherent sheaves on some
  homogeneous spaces}, Invent.\ {M}ath. \textbf{92} (1988), 479--508.

\bibitem[KS]{kashiwara_schapira:categories_sheaves}
M.\ Kashiwara and P.\ Schapira, \emph{Categories and sheaves},
  Grundlehren der Mathematischen Wissenschaften [Fundamental Principles of
  Mathematical Sciences], vol. 332, Springer-Verlag, Berlin, 2006.

\bibitem[Ker]{kerz:Gersten_conjecture_Milnor_K-theory}
M.\ Kerz, \emph{The {G}ersten conjecture for {M}ilnor {$K$}-theory}, Invent.
  Math. \textbf{175} (2009), no.~1, 1--33.

\bibitem[KMS]{kerz:Milnor-Chow_homomorphism}
M.\ Kerz and S.\ M{\"u}ller-Stach, \emph{The {M}ilnor-{C}how homomorphism
  revisited}, $K$-Theory \textbf{38} (2007), no.~1, 49--58.

\bibitem[Kneb2]{knebusch}
\bysame, \emph{Symmetric bilinear forms over algebraic varieties},
  {C}onference on {Q}uadratic {F}orms--1976 ({K}ingston, {O}nt.) (G.\ Orzech,
  ed.), {Q}ueen's {P}apers in {P}ure and {A}ppl.\ {M}ath., no.~46, {Q}ueen's
  Univ., 1977, pp.~103--283.

\bibitem[Knes]{kneser:composition_binary}
M.\ Kneser, \emph{Composition of binary quadratic forms}, J. Number Theory
  \textbf{15} (1982), no.~3, 406--413.

\bibitem[Knu1]{knus:pfaffians_and_quadratic_forms}
M.-A.\ Knus, \emph{{P}faffians and quadratic forms}, Adv. in Math. \textbf{71}
  (1988), 1--20.

\bibitem[Knu2]{knus:quadratic_hermitian_forms}
\bysame, \emph{Quadratic and hermitian forms over rings},
  {S}pringer-{V}erlag, {B}erlin, 1991.

\bibitem[KMRT]{book_of_involutions}
M.-A.\ Knus, A.\ Merkurjev, M.\ Rost, and J.-P.\ Tignol,
  \emph{The book of involutions}, Colloquium Publications, vol.~44, AMS, 1998.

\bibitem[KO]{knus_ojanguren:metabolic}
M.-A.\ Knus and M.\ Ojanguren, \emph{The {C}lifford algebra of a
  metabolic space}, Arch. Math. (Basel) \textbf{56} (1991), no.~5, 440--445.

\bibitem[KOS]{knus_ojanguren_sridharan:quadratic_azumaya}
M.-A.\ Knus, M.~Ojanguren, and R.~Sridharan, \emph{Quadratic forms and {A}zumaya
  algebras}, J. Reine Angew. Math. \textbf{303/304} (1978), 231--248.

\bibitem[KP]{knus_paques:rank_4}
M.-A.\ Knus and A.~Paques, \emph{Quadratic spaces with trivial {A}rf invariant},
  J. Algebra \textbf{93} (1985), no.~2, 267--291.

\bibitem[KPS1]{knus_parimala_sridharan:rank_4}
M.-A.\ Knus, R.~Parimala, and R.~Sridharan, \emph{On rank {$4$} quadratic spaces
  with given {A}rf and {W}itt invariants}, Math. Ann. \textbf{274} (1986),
  no.~2, 181--198.

\bibitem[KPS2]{knus_parimala_sridharan:rank_6_via_pfaffians}
\bysame, \emph{A classification of rank 6
  quadratic spaces via pfaffians}, J. reine angew. Math. \textbf{398} (1989),
  187--218.

\bibitem[Kuz]{kuznetsov:quadrics}
A.~Kuznetsov, 
\emph{Derived categories of quadric fibrations and intersections of quadrics},
Adv. Math. \textbf{218} (2008), no.\ 5, 1340--1369.

\bibitem[Lie1]{lieblich:thesis}
M.\ Lieblich, \emph{Moduli of twisted sheaves and generalized {A}zumaya
  algebras}, Ph.D. thesis, Massachusetts Institute of Technology, Cambridge,
  MA, June 2004.

\bibitem[Lie2]{lieblich:transcendence_2}
\bysame, \emph{The period-index problem for fields of transcendence degree
  2}, Ann. of Math., to appear.

\bibitem[LPS]{lieblich_parimala_suresh:u-invariant}
M.\ Lieblich, R.\ Parimala, and V.\ Suresh,
\emph{Colliot-Th\'el\`ene's conjecture and finiteness of
$u$-invariants}, Math.\ Ann.\ \textbf{360} (2014), no.\ 1-2, 1--22. 

\bibitem[Mah]{mahmoudi:Clifford_algebras}
M.\ Mahmoudi, \emph{Orthogonal symmetries and {C}lifford algebras}, Proc.
  Indian Acad. Sci. Math. Sci. \textbf{120} (2010), no.~5, 535--561.

\bibitem[Mer1]{merkurjev:degree_2}
A.\ Merkurjev, \emph{On the norm residue symbol of degree {$2$}}, Dokl.
  Akad. Nauk SSSR \textbf{261} (1981), no.~3, 542--547.

\bibitem[Mer2]{merkurjev:another_proof_norm_2}
\bysame, \emph{On the norm residue homomorphism of degree two},
  Proceedings of the {S}t. {P}etersburg {M}athematical {S}ociety. {V}ol. {XII}
  (Providence, RI), Amer. Math. Soc. Transl. Ser. 2, vol. 219, Amer. Math.
  Soc., 2006, pp.~103--124.

\bibitem[Mor]{morel:proof_milnor}
F.\ Morel, \emph{Milnor's conjecture on quadratic forms and mod 2 motivic
  complexes}, Rend. Sem. Mat. Univ. Padova \textbf{114} (2005), 63--101 (2006).

\bibitem[Mum]{mumford:theta_characteristics}
D.\ Mumford, \emph{Theta characteristics of an algebraic curve}, Ann. Sci.
  {\'E}cole Norm. Sup. \textbf{4} (1971), no.~4, 181--192.

\bibitem[PSch]{parimala_scharlau:extension}
R.~Parimala and W.~Scharlau, \emph{On the canonical class of a curve and the
  extension property for quadratic forms}, Recent advances in real algebraic
  geometry and quadratic forms ({B}erkeley, {CA}, 1990/1991; {S}an {F}rancisco,
  {CA}, 1991), Contemp. Math., vol. 155, Amer. Math. Soc., Providence, RI,
  1994, pp.~339--350.

\bibitem[PS1]{parimala_sridharan:graded_Witt}
R.~Parimala and R.~Sridharan, \emph{Graded {W}itt ring and unramified
  cohomology}, $K$-Theory \textbf{6} (1992), no.~1, 29--44.

\bibitem[PS1]{parimala_sridharan:nonsurjectivity}
\bysame, \emph{Nonsurjectivity of the {C}lifford invariant
  map}, Proc. Indian Acad. Sci. Math. Sci. \textbf{104} (1994), no.~1, 49--56,
  K. G. Ramanathan memorial issue.

\bibitem[PS2]{parimala_sridharan:norms_and_pfaffians}
\bysame, \emph{Reduced norms and pfaffians via
  {B}rauer-{S}everi schemes}, Contemp.\ Math. \textbf{155} (1994), 351--363.

\bibitem[Sal1]{saltman:azumaya_algebras_with_involution}
D.\ Saltman, \emph{{A}zumaya algebras with involution}, J. Algebra
  \textbf{52} (1978), no.~2, 526--539.

\bibitem[Sal2]{saltman:division_algebra_p-adic_curves}
\bysame, \emph{Division algebras over {$p$}-adic curves}, J. Ramanujan
  Math. Soc. \textbf{12} (1997), 25--47, errata, {J}. {R}amanujan {M}ath.
  {S}oc. {\bf 12} (1997), no. 1, 25--47.

\bibitem[Voi]{voight:characterizing_quaternion}
J.\ Voight, \emph{Characterizing quaternion rings over an arbitrary base}, J.
  Reine Angew. Math. \textbf{657} (2011), 113--134.

\bibitem[Wad]{wadsworth:proof_Merkurjev_theorem}
A.\ Wadsworth, \emph{Merkurjev's elementary proof of {M}erkurjev's
  theorem}, Applications of algebraic {$K$}-theory to algebraic geometry and
  number theory, {P}art {I}, {II} ({B}oulder, {C}olo., 1983), Contemp. Math.,
  vol.~55, Amer. Math. Soc., Providence, RI, 1986, pp.~741--776.

\end{thebibliography}
\end{document}